        \newenvironment{algof}%
{\algorithm}%
   {\endalgorithm}
\title{Adaptive Morley FEM for the von K\'{a}rm\'{a}n  \\
equations with optimal convergence rates}
\author{Carsten Carstensen\footnote{Department of Mathematics, 
Humboldt-Universit\"{a}t zu Berlin, 10099 Berlin, Germany.
Distinguished Visiting Professor, Department of Mathematics, Indian institute of 
Technology Bombay, Powai, Mumbai-400076. Email. cc@math.hu-berlin.de}        
\quad\text{and}\quad 
Neela Nataraj\footnote{Department of Mathematics, Indian Institute of Technology Bombay, Powai, Mumbai 400076, India. Email. neela@math.iitb.ac.in}
}
\tikzstyle{every picture}+=[font=\footnotesize]
\setlist{noitemsep, topsep=0.8ex, partopsep=0pt
	, leftmargin=3em}
\setlist[1]{labelindent=\parindent}
\newlist{axioms}{enumerate}{1}
\setlist[axioms]{font=\bfseries}
\newlist{alphenum}{enumerate}{1}
\setlist[alphenum]{label=\textbf{(\alph*)}, leftmargin=4em}
\newlist{alphienum}{enumerate}{1}
\setlist[alphienum]{label=\textit{(\alph*)}}
\newlist{romanenum}{enumerate}{1}
\setlist[romanenum]{label=\textit{(\roman*)}}
\newlist{romaninenum}{enumerate*}{1}
\setlist[romaninenum]{label=\textit{(\roman*)}}
\crefname{equation}{\unskip}{\unskip}
\newtheorem{thm}{Theorem}[section]
\newtheorem{cor}[thm]{Corollary}
\newtheorem{lem}[thm]{Lemma}
\theoremstyle{definition}
\theoremstyle{remark}
\newtheorem{rem}{Remark}[section]
\numberwithin{equation}{section}
\newcommand{\bN}{\mathbb N}
\newcommand{\bR}{\mathbb R}
\newcommand{\bT}{\mathbb T}
\newcommand{\bV}{\mathcal V}
\newcommand{\mM}{\mathcal M}
\newcommand{\cE}{\mathcal E}
\newcommand{\cV}{\mathcal V}
\newcommand{\cT}{\mathcal T}
\newcommand{\cN}{\mathcal N}
\newcommand{\cM}{\mathcal M}
\newcommand{\pw}{\rm {pw}}
\newcommand{\Lstab}[0]{\ensuremath{\Lambda_{\mathrm{1}}}}
\newcommand{\Lred}[0]{\ensuremath{\Lambda_{\mathrm{2}}}}
\newcommand{\Lqo}[0]{\ensuremath{\Lambda_{\mathrm{4}}}}
\newcommand{\LdRel}[0]{\ensuremath{\Lambda_{\mathrm{3}}}}
\newcommand{\abs}[1]{\left\lvert #1 \right\rvert}
\newcommand{\bH}{\boldsymbol{H}}
\newcommand{\hto}{H^2_0(\Omega)}
\newcommand{\hT}{\widehat{\mathcal{T}}}
\newcommand{\NC}{\text{pw}}
\newcommand{\vket}{von K\'{a}rm\'{a}n equations }
\newcommand{\integ}{\int_\Omega}
\newcommand{\sit}{\sum_{K \in\mathcal{T}}\int_K}
\newcommand{\ma}{{\rm max}}
\newcommand{\fl}{\;\text{ for all }}
\newcommand{\half}{\frac{1}{2}}
\newcommand{\trinl}{\ensuremath{|\!|\!|}}
\newcommand{\trinr}{\ensuremath{|\!|\!|}}
\newcommand{\dx}{{\rm\,dx}}
\newcommand{\ds}{{\rm\,ds}}
\newcommand{\cof}{{\rm cof}}
\newcommand{\wv}{{\widehat{v}}}
\newcommand{\TT}[1]{\mathbb T\left(#1\right)}
\newcommand{\TO}{\mathbb T}
\DeclareMathOperator{\E}{\mathcal{E}}
\newcommand{\M}{\text{M}}
\newcommand{\Tl}[1]{\mathcal T_{#1}}
\newcommand{\Ml}[1]{\mathcal M_{#1}}
\newcommand{\T}{\mathcal{T}}
\newcommand{\NO}[0]{\mathbb{N}_0} 
\begin{document}	
\maketitle
\begin{abstract}
The adaptive nonconforming Morley finite element method (FEM) approximates a
regular solution to the von K\'{a}rm\'{a}n equations with optimal convergence rates
for sufficiently fine triangulations and small bulk parameter in the D\"orfler marking.
This follows from the general axiomatic framework with the key arguments  of stability, reduction, discrete reliability, and quasiorthogonality of an explicit residual-based error estimator. Particular attention is on the nonlinearity and the piecewise Sobolev embeddings 
required in the resulting trilinear form in the weak formulation of the nonconforming
discretisation.  The discrete reliability follows with a 
conforming companion for the discrete Morley functions from the medius analysis. The quasiorthogonality also
relies on a novel piecewise $H^1$ a~priori error estimate and a careful analysis of the nonlinearity.  
\end{abstract}

\noindent {\bf Keywords:} von K\'{a}rm\'{a}n equations, 
adaptivity, finite element method, a~posteriori error estimate, 
piecewise $H^1$ a~priori,  Morley finite element,
nonconforming finite element method, companion operator, 
medius analysis, axioms of adaptivity, optimal convergence rate, discrete reliability, 
quasiorthogonality

\noindent {\bf AMS Classification:}  65N30, 65N12, 65N50

\section{Introduction}

This paper establishes the optimal convergence rates of an adaptive finite element method for the nonconforming Morley approximation to a regular solution of the semilinear von K\'{a}rm\'{a}n equations in a bounded polygonal 
Lipschitz domain $\Omega$ in the plane. 
\noindent {\subsection {The mathematical model} The mathematical model describes the deflection $u$
of very thin elastic plates by a semi-linear system of fourth-order partial differential equations:  
For a given load function $f\in L^2(\Omega)$, seek $u,v \in H^2_0(\Omega)$ such that
\begin{equation}\label{vkedG}
\Delta^2 u =[u,v]+f \text{ and }\Delta^2 v =-\half[u,u] \quad
 \text{ in } \Omega.
\end{equation}
Here and throughout the paper, $\Delta^2$  denotes the biharmonic operator with  
$\displaystyle\Delta^2\varphi =\varphi_{xxxx}+2\varphi_{xxyy}+\varphi_{yyyy}$   and  
$[\bullet,\bullet]$ denotes the von K\'{a}rm\'{a}n bracket with 
$[\eta,\chi] =\eta_{xx}\chi_{yy}+\eta_{yy}\chi_{xx}-2\eta_{xy}\chi_{xy}=\cof(D^2\eta):D^2\chi$
 for the co-factor matrix $\cof(D^2\eta)$ of $D^2\eta$ (the colon $:$ denotes the scalar product between $2\times 2$ matrices) for smooth functions $\varphi, \eta, \chi $ 
 and their partial derivatives $\varphi_{xx} $ etc. 

\subsection{A brief literature review} The existence of solutions, regularity, and bifurcation phenomena are discussed in \cite{CiarletPlates,Knightly,BergerFife} and the references therein. 
The weak solutions $u,v\in \hto$ to the \vket \eqref{vkedG} belong to 
$\hto\cap H^{2+\gamma}(\Omega)$ with the index of elliptic regularity $\gamma>1/2$ determined by the interior angles of the polygonal boundary $\partial\Omega$ with 
$\gamma=1$ if $\Omega$ is convex \cite{BlumRannacher}. In view of the first-order convergence 
rates even if $\gamma>1$ owing to the quadratic ansatz functions in this fourth-order problem, suppose that $\gamma$ is the minimum of one and the regularity index of the polygonal domain $\Omega$, so $1/2<\gamma\le 1$.

The major challenges in the numerical analysis of \eqref{vkedG} are the non-linearity 
and the higher-order nature of the equations.  
The papers \cite{Brezzi,Miyoshi,Quarteroni,Reinhart} study the approximation and error bounds for {\it regular} solutions to \eqref{vkedG} for conforming, mixed, and hybrid FEMs. 
Meanwhile,   nonconforming FEMs \cite{GMNN_Morley}, a $C^0$ interior penalty method \cite{BS_C0IP_VKE}, and discontinuous Galerkin FEMs \cite{CCGMNN18} have been 
investigated and  \cite{CCGMNN_Semilinear} suggests an abstract framework for 
  {\it a priori}  and {\it a posteriori} error control applicable to  the von K\'{a}rm\'{a}n equations. The most popular of those is the nonconforming Morley FEM for it is parameter free and is as simple as quadratic Lagrange finite elements; the reader may consider the finite element program in \cite[Sec. 6.5]{CCDGJH14} with less than 30 lines of Matlab.
 
Given the reduced elliptic regularity on nonconvex polygons with $\gamma<1$, 
the convergence for a quasiuniform triangulation with (maximal) mesh-size 
$h_{\max}$ is not better than
 $h_{\max}^\gamma$  and adaptive mesh-refining is mandatory. 
Little is known in the literature  about adaptive finite element methods (FEMs) 
and their convergence rates for semilinear problems.   For particular  strictly monotone and Lipschitz continuous 
operators, the residuals are similar to their  linear relatives  
with  unique exact (resp. discrete solutions) and optimal convergence rates 
are known \cite{Pretetal,cc14,ccdgms}. Besides the p-Laplacian \cite{BDK2011}, 
there are merely convergence proofs (but no optimal rates) for semilinear second-order
problems \cite{HPZ}. 
The other known results are for eigenvalue problems for the Laplacian or the bi-Laplacian
with a perturbation of the right-hand side $f:=\lambda u$ 
in the exact problem replaced by $f_h:=\lambda_h u_h$ on the discrete level. 
Since $f-f_h:=\lambda u-\lambda_h u_h$ is a higher-order perturbation,   the axioms of adaptivity \cite{cc14,cc_hella_18}  lead to  optimal convergence rates for sufficiently small
mesh-sizes.

\subsection{Contributions} This paper provides the first rate-optimal adaptive algorithm  
 for the    \vket  \eqref{vkedG}.   In the absence of further structural 
information (e.g.,  the Rayleigh-Ritz principle for symmetric eigenvalue problems  in 
\cite{CarstensenGedicke2014,CCDG14} or small loads as in \cite{Knightly}) 
the mesh-size has to be sufficiently small to  guarantee the existence of a unique discrete
solution   $\Psi_\M\in \bV(\cT)$ close to   $\Psi\in \bV$. This is achieved in a pre-asymptotic step of
the proposed adaptive algorithm AMFEM
by uniform mesh refinements until the mesh-size  is smaller than or equal to some input parameter 
$\delta$. The standard adaptive FEM with solve, estimate, mark, and refine applies thereafter 
with a bulk parameter $\theta$ in the D\"orfler marking  
\cite{BinevDahmenDevore04,cc14,cc_hella_18,CKNS08,Stev07}.  
This paper presents  an  adaptive algorithm AMFEM and establishes optimal convergence rates 
for all sufficiently small positive $\delta$ and $\theta$.

For a triangle $T$ of area $|T|$,
the  Morley finite element approximation  $\Psi_\M:=(u_\M,v_\M)$  to \eqref{vkedG}
leads to a volume residual 
$\mu(T) :=|T| (\|  f+ [u_\M,v_\M] \|^2_{L^2(T)} +\|   [u_\M,u_\M] \|^2_{L^2(T)} )^{1/2}$. 
 Despite the mesh-size factor $h_T^2=|T|$,  this volume contribution 
is {\em not} of higher order: A (standard, whence undisplayed) local efficiency error analysis reveals that $\mu(T) \lesssim \|D^2(\Psi-\Psi_\M)\|_{L^2(T)} $ plus data oscillations unlike in the aforementioned eigenvalue error analysis \cite{DG_Morley_Eigen}. The remedy in this paper consistes of a new reduction property of the sum of the  $\mu(T)$ contributions 
up to small perturbations and a novel {\it a priori} error estimate in the piecewise $H^1$ norm of $\Psi-\Psi_\M$.

This paper establishes the adaptive Morley FEM as the first scheme with guaranteed optimal rates for the von K\'{a}rm\'{a}n plate model. It is therefore suggested as the method of choice for a nonconvex domain.

%

\subsection{Structure of the paper} The outline of the remaining parts of this paper reads as follows. Section 2 recalls  some known preliminaries 
about the analysis of the von K\'{a}rm\'{a}n equations and  tools from the medius analysis of the Morley FEM. 
Section 3 presents the  {\it a priori} error estimates with a novel 
piecewise $H^1$ norm error estimate. Section~4 recalls the  
explicit residual-based error estimator from \cite{CCGMNN_Semilinear}  and introduces 
the adaptive algorithm AMFEM for the nonconforming Morley FEM. 
Section~5 gives details and proofs of  stability, reduction, discrete reliability,  and quasiorthogonality. This 
and  
\cite{cc14,cc_hella_18} 
guarantee the optimal convergence rates of the proposed adaptive Morley FEM. 
The outline is restricted  to two space dimensions {since} the von  K\'{a}rm\'{a}n equations are intrinsically two-dimensional.

\subsection{General Notation} Standard notation of  Lebesgue and Sobolev spaces, 
their norms, and $L^2$ scalar products  applies throughout the paper
such as the abbreviations $\|\bullet\|_p$ for 
$\|\bullet\|_{L^p(\Omega)}$  { (resp. $\|\bullet\|_{p,\omega}$ for
$\|\bullet\|_{L^p(\omega)}$ where $\omega\subset\Omega$}) and $\|\bullet\|_{m,p}$  for 
$\|\bullet\|_{W^{m,p}(\Omega)}$
and the local (resp. piecewise) 
version  $\|\bullet\|_{m,p,\omega}:=\|\bullet\|_{W^{m,p}(\omega)}$
(resp. $\|\bullet\|_{m,p,\omega,\text{pw}}$)
 and for the related seminorms.
The notation ${\bf H}^s(\Omega)$ (resp. ${\bf L}^p(\Omega)$)  denotes the product space  
 $H^{s}(\Omega) \times H^s(\Omega)$ (resp. $L^p(\Omega) \times L^p(\Omega)$) for 
 $s \in {\mathbb R}$ (resp. $1\le p \le\infty$). 
 The triple norm  $\trinl \bullet \trinr:=|\bullet|_{H^{2}(\Omega)}$ is the energy norm
 and  $\trinl \bullet \trinr_{\text{pw}}:=|\bullet|_{H^{2}(\cT)}:=\| D^2_\text{pw}\bullet\|_2$ 
 is its piecewise version with the piecewise Hessian $D_\text{pw}^2$. 
 {A weighted Young inequality for  positive  $ a,b,\epsilon>0$ means the arithmetic-geometric mean inequality 
\begin{equation}\label{eq:weighted}
 ab \le \frac{a^2}{2 \epsilon} + \frac{ \epsilon b^2}{2}.
 \end{equation}} 
The notation $A \lesssim B$ abbreviates $A \leq CB$ for some positive generic constant $C$, 
which depends only on the initial triangulation $\cT_{\text{init}}$ and on the regular solution $\Psi$;
 $A\approx B$ abbreviates $A\lesssim B \lesssim A$.

\section{Morley FEM for the von K\'{a}rm\'{a}n equations}
The first  subsection is devoted to the mathematical model  of the von K\'{a}rm\'{a}n equations and is followed by a subsection on triangulations and  discrete spaces.  The third subsection recalls 
interpolation and enhancement for  Morley functions  and the fourth collects further preliminaries. 
 
\subsection{The von K\'{a}rm\'{a}n equations}
Given $f\in L^2(\Omega)$ in a bounded polygonal Lipschitz domain $\Omega\subset\bR^2$, the weak formulation of the von K\'{a}rm\'{a}n equations \eqref{vkedG} seeks  $u,v\in  \hto$ such that
\begin{subequations}\label{wform}
	\begin{align}
	& a(u,\varphi_1)+ b(u,v,\varphi_1)+b(v,u,\varphi_1)= (f,\varphi_1)_{L^2(\Omega)}   \fl\varphi_1\in \hto, \label{wforma}\\
	& a(v,\varphi_2)-b(u,u,\varphi_2)   =0            \fl\varphi_2 \in \hto.\label{wformb}
	\end{align}
\end{subequations}
Here and throughout the paper, for all $\eta,\chi,\varphi\in \hto$ and all   $\Xi=(\xi_1,\xi_2),\Theta=(\theta_1,\theta_2)$, 
$\Phi=(\varphi_1,\varphi_2)\in  \bV:= \hto \times \hto$  (endowed with the product
norm also denoted by $\trinl\bullet\trinr$) 
set
\begin{align*}
&a(\eta,\chi):=\integ D^2 \eta:D^2\chi\dx  \; \; \text{and}\; \; b(\eta,\chi,\varphi):=-\half\integ [\eta,\chi]\varphi\dx, \\
& A(\Theta,\Phi):=a(\theta_1,\varphi_1)+a(\theta_2,\varphi_2), \\
&  B(\Xi,\Theta,\Phi):=b(\xi_1,\theta_2,\varphi_1)+b(\xi_2,\theta_1,\varphi_1)-b(\xi_1,\theta_1,\varphi_2), \\ 
& {\rm and}
\; F(\Phi):= (f, \varphi_1)_{L^2(\Omega)}.
\end{align*}
The boundedness and ellipticity properties \cite{Brezzi, GMNN_BFS} read, 
for all $\Theta,\Phi \in \bV$,
\begin{align*}
&{A}(\Theta,\Phi)\leq \trinl\Theta\trinr \: \trinl\Phi\trinr,\;  {A}(\Theta,\Theta) = \trinl\Theta\trinr^2, \; B(\Xi, \Theta, \Phi) \lesssim  \trinl\Xi\trinr \: \trinl\Theta\trinr \: \trinl\Phi\trinr .
\end{align*}
The trilinear form  $b(\bullet,\bullet,\bullet)$ is symmetric in first two 
variables and so is $B(\bullet,\bullet,\bullet)$. 
The vector form of \eqref{wform} seeks $\Psi=(u,v)\in \bV$ with $N(\Psi)=0$ for 
the nonlinear function $N: \bV \rightarrow \bV^*$, 
\begin{equation}\label{vform_cts}
N(\Psi;\Phi):=A(\Psi,\Phi)+B(\Psi,\Psi,\Phi)-F(\Phi)=0\fl  \Phi\in\bV.
\end{equation}

\begin{thm}[Regularity  \cite{BlumRannacher, ngr}]\label{ap}
Given any $f\in H^{-1}(\Omega)$ with norm $\| f \|_{-1}=\| f \|_{H^{-1}(\Omega)}$, there exists at least one
solution $\Psi$ to  \eqref{vform_cts} and any such $\Psi$ belongs to 
$ {\bf H}^{2 + \gamma} (\Omega)\cap\bV $ for some elliptic 
regularity index $\gamma \in (1/2,1]$ 
with  
$\trinl\Psi\trinr \lesssim  \| f \|_{{-1}} $ and
$ \|\Psi\|_{H^{2 + \gamma}(\Omega)}   \lesssim \| f \|^3_{-1} +
 \| f \|_{-1}$. \hfill \qed
\end{thm}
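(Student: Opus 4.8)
The plan is to establish existence via a Schauder-type fixed-point argument combined with the linear biharmonic theory, and then to bootstrap the regularity using the elliptic regularity estimates for $\Delta^2$ on polygonal domains from \cite{BlumRannacher}. First I would recall the decoupling structure of \eqref{vform_cts}: given $u\in\hto$, the second equation $a(v,\varphi_2)=b(u,u,\varphi_2)$ for all $\varphi_2\in\hto$ uniquely determines $v=v(u)\in\hto$ by Lax--Milgram, since $a(\bullet,\bullet)$ is coercive on $\hto$ and $\varphi_2\mapsto b(u,u,\varphi_2)$ is a bounded linear functional with $\|v(u)\|_{H^2}\lesssim\|u\|_{H^2}^2$ by the boundedness of $b$. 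Substituting this into the first equation, $\Psi=(u,v(u))$ solves \eqref{vform_cts} if and only if $u\in\hto$ satisfies $a(u,\varphi_1)=F(\varphi_1)-b(u,v(u),\varphi_1)-b(v(u),u,\varphi_1)$ for all $\varphi_1$, i.e. $u$ is a fixed point of the map $\mathcal{S}:\hto\to\hto$ that sends $w$ to the Riesz representative (with respect to $a$) of $\varphi_1\mapsto F(\varphi_1)-b(w,v(w),\varphi_1)-b(v(w),w,\varphi_1)$.

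Next I would verify the hypotheses of Schauder's fixed-point theorem for $\mathcal{S}$. Boundedness: using $\|v(w)\|_{H^2}\lesssim\|w\|_{H^2}^2$ and the boundedness of $b$, one gets $\|\mathcal{S}(w)\|_{H^2}\lesssim\|f\|_{-1}+\|w\|_{H^2}^3$, so a sufficiently large ball $\overline{B}_R(0)$ in $\hto$ is mapped into itself provided $R$ is chosen appropriately relative to $\|f\|_{-1}$ — this already foreshadows the cubic dependence $\trinl\Psi\trinr\lesssim\|f\|_{-1}$ only in a neighbourhood where the nonlinear term is subdominant, which is why the stated bound is a consequence of the a priori estimate rather than of the ball argument alone; I would therefore extract $\trinl\Psi\trinr\lesssim\|f\|_{-1}$ by testing \eqref{wforma} with $\varphi_1=u$ and \eqref{wformb} with $\varphi_2=v$, adding, and exploiting that the cubic terms cancel (the symmetry of $b$ in its first two arguments makes $b(u,v,u)+b(v,u,u)-b(u,u,v)=b(u,v,u)$, and a further antisymmetry/integration-by-parts identity for the von K\'arm\'an bracket shows the net cubic contribution vanishes), leaving $\trinl\Psi\trinr^2\le\trinl\Psi\trinr^2 = F(u)\le\|f\|_{-1}\trinl u\trinr$. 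Compactness and continuity of $\mathcal{S}$: the map factors through the compact embedding $\hto\hookrightarrow W^{1,4}(\Omega)$ (and $\hto\hookrightarrow C^0(\overline\Omega)$ is not needed, $W^{1,4}$ suffices for the bracket since $[\eta,\chi]$ is a product of second derivatives paired against $\varphi$ — actually the relevant compactness is $H^2\hookrightarrow\hookrightarrow W^{1,p}$ for all $p<\infty$, which controls $b(w,v(w),\bullet)$ continuously in $w$), so $\mathcal{S}$ is completely continuous; Schauder then yields a fixed point $u\in\overline{B}_R(0)$, hence a solution $\Psi$.

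Finally, for the regularity bootstrap I would write each component as the solution of a linear biharmonic problem with the nonlinear terms frozen as data: $\Delta^2 u=[u,v]+f$ and $\Delta^2 v=-\tfrac12[u,u]$ in the distributional sense. Since $u,v\in\hto\subset H^2(\Omega)$, each second derivative lies in $L^2$, and on the plane the product of two $L^2$ functions need not be $L^2$ — but with the Sobolev embedding $H^2(\Omega)\hookrightarrow W^{1,p}(\Omega)$ for all $p<\infty$ one upgrades to $D^2u, D^2v\in L^q$ for $q$ slightly above $2$, so $[u,v],[u,u]\in L^r$ for some $r>1$; then the elliptic regularity for the biharmonic operator on the polygon (\cite{BlumRannacher}, which gives $\hto\cap H^{2+\gamma}$ regularity for right-hand sides in the appropriate negative-order or $L^r$ space) places $v\in H^{2+\gamma}(\Omega)$ with $\|v\|_{H^{2+\gamma}}\lesssim\|[u,u]\|\lesssim\trinl u\trinr^2\lesssim\|f\|_{-1}^2$. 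Re-inserting the improved regularity of $v$ (now $v\in H^{2+\gamma}$, hence $D^2v$ has better integrability, in fact $v\in C^0$ and more) into the first equation gives $[u,v]\in L^2(\Omega)$, and since $f\in L^2$ one more application of biharmonic regularity yields $u\in H^{2+\gamma}(\Omega)$ with $\|u\|_{H^{2+\gamma}}\lesssim\|f+[u,v]\|\lesssim\|f\|_{-1}+\trinl u\trinr\trinl v\trinr\lesssim\|f\|_{-1}+\|f\|_{-1}^3$, which combined with the $v$-estimate (absorbed, since $\|f\|_{-1}^2\lesssim\|f\|_{-1}+\|f\|_{-1}^3$ by Young's inequality \eqref{eq:weighted}) gives the claimed bound $\|\Psi\|_{H^{2+\gamma}(\Omega)}\lesssim\|f\|_{-1}^3+\|f\|_{-1}$. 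The main obstacle is the borderline Sobolev embedding in two dimensions: $H^2\not\hookrightarrow W^{1,\infty}$, so the first bootstrap step producing $[u,u]\in L^r$ with $r>1$ (rather than merely $r=1$, which would be insufficient for the polygonal biharmonic regularity theory) must be done carefully, tracking the exact integrability gained from $H^{2+\gamma}$ with $\gamma>1/2$; this is precisely where the hypothesis $\gamma>1/2$ enters, ensuring $H^{2+\gamma}\hookrightarrow C^0$ so that the product estimates in the second bootstrap step close.
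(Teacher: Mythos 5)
The paper offers no proof of Theorem~\ref{ap}: it is quoted from \cite{BlumRannacher, ngr} and closed with \qed, so any comparison is against the standard literature arguments rather than against text in this paper. Your overall architecture (reduce to a fixed point, derive the energy bound by testing with the solution, bootstrap regularity through the linear biharmonic shift theorem) is the right one, and your energy identity is essentially correct: by the full symmetry of $\int_\Omega[\eta,\chi]\varphi\dx$ on $H^2_0(\Omega)$ the cubic terms combine with \eqref{wformb} to give $\trinl u\trinr^2+2\trinl v\trinr^2=F(u)$, which yields $\trinl\Psi\trinr\lesssim\|f\|_{-1}$. But two steps as written do not close.

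First, existence. Your invariant-ball Schauder argument requires a radius $R$ with $C_1\|f\|_{-1}+C_2R^3\le R$, which exists only for small $\|f\|_{-1}$; the theorem asserts existence for every $f\in H^{-1}(\Omega)$. You half-acknowledge this but then still invoke "Schauder yields a fixed point in $\overline B_R(0)$". The standard repairs are either the Leray--Schauder principle (run the a priori energy identity along the homotopy $tF$, $t\in[0,1]$, to get a uniform bound on all fixed points of $t\mathcal{S}$) or Berger's variational argument (the solutions are critical points of a coercive, weakly lower semicontinuous energy). One of these must replace the invariant ball. Second, the first bootstrap step is based on a false claim: $u\in H^2(\Omega)$ in two dimensions does \emph{not} give $D^2u\in L^q$ for any $q>2$ (the embedding $H^2\hookrightarrow W^{1,p}$ improves first derivatives only), so $[u,u]$ is a priori merely in $L^1(\Omega)$, which is not an admissible right-hand side for the polygonal shift theorem. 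The correct route exploits the divergence structure $[\eta,\chi]=\divc\!\left(\cof(D^2\eta)\nabla\chi\right)$ (Piola identity) to place $[u,u]$ in a negative-order space such as $W^{-1,4/3}(\Omega)\hookrightarrow H^{-3/2}(\Omega)$ with norm $\lesssim\trinl u\trinr^2$; the resulting first gain of regularity for $v$ need not reach $H^{2+\gamma}$ with $\gamma>1/2$ in one step, so the bootstrap must genuinely iterate (each pass improves the integrability of $D^2v$ and $\nabla v$, hence of $[u,v]$) before the final application with $[u,v]+f\in L^2(\Omega)$ delivers the stated bound $\|\Psi\|_{H^{2+\gamma}(\Omega)}\lesssim\|f\|_{-1}^3+\|f\|_{-1}$. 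With these two repairs your outline matches the cited proofs.
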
 

A solution $\Psi$ to  \eqref{vform_cts} is called  a {\em regular solution} if the Frech\'et derivative 
$DN(\Psi)\in L(\bV;\bV^*)$ of $N$ at 
$\Psi$ is an isomorphism. It is also known \cite{Knightly}
that for sufficiently small $f$, the solution is unique  and 
is a regular solution; but this paper aims at a local approximation of an
arbitrary regular solution. 

The  Fr\'echet derivative $DN(\Psi)= A(\bullet,\bullet)+2 B(\Psi,\bullet,\bullet)$ 
of the operator $N$ at the regular solution $\Psi$  is an isomorphism and   this is equivalent to an 
$\inf$-$\sup$ condition 
\begin{align}\label{inf-sup}
0<\beta:=\inf_{\substack{\Theta\in \bV\\ \trinl\Theta\trinr=1}}\sup_{\substack{\Phi\in \bV\\ \trinl\Phi\trinr=1}}\big{(}A(\Theta,\Phi)+2B(\Psi,\Theta,\Phi)\big{)}.
\end{align}

Given a regular solution $\Psi \in {\bV}$ to \eqref{wform}
and any  ${\boldsymbol g} \in {\bf H}^{-1}(\Omega):={\bf H}^{1}_0(\Omega)^*$
(with duality brackets 
$\langle \bullet, \bullet \rangle_{{H}^{-1}(\Omega) \times {H}^1_0(\Omega)}$)   
 the dual linearised problem 
seeks  ${\boldsymbol \zeta} \in \bV$ with 
\begin{align}
{\mathcal A}(\Phi,  {\boldsymbol \zeta}):= 
A (\Phi, {\boldsymbol \zeta} ) + 
2 B (\Psi, \Phi, {\boldsymbol \zeta})=\langle {\boldsymbol  g}, 
\Phi \rangle_{{ H}^{-1}(\Omega) \times {H}^1_0(\Omega)} \; \; \text{for all } \Phi \in 
\bV \label{duallinearized}.
\end{align}
This problem is  well-posed \cite{Brezzi,GMNN_BFS} and satisfies,
for the same elliptic regularity index $\gamma \in (1/2,1]$ as in Theorem~\ref{ap}, that
\begin{align} \label{regularity}
\trinl {\boldsymbol \zeta}  \trinr \lesssim \| {\boldsymbol \zeta}  \|_{H^{2+\gamma}(\Omega)} \lesssim 
\| {\boldsymbol g}\|_{-1}.
\end{align}
(In fact, $\gamma$ is the same as for the biharmonic operator  \cite{BlumRannacher} and is unique
throughout the paper).  

\subsection{Triangulations and  discrete spaces}\label{subsecTriangulationsanddiscretespaces}
Let $\cT_{\text{init}}$ be a regular  triangulation of the polygonal 
domain $\Omega \subset {\mathbb R}^2$ into triangles in the sense of Ciarlet
\cite{Brenner,Ciarlet}. 
Any refinement is defined by succesive
bisections of refinement edges,  where the refined  triangles inherit  the refinement edges 
according to Figure~\ref{fig1}. The newest vertex bisection (NVB) is described in any space
dimension in \cite{Stevenson08} and is known to generate shape-regular triangulations.  If there exists a finite number of successive bisections 
that start with $\cT_{\text{init}}$ (resp. $\cT$) and end with a regular triangulation $\T$ 
(resp. $\widehat\cT$), then $\T$ is called a 
admissible triangulation   (resp. $\widehat\cT$ is called admissible refinement of $\cT$); 
$\bT=\bT(\cT_{\text{init}})$ is the set of all admissible triangulations and 
$\bT(\cT)$ is the set of all admissible refinements of  $\cT\in\bT$.  In 2D
there are at most $8|\cT_{\text{init}}|$ different interior angles possible in any triangle
$T\in\cT\in\bT$.

This paper concerns two very different subsets of admissible triangulations $\bT$. Given any 
$0<\delta<1$, let $\bT(\delta)$ be the set of all triangulations $\cT$ with 
mesh-size $h_T:=|T|^{1/2}\le \delta$ for all triangles $T\in\cT$ with area $|T|$.
Given any  $N\in\bN_0$, let $\bT(N)$ be the set of all triangulations $\cT$ with at most
$|\cT| \le N+|\cT_{\text{init}}|$ triangles (the counting measure $|\bullet|$ describes the 
cardinality here, but denotes  the Euclidean length or the area at other places).  

\begin{figure}
\begin{center}
\begin{tikzpicture}[scale=0.6]
		
\draw[line width=0.25mm] (0,0)--(2,2)--(4,0);\draw[line width=0.25mm] (4,0)-- (0,0); 
\draw[line width=0.25mm,dashed] (0.1,0.1)--(3.9,0.1);

\draw[line width=0.25mm] (5,0)--(9,0);\draw[line width=0.25mm] (7,2)--(7,0); \draw[line width=0.25mm] (5,0)--(7,2)--(9,0);
\draw[line width=0.25mm,dashed] (5.2,0.1)--(6.9,1.8);\draw[line width=0.25mm,dashed] (7.1,1.8)--(8.9,0);

\draw[line width=0.25mm] (10,0)--(14,0)--(12,2)--(10,0); \draw[line width=0.25mm] (12,2)--(12,0)--(11,1);
\draw[line width=0.25mm,dashed] (10.1,0.1)--(11.9,0.1);
\draw[line width=0.25mm,dashed] (11.9,0.2)--(11.9,1.8);
\draw[line width=0.25mm,dashed] (12.1,1.8)--(13.9,0);

\draw[line width=0.25mm] (15,0)--(19,0)--(17,2)--(15,0); \draw[line width=0.25mm] (17,2)--(17,0)--(18,1);
\draw[line width=0.25mm,dashed] (15.1,0)--(16.9,1.8);
\draw[line width=0.25mm,dashed] (17.1,0.2)--(17.1,1.8);
\draw[line width=0.25mm,dashed] (17.2,0.1)--(18.9,0.1);

\draw[line width=0.25mm] (20,0)--(24,0)--(22,2)--(20,0); \draw[line width=0.25mm] (22,2)--(22,0); \draw[line width=0.25mm] (21,1)--(22,0)--(23,1);
\draw[line width=0.25mm,dashed] (20.1,0.1)--(23.9,0.1);
\draw[line width=0.25mm,dashed] (21.9,0.2)--(21.9,1.8);
\draw[line width=0.25mm,dashed] (22.1,0.2)--(22.1,1.8);
\end{tikzpicture}
\end{center}
\caption{Possible refinements of a triangle $T$ in one level within the NVB. The dashed lines indicate the refinement edges of the sub-triangles as in \cite{BinevDahmenDevore04,Stevenson08}.}
\label{fig1}
\end{figure}
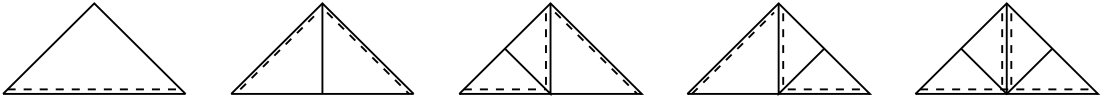

Given any $\cT\in\bT$, let $P_k(\cT)$ denote the piecewise polynomials of degree
at most $k\in \bN_0$. The mesh-size $h_\cT\in P_0(\cT)$ is defined by
$h_\cT|_T:=h_T:= |T|^{1/2} \approx \text{\rm diam}(T)$ in any triangle 
$T\in\cT$ of area $|T|$.
Let   the $L^2$ projection $\Pi_k$ onto the space of piecewise polynomials 
${P}_k(\T)$ of degree at most $k$ act componentwise on vectors or matrices. 
 {The dependence of $\Pi_k$ (and also the interpolation and enhancement operators $I_\M$ and $E_\M$ defined in the next subsection) on the shape regular triangulation $\cT$ is undisplayed for notational convenience.} The oscillations of $f$ in $\cT$ read $\text{osc}_{m}(f,\cT):=\| h_\cT^2(f-\Pi_m f)\|$ for $m\in\bN_0$.  
The associated  nonconforming {\it Morley} finite element space  ${ \M}(\T)$ 
reads 
\[
{\M}(\T):=\Bigg{\{} v_\M\in P_2(\T){{\Bigg |}}
\begin{aligned}
&\; v_\M \text{ is continuous at the interior vertices and vanishes at the}\\
&\text{ vertices of }\partial \Omega;  \nabla_{\NC}{ v_\M} 
                    \text{ is continuous at the midpoints of}\\ 
& \text{ interior edges and vanishes at the  midpoints of boundary edges}\end{aligned}
\Bigg{\}}.
\]
The discrete space in the von K\'{a}rm\'{a}n equations is  $ \bV(\T):=\M(\T) \times \M(\T)$.
For all scalars $ \eta,\chi,\varphi\in  H^2_0(\Omega)+{\M}(\T)$
and all vectors $ \Xi=(\xi_{1},\xi_{2}),$ 
 $\Theta=(\theta_{1},\theta_{2})$, $ \Phi=(\varphi_{1},\varphi_{2})\in {\bf H}^2_0(\Omega)+  \bV(\T)$,  define the discrete bilinear, linear, trilinear  forms by
  \begin{align*}
  & a_\NC(\eta,\chi):=\sit D_{\pw}^2 \eta:D_{\pw}^2\chi\dx\text{ and } b_\NC(\eta,\chi,\varphi):=-\half\sit [\eta,\chi]_{\rm pw} \: \varphi\dx, \\
     & A_\NC(\Theta,\Phi):=a_\NC(\theta_1,\varphi_1)+a_\NC(\theta_2,\varphi_2), \; F(\Phi):=\sit f\varphi_1\dx,  \; \; \text{and}\\
      &B_\NC(\Xi,\Theta,\Phi):=b_\NC(\xi_{1},\theta_{2},\varphi_{1})+b_\NC(\xi_{2},\theta_{1},\varphi_{1})-b_\NC(\xi_{1},\theta_{1},\varphi_{2}). 
\end{align*} 
Notice that $B_\NC(\bullet,\bullet,\bullet)$ is well-defined 
(by the global Sobolev embedding $H^2_0(\Omega) \hookrightarrow L^{\infty}(\Omega)$ {\cite{Adams75}} for the last component)
and symmetric with respect to the first two arguments, i.e., 
$B_\NC(\Xi,\Theta,\Phi)= B_\NC(\Theta,\Xi,\Phi)$ for all $\Xi,\Theta,\Phi\in  \widehat{\bV}$.
The Morley  FEM seeks $\Psi_{\M}=(u_\M,v_\M)\in  \bV(\T)$ such that,  
for all $ \Phi_\M \in  \bV(\T)$,
\begin{equation}\label{vformdNC}
  N_h(\Psi_{\M};\Phi_\M):=A_\NC(\Psi_\M,\Phi_\M)+B_\NC(\Psi_\M,\Psi_\M,\Phi_\M)-F(\Phi_\M)=0.
\end{equation}
\noindent The norm on $\widehat{\bV}:=\bV+\bV(\T)$ is defined by
$\displaystyle \trinl\Phi\trinr_{\NC}:=(\trinl \varphi_{1} \trinr_{\NC}^2+\trinl \varphi_{2} \trinr_{\NC}^2)^{1/2}$ for all $\Phi=(\varphi_1,\varphi_2) \in \widehat{\bV}$ with $ \displaystyle \trinl\varphi_{j}\trinr_{\NC}^2:= a_{\NC}(\varphi_j, \varphi_j)$ 
and  $\displaystyle \| \Phi\|_{1,2,\pw}^2
 :=\|\varphi_{1}\|_{1,2,\pw}^2+\|\varphi_{2}\|_{1,2,\pw}^2$ with 
 $\displaystyle \|\varphi\|_{1,2,\pw}^2 =  \sum_{K \in \T} (\|\varphi\|^2_{2,K}+ \| D \varphi\|_{2,K}^2)  $, 
defines the piecewise $H^1$ norm for  $j=1,2$. 

\subsection{Interpolation and enhancement}\label{sec:int}
Given $\cT\in\bT$ with the maximal mesh-size $h_{\ma}  = \max_{T \in \cT} h_T$ 
and its refinement $\widehat{\cT}\in\bT(\cT)$, define  
the Morley interpolation operator  $I_{\rm M}: \hto+ {\rm M}(\hT) \rightarrow {\rm M}(\T) $  
for any $\wv \in \hto+{\rm M}(\hT)$ through  the degrees of freedom 
\begin{align*} (I_{\rm M} \wv)(z)=\wv(z) \text{ for any vertex $z {\text{ of } \T}$} 
 \quad\text{and}\quad
\int_E\frac{\partial I_{\rm M} \wv}{\partial \nu_E}\ds 
=\int_E\frac{\partial \wv}{\partial \nu_E} \ds\text{ for any edge } E\text{ of } \T .
\end{align*}

\begin{lem}[Morley interpolation \cite{HuShi_Morley_Apost,CCDGJH14, 
DG_Morley_Eigen, CCP}]\label{Morley_Interpolation} 
The Morley interpolation operator satisfies 
 (a) the integral mean property $D^2_{\rm pw} I_{\rm M}v =\Pi_0 D_{\rm pw}^2 v $ for all $v \in H^2_0(\Omega)$ of the Hessian, (b) 
 the approximation and stability property 
\begin{equation*}
h_K^{-4} \|(1-I_{\rm M}) \wv\|^2_{2,K}+h_K^{-2}\|D_{\rm pw}(1-I_{\rm M}) \wv\|^2_{2,K} \lesssim
\|D_{\rm pw}^2 (1-I_{\rm M}) \wv\|^2_{2,K}=\|D_{\rm pw}^2  \wv\|_{2,K}^2-\|D^2_{\rm pw} I_{\rm M} \wv\|_{2,K}^2
\end{equation*}
for $K \in \T$ and $\wv \in \hto+ {\rm M}(\hT)$, and (c)
\[
\|D_{\rm pw}^2 (1-I_{\rm M})v\|_{2} \lesssim h_{\ma}^{\gamma} 
\| v\|_{H^{2+\gamma}(\Omega)} 
\quad\text{for all }\quad v \in \hto \cap H^{2+\gamma}(\Omega). \qed
\]
\end{lem}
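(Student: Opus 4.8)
The plan is to extract all three assertions from the degrees of freedom of $I_{\rm M}$ together with a single Gauss/integration-by-parts identity on each triangle of $\cT$. First I would prove (a): fix $v\in\hto$ and $K\in\cT$; since $I_{\rm M}v|_K\in P_2(K)$ has a constant Hessian, it suffices to show $\int_K D^2(v-I_{\rm M}v)\dx=0$ componentwise. Writing $w:=v-I_{\rm M}v\in\htk$ and integrating $\partial^2_{kl}w$ by parts over $K$ gives $\int_K\partial^2_{kl}w\dx=\sum_{E\subset\partial K}(\nu_E)_l\int_E\partial_k w\ds$, and after splitting $\partial_k w$ into its $\nu_E$- and $\tau_E$-components along each edge $E$, the tangential part contributes the difference of the two vertex values of $w$ (which vanishes because $I_{\rm M}v$ matches $v$ at the vertices of $\cT$) and the normal part contributes a multiple of $\int_E\partial_{\nu_E}w\ds$ (which vanishes by the edge degree of freedom of $I_{\rm M}$). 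Hence $D^2_{\rm pw}I_{\rm M}v=\Pi_0 D^2_{\rm pw}v$.

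For (b) I would first upgrade this orthogonality to $\wv\in\hto+{\rm M}(\hT)$ and arbitrary $K\in\cT$: on $K$ the function $\wv$ is only piecewise $H^2$ over $\hT|_K$, so the integration by parts, carried out triangle by triangle over $\hT|_K$, produces in addition interior jump terms $\int_e[\partial_k\wv]\ds$ over the sub-edges $e$; split into tangential and normal parts, these vanish because $\wv$ is continuous at the interior vertices of $\hT$ and $[\partial_{\nu_e}\wv]$ is affine on $e$ with a zero at the midpoint of $e$. This yields $\int_K D^2_{\rm pw}(1-I_{\rm M})\wv\dx=0$ for every $K\in\cT$, and since $D^2_{\rm pw}I_{\rm M}\wv|_K$ is constant the Pythagoras identity $\|D^2_{\rm pw}(1-I_{\rm M})\wv\|_{2,K}^2=\|D^2_{\rm pw}\wv\|_{2,K}^2-\|D^2_{\rm pw}I_{\rm M}\wv\|_{2,K}^2$ follows immediately. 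It then remains to bound $h_K^{-4}\|(1-I_{\rm M})\wv\|_{2,K}^2+h_K^{-2}\|D_{\rm pw}(1-I_{\rm M})\wv\|_{2,K}^2$ by $\|D^2_{\rm pw}(1-I_{\rm M})\wv\|_{2,K}^2$: here I would note that $w:=(1-I_{\rm M})\wv$ has vanishing vertex values and vanishing edge-normal-derivative means along $\partial K$; after rescaling by $h_K$ and using that, up to similarity, only finitely many triangle shapes occur in $\bT$, this reduces to a broken Poincaré--Friedrichs inequality on a reference triangle for piecewise $H^2$ functions whose Morley degrees of freedom on the boundary vanish, with a constant depending only on the shape regularity of $\cT_{\rm init}$; I would take this inequality from the quoted Morley references.

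Finally (c) is immediate from (a): on each $K\in\cT$ one has $D^2_{\rm pw}(1-I_{\rm M})v|_K=(1-\Pi_0)D^2 v|_K$, so the $L^2$-projection estimate $\|(1-\Pi_0)g\|_{2,K}\lesssim h_K^{\gamma}|g|_{H^{\gamma}(K)}$ (valid for $g\in H^{\gamma}(K)$ and $1/2<\gamma\le1$ since $|K|\approx h_K^2$ and $\mathrm{diam}(K)\approx h_K$) applied to $g=D^2 v$ gives $\|D^2_{\rm pw}(1-I_{\rm M})v\|_{2,K}\lesssim h_K^{\gamma}|D^2 v|_{H^{\gamma}(K)}\le h_{\ma}^{\gamma}|D^2 v|_{H^{\gamma}(K)}$; squaring, summing over $K\in\cT$ and using $\sum_{K\in\cT}|D^2 v|_{H^{\gamma}(K)}^2\le|D^2 v|_{H^{\gamma}(\Omega)}^2\le\|v\|_{H^{2+\gamma}(\Omega)}^2$ (the local Gagliardo integrals sit on the pairwise essentially disjoint sets $K\times K\subset\Omega\times\Omega$, with plain additivity when $\gamma=1$) concludes the proof.

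The step I expect to be the real obstacle is the approximation bound in (b): because $\wv$ may be a Morley function on the refinement $\hT$, the interpolation error $(1-I_{\rm M})\wv$ is genuinely piecewise (not globally) $H^2$ on $K$, so one needs the \emph{broken} Poincaré--Friedrichs inequality with a constant independent of the refinement; this is where shape regularity and the finiteness of the occurring triangle shapes are essential, and it is exactly the ingredient imported from the Morley literature. Everything else is a direct consequence of the degrees of freedom and the orthogonality established in (a).
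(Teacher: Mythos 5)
Your proposal is correct and follows the standard argument from the Morley literature; note that the paper does not prove Lemma~\ref{Morley_Interpolation} at all but quotes it (with \qed) from the cited references, so your reconstruction is exactly the proof the paper implicitly relies on. The only ingredient you leave imported --- the broken Poincar\'e--Friedrichs inequality for piecewise $H^2$ functions with vanishing Morley degrees of freedom, with a constant depending only on shape regularity (not on the depth of the sub-refinement $\hT|_K$, so ``finitely many shapes'' alone does not suffice) --- is precisely the nontrivial part of those references, and your handling of the jump terms on interior sub-edges and the telescoping of tangential contributions along subdivided coarse edges is the right completion of the orthogonality argument in (a).
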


Given a vertex $z\in\cN$ in $\cT\in\bT$,  the closure of its patch  $\omega_z:=\text { int } \left( \cup \T(z)\right)$ covers the neighbouring triangles  $\T(z)$  in $\T$ with the vertex $z$. 
Given any triangle $K \in \T$ with its set of  vertices  ${\cal N}(K)$, 
its patch is  $\Omega(K):= \cup_{z \in {\cal N}(K)} \omega_z$   and 
$\mathcal{E}(\Omega(K))$ denotes the set of edges $E$  in $\cT$ with
distance zero to $K$. 

 
\begin{lem}[Companion operator \cite{BrennerMR1620215,DG_Morley_Eigen,CCP}]\label{hctenrich}  
Given any  $\cT\in\bT$ there exists an enrichment or companion operator 
$E_{\rm M}:{\rm M}(\T)\to \hto$ such that any $v_{\rm M} \in \rm M(\T)$ satisfies  
\begin{align*}
&(a)\quad  I_{\rm M} E_{\rm M} v_{\rm M} = v_{\rm M}, \quad (b)\quad 
 \Pi_0(v_{\rm M} - E_{\rm M}  v_{\rm M}) =0,  \quad (c)\quad 
\Pi_0D_{\rm pw}^2(v_{\rm M} - E_{\rm M}  v_{\rm M}) =0,  \\[2mm]
&(d)\quad 
\trinl  v_{\rm M} - E_{\rm M} v_{\rm M}\trinr_{\pw} \lesssim
\|h_\T^{-2} (v_{\rm M}-E_{\rm M} v_{\rm M})\|_{2} 
\lesssim  \min_{v \in \hto} \trinl  v_{\rm M} -v \trinr_{\pw}  ,  \quad\text{and}\\[-3mm]
&(e)\quad \sum_{m=0}^2 h_K^{2m-4} | v_{\rm M}-E_{\rm M} v_{\rm M}|_{H^m(K)}^2 
\lesssim \sum_{E \in \mathcal{E}(\Omega(K))}
 h_E \| [D_{\rm pw}^2v_{\rm M} ]_E \:\tau_E\|^2_{L^2(E)}  \nonumber \\
&\hspace{57.5mm} \lesssim \min_{v \in \hto} \|D^2_{\rm pw}(v_{\rm M} -v)\|^2_{L^2(\Omega(K))}\quad \text{for }K\in\cT. \qquad\qed
\end{align*}
\end{lem}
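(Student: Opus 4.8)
\emph{Plan of proof.}\quad The plan is a two-stage construction — a conforming averaging followed by a bubble correction — together with the verification of (a)--(e). Fix a $C^1$ finite element space $S\subset\hto$ on $\cT$ or a fixed refinement thereof (for instance the Hsieh--Clough--Tocher or the Argyris space) and define an averaging $J_1v_{\rm M}\in S$ of $v_{\rm M}\in\M(\cT)$ by prescribing each nodal degree of freedom of $S$ as a weighted mean over the adjacent triangles of the corresponding value or derivative of $v_{\rm M}$; the function value at an interior vertex $z$ is set to the single number $v_{\rm M}(z)$ (which is well defined by the Morley continuity at interior vertices), and every degree of freedom located on $\partial\Omega$ is set to zero. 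Then put
\[
E_{\rm M}v_{\rm M}:=J_1v_{\rm M}+\sum_{E\in\mathcal E(\Omega)}\gamma_E\,b_E+\sum_{T\in\cT}\beta_T\,b_T\in\hto,
\]
where $b_E\in H^2_0(\omega_E)$ (extended by zero to $\Omega$) is a fixed bubble on the edge patch $\omega_E$ — the union of the triangles containing the interior edge $E$ — with $\int_E\partial_\nu b_E\ds=1$, and $b_T\in H^2_0(T)\subset\hto$ is a fixed interior bubble with $\int_Tb_T\dx=|T|$; the coefficients are $\gamma_E:=\int_E\partial_\nu(v_{\rm M}-J_1v_{\rm M})\ds$ and, once the $\gamma_E$ are fixed, $\beta_T:=|T|^{-1}\int_T(v_{\rm M}-J_1v_{\rm M}-\sum_{E\in\mathcal E(\Omega)}\gamma_Eb_E)\dx$.

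Since $b_E$ and $b_T$ vanish together with their gradients on $\partial\omega_E$ resp.\ $\partial T$, each bubble vanishes at every vertex, and $\int_{E'}\partial_\nu b_E\ds=\delta_{EE'}$ as well as $\int_{E'}\partial_\nu b_T\ds=0$ for all edges $E'$. Hence $(E_{\rm M}v_{\rm M})(z)=(J_1v_{\rm M})(z)=v_{\rm M}(z)$ at every vertex $z$, while the choice of $\gamma_E$ yields $\int_E\partial_\nu E_{\rm M}v_{\rm M}\ds=\int_E\partial_\nu v_{\rm M}\ds$ for every edge $E$; thus $E_{\rm M}v_{\rm M}$ carries the same Morley degrees of freedom as $v_{\rm M}$, which is (a). Property (b) is the defining property of $\beta_T$. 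Property (c) then comes for free: the integral-mean property of $I_{\rm M}$ in Lemma~\ref{Morley_Interpolation}(a), applied to $E_{\rm M}v_{\rm M}\in\hto$, gives $\Pi_0D^2_{\rm pw}E_{\rm M}v_{\rm M}=D^2_{\rm pw}I_{\rm M}E_{\rm M}v_{\rm M}=D^2_{\rm pw}v_{\rm M}=\Pi_0D^2_{\rm pw}v_{\rm M}$, because $v_{\rm M}\in P_2(\cT)$ has a piecewise constant Hessian.

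The analytic core is the first inequality in (e). A scaling argument on the reference configuration bounds $|v_{\rm M}-J_1v_{\rm M}|_{H^m(K)}$, up to the shape-regularity constant and powers of $h_K$, by the jumps of $v_{\rm M}$ and of $\nabla_{\rm pw}v_{\rm M}$ at the vertices and across the edges of the patch $\Omega(K)$. These jumps are rigid for a Morley function: along an interior edge $E$, the jump $[v_{\rm M}]_E$ is a quadratic polynomial vanishing at the two endpoints of $E$ and $[\partial_{\nu_E}v_{\rm M}]_E$ is an affine polynomial with vanishing integral mean, so both are determined by the single quantity $[D^2_{\rm pw}v_{\rm M}]_E\tau_E$ and estimated, up to powers of $h_E$ that match the weights $h_K^{2m-4}$ in (e), by $\|[D^2_{\rm pw}v_{\rm M}]_E\tau_E\|_{L^2(E)}$; along a boundary edge the homogeneous Morley degrees of freedom of $v_{\rm M}$ and the boundary conditions of $E_{\rm M}v_{\rm M}\in\hto$ play the same role. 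The bubble coefficients $\gamma_E$ and $\beta_T$ are bounded by the mismatch $v_{\rm M}-J_1v_{\rm M}$ on $E$ resp.\ on $T$, hence, after the rigidity step, by the same edge terms; since the bubbles $b_E,b_T$ are uniformly scaled, adding them preserves all estimates. Summing over the finitely many edges of $\Omega(K)$ gives the first inequality in (e) for $m=0,1,2$.

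Finally, (d) and the second inequality in (e) follow from the first by standard arguments. By the rigidity just used, the right-hand side of (e) is comparable to $\sum_{E\in\mathcal E(\Omega(K))}h_E^{-1}\|[\nabla_{\rm pw}v_{\rm M}]_E\|_{L^2(E)}^2$; for any $v\in\hto$ one has $[\nabla v]_E=0$ (a single-valued $H^1$-trace), so $[\nabla_{\rm pw}v_{\rm M}]_E=[\nabla_{\rm pw}(v_{\rm M}-v)]_E$, and a scaled trace inequality together with the Poincar\'e inequality on $\Omega(K)$ — after normalising $v$ by an affine function so that $\nabla(v_{\rm M}-v)$ has vanishing mean on $\Omega(K)$ — turns the edge sum into $\|D^2_{\rm pw}(v_{\rm M}-v)\|_{L^2(\Omega(K))}^2$; taking the infimum and using density of smooth functions in $\hto$ yields the second inequality in (e). For (d), the inverse estimate $\trinl w\trinr_{\pw}\lesssim\|h_\cT^{-2}w\|_2$ applies to the piecewise polynomial $w:=v_{\rm M}-E_{\rm M}v_{\rm M}$, and summing (e) over $K\in\cT$ with the finite overlap of the patches $\Omega(K)$ produces $\min_{v\in\hto}\|D^2_{\rm pw}(v_{\rm M}-v)\|_2^2$ on the right. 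The main obstacle is the first inequality in (e): designing the averaging and the bubble correction so that their combined error is controlled \emph{solely} by the tangential Hessian jumps $\|[D^2_{\rm pw}v_{\rm M}]_E\tau_E\|_{L^2(E)}$ — in particular handling the boundary edges and keeping property (a) intact — is the delicate point; the trace-and-Poincar\'e step behind the second inequality in (e) is the only other place that needs care.
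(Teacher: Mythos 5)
Your construction is correct and coincides with the one the paper relies on: Lemma~\ref{hctenrich} is quoted from \cite{BrennerMR1620215,DG_Morley_Eigen,CCP} without proof, and the paper's own sketch in Remark~\ref{remexctensions} describes exactly your two-stage design (an HCT-type averaging $J_1$ followed by bubble corrections enforcing the Morley degrees of freedom and the integral-mean property), with the estimates (d)--(e) obtained, as in your argument, from the rigidity of the Morley jumps, scaling, inverse and trace--Poincar\'e inequalities. No discrepancy to report.
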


\begin{rem}[$\widehat I_\M-I_\M=0$ on $\T\cap \hT$]\label{rem:new}
Throughout this paper,  $\widehat{v}_\M - I_\M \widehat{v}_\M=0$ in each $K \in \T\cap\widehat{\T}$ is employed for  $\widehat{v}_\M\in {\rm M}(\hT) $.
(This local projection property follows  from the definition of $I_\M$ for a triangle $K \in \T\cap\widehat{\T}$ and the degrees of freedom in ${\rm M}(\T)$ resp. ${\rm M}(\hT)$.)
\end{rem}

\begin{rem}[consequence] 
It is an immediate consequence of Lemma~\ref{hctenrich} and 
Lemma~\ref{Morley_Interpolation}.c in the end that
\begin{align} \label{magic1}
\trinl I_{\M} {v} - E_{\M} I_{\M} {v} \trinr_{\pw} 
& \lesssim 
\min_{w \in {H^2_0(\Omega)}} \trinl  I_{\M} v   -w \trinr_{\pw} 
 \le \trinl v - I_{\M} v \trinr_{\pw} \lesssim h_{\ma}^{\gamma}
\| v \|_{{H}^{2+\gamma}(\Omega)}.
\end{align}
holds for any $v \in \hto \cap {H}^{2+\gamma}(\Omega)$. 
A similar estimate can be found in \cite[Eq (3.10)]{BSZ2013}.
\end{rem}

\begin{rem}[extension]\label{remexctensions}
The enrichment or a companion operator $E_{\rm M}$ is designed in  \cite{DG_Morley_Eigen} 
 first in terms of the \text{\rm HCT}  FEM
to derive $J_1 v_{\rm M}\in H^2_0(\Omega)$. In the second step,
a linear combination of the squares of the cubic bubble-functions $b_T^2$
on the triangle $T\in\cT$ is added to define $J_2  v_{\rm M}\in H^2_0(\Omega)$ with the 
prescribed integral means to deduce $\Pi_0(   v_{\rm M}-J_2  v_{\rm M})=0$. 
This can be extended to  piecewise polynomials $p_m(T) \, b_T^2$ 
(rather than $\bR \, b_T^2$ for $m=0$) 
to design some $J_{2+m}  v_{\rm M}\in H^2_0(\Omega)$ with 
$\Pi_m(   v_{\rm M}-J_{2+m}  v_{\rm M})=0$ for any $m\in\bN_0$; cf. \cite{ccdgmsMathComp} for  details of a corresponding design for Crouzeix-Raviart
finite element schemes.
\end{rem}

\subsection{Preliminaries}
This subsection collects some preliminary lemmas from earlier 
contributions.  

\begin{lem}[Bounds for $A_{\NC}(\bullet,\bullet)$ {\cite[Lem. 4.2, 4.3]{BSZ2013}\cite[Lem. 4.6]{GMNN_Morley}}] \label{lemmabounds12}
If ${\Phi}, {\boldsymbol\chi}   \in  {\bf H}^{2}_0(\Omega)\cap {\bf H}^{2+\gamma}(\Omega)$ 
and ${\boldsymbol \chi}_{\rm M} \in \bV_{\rm M}$, then  (a)
$A_{\rm pw}({ \Phi}, E_{\rm M} {\boldsymbol \chi}_{\rm M} - {\boldsymbol\chi}_{\rm M}) 
\lesssim h_\ma^{\gamma}  \| {\Phi} \|_{H^{2 + \gamma}(\Omega)} 
\trinl {\boldsymbol \chi}_{\rm M} \trinr_{\rm pw} $
and \\ \mbox{}\hspace{22mm}
(b)   
$A_{\rm pw}({ \Phi}, I_{\rm M} {\boldsymbol \chi} - {\boldsymbol\chi}) 
\lesssim 
h_\ma^{2\gamma}  \|{\Phi} \|_{H^{2 + \gamma}(\Omega)} 
\| {\boldsymbol \chi} \|_{H^{2 + \gamma}(\Omega)}$.  \qed
\end{lem}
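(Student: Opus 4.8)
The plan is to treat the two estimates by inserting the Morley interpolation $I_{\M}$ as a pivot and exploiting the integral-mean property of the piecewise Hessian from Lemma~\ref{Morley_Interpolation}.a. For part (a), I would first write $E_{\M}{\boldsymbol\chi}_{\M}-{\boldsymbol\chi}_{\M} = (E_{\M}{\boldsymbol\chi}_{\M} - I_{\M}E_{\M}{\boldsymbol\chi}_{\M}) + (I_{\M}E_{\M}{\boldsymbol\chi}_{\M} - {\boldsymbol\chi}_{\M})$, and then use Lemma~\ref{hctenrich}.a, i.e.\ $I_{\M}E_{\M}{\boldsymbol\chi}_{\M} = {\boldsymbol\chi}_{\M}$, to see that the second bracket vanishes. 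Hence $A_{\pw}({\boldsymbol\Phi}, E_{\M}{\boldsymbol\chi}_{\M}-{\boldsymbol\chi}_{\M}) = A_{\pw}({\boldsymbol\Phi}, (1-I_{\M})E_{\M}{\boldsymbol\chi}_{\M})$. Since ${\boldsymbol\Phi}\in \bH^2_0(\Omega)$, the integral-mean property gives $D^2_{\pw}I_{\M}{\boldsymbol\Phi} = \Pi_0 D^2_{\pw}{\boldsymbol\Phi}$, so I can rewrite $A_{\pw}({\boldsymbol\Phi}, (1-I_{\M})w_{\M}) = A_{\pw}((1-I_{\M}){\boldsymbol\Phi}, (1-I_{\M})w_{\M})$ (the cross terms drop because $\Pi_0 D^2_{\pw}(1-I_{\M})w_{\M}=0$ by Lemma~\ref{Morley_Interpolation}.a applied to $w_{\M}=E_{\M}{\boldsymbol\chi}_{\M}\in H^2_0(\Omega)$). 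Then Cauchy--Schwarz and Lemma~\ref{Morley_Interpolation}.c bound the ${\boldsymbol\Phi}$-factor by $h_\ma^{\gamma}\|{\boldsymbol\Phi}\|_{H^{2+\gamma}(\Omega)}$, while the magic estimate \eqref{magic1} (or Lemma~\ref{hctenrich}.d) bounds $\trinl(1-I_{\M})E_{\M}{\boldsymbol\chi}_{\M}\trinr_{\pw} = \trinl E_{\M}{\boldsymbol\chi}_{\M}-I_{\M}E_{\M}{\boldsymbol\chi}_{\M}\trinr_{\pw}$ by $\trinl E_{\M}{\boldsymbol\chi}_{\M}-{\boldsymbol\chi}_{\M}\trinr_{\pw}\lesssim \trinl{\boldsymbol\chi}_{\M}\trinr_{\pw}$ using Lemma~\ref{hctenrich}.d once more. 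This yields (a).

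For part (b), both arguments are now conforming-to-conforming: ${\boldsymbol\Phi},{\boldsymbol\chi}\in\bH^2_0(\Omega)\cap\bH^{2+\gamma}(\Omega)$. I would again use $D^2_{\pw}I_{\M}{\boldsymbol\chi}=\Pi_0 D^2_{\pw}{\boldsymbol\chi}$, so $D^2_{\pw}(I_{\M}{\boldsymbol\chi}-{\boldsymbol\chi}) = (\Pi_0-1)D^2_{\pw}{\boldsymbol\chi}$, which is $L^2$-orthogonal to $P_0(\cT)$, in particular to $\Pi_0 D^2_{\pw}{\boldsymbol\Phi} = D^2_{\pw}I_{\M}{\boldsymbol\Phi}$. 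Therefore $A_{\pw}({\boldsymbol\Phi}, I_{\M}{\boldsymbol\chi}-{\boldsymbol\chi}) = A_{\pw}({\boldsymbol\Phi}-I_{\M}{\boldsymbol\Phi}, I_{\M}{\boldsymbol\chi}-{\boldsymbol\chi})$, and Cauchy--Schwarz followed by two applications of Lemma~\ref{Morley_Interpolation}.c produces the factor $h_\ma^{2\gamma}\|{\boldsymbol\Phi}\|_{H^{2+\gamma}(\Omega)}\|{\boldsymbol\chi}\|_{H^{2+\gamma}(\Omega)}$.

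The only delicate point I foresee is the bookkeeping of which orthogonality is available in part (a): the term $(1-I_{\M})E_{\M}{\boldsymbol\chi}_{\M}$ is genuinely piecewise (not globally $H^2$), so one must be careful that the integral-mean cancellation is applied to the \emph{conforming} factor ${\boldsymbol\Phi}$ (via Lemma~\ref{Morley_Interpolation}.a, valid for $H^2_0$ functions), not to $E_{\M}{\boldsymbol\chi}_{\M}$; but since $E_{\M}{\boldsymbol\chi}_{\M}\in H^2_0(\Omega)$ by Lemma~\ref{hctenrich}, the property does in fact apply to it as well, which is what makes the cross-term vanish. Everything else is routine Cauchy--Schwarz together with the already-cited approximation estimates, so the proof is short; the main work has been front-loaded into Lemmas~\ref{Morley_Interpolation} and \ref{hctenrich} and the estimate \eqref{magic1}.
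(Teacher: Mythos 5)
Your proof is correct. Note that the paper itself gives no proof of this lemma---it is quoted from the literature with an immediate \qed---so there is nothing internal to compare against; your argument is the standard one for these consistency estimates and every step checks out: the reduction of (a) to $A_{\pw}(\Phi,(1-I_{\M})E_{\M}{\boldsymbol\chi}_{\M})$ via Lemma~\ref{hctenrich}.a, the cancellation of the cross term through the integral-mean property $D^2_{\pw}I_{\M}=\Pi_0D^2_{\pw}$ on $H^2_0(\Omega)$, and the two factors $h_\ma^{\gamma}\|\Phi\|_{H^{2+\gamma}(\Omega)}$ (Lemma~\ref{Morley_Interpolation}.c) and $\trinl E_{\M}{\boldsymbol\chi}_{\M}-{\boldsymbol\chi}_{\M}\trinr_{\pw}\lesssim\trinl{\boldsymbol\chi}_{\M}\trinr_{\pw}$ (Lemma~\ref{hctenrich}.d with $v=0$), and likewise the double orthogonality plus two interpolation errors in (b). A minor shortcut in (a): Lemma~\ref{hctenrich}.c already states $\Pi_0D^2_{\pw}({\boldsymbol\chi}_{\M}-E_{\M}{\boldsymbol\chi}_{\M})=0$ directly, so one may replace $D^2\Phi$ by $(1-\Pi_0)D^2\Phi$ in the integral without first invoking $I_{\M}E_{\M}=1$; this is equivalent to your route and changes nothing of substance.
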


The discrete analogs of the global Sobolev embeddings
are of frequent relevance and easily derived with the companions 
for Morley functions; related results are known  \cite[Lem. 3.7]{BNRS}
for $C^0$ functions. 

\begin{lem}[discrete Sobolev embeddings]\label{new12}
Given $\cT_{\text{\rm init}}$ and $1\le p<\infty$,
there exist constants $C_{\text{\rm dea}}, C_{\text{\rm deb}}>0$ such that 
 (a) any  $ \widehat{v} \in H^2_0(\Omega)+{\cal M}(\T)$
satisfies$\|\widehat{v}\|_{\infty} + \|\widehat{v}\|_{1,p,\pw} 
\le C_{\text{\rm dea}}   \trinl \widehat{v} \trinr_{\rm pw}$
 and  (b)  any  $ v_{\rm M}  \in {\cal M}(\cT)\cup H^2_0(\Omega)$  satisfies   $\|{v}_{\rm M}\|_{p}  \le C_{\text{\rm deb}} |{v}_{\rm M}|_{1,2,\pw}$. 
\end{lem}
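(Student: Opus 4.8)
The plan is to prove the two discrete Sobolev embeddings of Lemma~\ref{new12} by reduction to the known continuous embeddings via the companion operator $E_\M$ of Lemma~\ref{hctenrich}. For part (a), write $\widehat v = v_\M + (\widehat v - v_\M)$ where $v_\M := I_\M \widehat v \in \M(\cT)$ if $\widehat v \in H^2_0(\Omega)$ (or simply $v_\M = \widehat v$ if $\widehat v\in\M(\cT)$ already — the case $\widehat v\in H^2_0(\Omega)$ is actually immediate from the classical embedding $H^2_0(\Omega)\hookrightarrow L^\infty(\Omega)\cap W^{1,p}(\Omega)$, so the substance is the Morley case). For $v_\M\in\M(\cT)$ decompose $v_\M = E_\M v_\M + (v_\M - E_\M v_\M)$. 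The first term lies in $H^2_0(\Omega)$, so the classical Sobolev embedding gives $\|E_\M v_\M\|_\infty + \|E_\M v_\M\|_{1,p} \lesssim \trinl E_\M v_\M\trinr \le \trinl v_\M - E_\M v_\M\trinr_{\pw} + \trinl v_\M\trinr_{\pw} \lesssim \trinl v_\M\trinr_{\pw}$ by Lemma~\ref{hctenrich}.d. For the remainder $v_\M - E_\M v_\M\in P_2(\cT)$ (a piecewise polynomial), use an inverse estimate on each triangle together with Lemma~\ref{hctenrich}.d again: on $K\in\cT$, $\|v_\M - E_\M v_\M\|_{L^\infty(K)} \lesssim h_K^{-1}\|v_\M - E_\M v_\M\|_{L^2(K)}$ and $\|D(v_\M - E_\M v_\M)\|_{L^p(K)} \lesssim h_K^{2/p-1}\|v_\M-E_\M v_\M\|_{L^\infty(K)}\cdot|K|^{\ldots}$, summed and bounded by $\|h_\cT^{-2}(v_\M - E_\M v_\M)\|_2 \lesssim \trinl v_\M\trinr_{\pw}$; shape-regularity keeps the constants uniform over $\bT$. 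Finally add the contribution of $\widehat v - v_\M = \widehat v - I_\M\widehat v$ via Lemma~\ref{Morley_Interpolation}.b.

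For part (b) the target is the \emph{weaker} bound $\|v_\M\|_p \le C_{\text{deb}} |v_\M|_{1,2,\pw}$ controlling only the $L^p$ norm by the piecewise $H^1$ seminorm. The route is the same splitting $v_\M = E_\M v_\M + (v_\M - E_\M v_\M)$: on the conforming part apply the continuous embedding $H^1_0(\Omega)\hookrightarrow L^p(\Omega)$ (valid for all finite $p$ in 2D), giving $\|E_\M v_\M\|_p \lesssim |E_\M v_\M|_{H^1(\Omega)}$; then bound $|E_\M v_\M|_{H^1(\Omega)} \le |v_\M|_{1,2,\pw} + |v_\M - E_\M v_\M|_{1,2,\pw}$ and absorb the second term using Lemma~\ref{hctenrich}.e with $m=1$ (or the $m=1$ summand in (e) after multiplying through, controlled by edge jumps of $D^2_{\pw}v_\M$, hence by $\trinl v_\M\trinr_{\pw}$ — here one must be slightly careful because we want the \emph{$H^1$ seminorm} on the right, not the full piecewise $H^2$ energy; this is where part (b) genuinely differs from part (a)). To get $|v_\M - E_\M v_\M|_{1,2,\pw}\lesssim |v_\M|_{1,2,\pw}$ one uses the projection property Lemma~\ref{hctenrich}.b,c: since $\Pi_0(v_\M - E_\M v_\M)=0$ and $\Pi_0 D^2_{\pw}(v_\M - E_\M v_\M)=0$, a Poincaré-type / Bramble–Hilbert argument on each triangle bounds $|v_\M - E_\M v_\M|_{H^1(K)}$ by $h_K|v_\M - E_\M v_\M|_{H^2(K)}$, and then by the inverse estimate $h_K|D^2(v_\M - E_\M v_\M)|_{L^2(K)}\lesssim |v_\M - E_\M v_\M|_{H^1(K)}+\ldots$; a more direct argument is to note $v_\M - E_\M v_\M\in P_2(K)$ has vanishing integral mean on each $K$, so scaling gives $|v_\M-E_\M v_\M|_{H^1(K)}\lesssim h_K^{-1}\|v_\M-E_\M v_\M\|_{L^2(K)}$ and simultaneously $\|D(v_\M - E_\M v_\M)\|^2_{L^2(K)}\lesssim$ the quantity in Lemma~\ref{hctenrich}.e with $m=1$, summed to $\lesssim \sum_E h_E\|[D^2_{\pw}v_\M]_E\tau_E\|^2_{L^2(E)}$; comparing this with the minimum over $v\in H^2_0(\Omega)$ on the right-hand side of (e) and choosing $v=0$ shows it is $\lesssim \|D^2_{\pw}v_\M\|^2_2$, which for a \emph{Morley} function is controlled by $|v_\M|_{1,2,\pw}$ via a discrete inverse-type estimate relating the piecewise Hessian to the piecewise gradient on the finite-dimensional space $\M(\cT)$ up to mesh-independent constants — or, more cleanly, by invoking that $\|D^2_{\pw}v_\M\|_2$ already appears inside $\|v_\M\|_{1,2,\pw}$ only through $h$-weighted terms that we do not control, so one should instead directly estimate $\|v_\M - E_\M v_\M\|_p$ by an inverse estimate $\lesssim \|h_\cT^{-1}(v_\M-E_\M v_\M)\|_2$ and use Lemma~\ref{hctenrich}.d's middle term after one power of $h$, i.e.\ $\|h_\cT^{-1}(v_\M-E_\M v_\M)\|_2 \le h_{\ma}\|h_\cT^{-2}(v_\M-E_\M v_\M)\|_2\lesssim h_{\ma}\trinl v_\M\trinr_{\pw}$, and finally bound $h_{\ma}\trinl v_\M\trinr_{\pw}\lesssim |v_\M|_{1,2,\pw}$ by the inverse estimate $h_K\|D^2 v_\M\|_{L^2(K)}\lesssim \|Dv_\M\|_{L^2(K)}$ on the piecewise-quadratic space.

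Assembling: combine the conforming-part and polynomial-remainder bounds with the triangle inequality; since all constants from the inverse estimates, the Bramble–Hilbert/Poincaré arguments, and the companion operator depend only on the shape-regularity of $\cT\in\bT$ (and the at most $8|\cT_{\text{init}}|$ possible interior angles), they are uniform over all admissible triangulations, which yields the asserted mesh-independent constants $C_{\text{dea}}, C_{\text{deb}}$. The main obstacle is part (b): obtaining the $L^p$ bound in terms of the piecewise $H^1$ \emph{seminorm} $|v_\M|_{1,2,\pw}$ (not the stronger piecewise $H^2$ energy norm) requires threading the inverse estimate on the piecewise-quadratic Morley space carefully so that the $h$-powers from Lemma~\ref{hctenrich}.d cancel exactly against the inverse-estimate gain, rather than leaving a stray negative power of $h$; once one commits to the route ``$\|v_\M - E_\M v_\M\|_p\lesssim \|h_\cT^{-1}(v_\M-E_\M v_\M)\|_2\lesssim h_{\ma}\trinl v_\M\trinr_{\pw}\lesssim |v_\M|_{1,2,\pw}$'' the bookkeeping is routine, but getting those three $\lesssim$'s to line up is the only subtle point.
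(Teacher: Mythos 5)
Your overall strategy for part (b) --- split $v_\M = E_\M v_\M + (v_\M - E_\M v_\M)$, apply the continuous embedding $H^1_0(\Omega)\hookrightarrow L^p(\Omega)$ (with Friedrichs) to the conforming part, and control the remainder by inverse estimates --- is exactly the paper's, and you correctly isolate the one delicate point: ending up with the piecewise $H^1$ \emph{seminorm} rather than the energy norm on the right. But the chain you finally commit to fails at its last link. The inequality $h_{\ma}\trinl v_\M\trinr_{\pw}\lesssim |v_\M|_{1,2,\pw}$ is false on general admissible triangulations: the inverse estimate is local, $h_K\|D^2 v_\M\|_{2,K}\lesssim\|Dv_\M\|_{2,K}$, so replacing $h_K$ by the global maximum $h_{\ma}$ introduces the ratio $h_{\ma}/h_K$, which is unbounded over $\bT$ (adaptively refined meshes are arbitrarily graded). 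The resulting constant would depend on the mesh grading, contradicting the assertion that $C_{\text{\rm deb}}$ depends only on $\cT_{\text{\rm init}}$. The locality is lost precisely when you write $\|h_\cT^{-1}(v_\M-E_\M v_\M)\|_2\le h_{\ma}\|h_\cT^{-2}(v_\M-E_\M v_\M)\|_2$ and then invoke Lemma~\ref{hctenrich}.d: that step discards the local weight you later need to cancel against the local inverse estimate.

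The repair --- and the paper's actual argument --- is to keep every mesh-size weight local. Multiply Lemma~\ref{hctenrich}.e by $h_K^2$ and take $v=0$ in the minimum to obtain, for each $K\in\cT$,
\begin{equation*}
h_K^{-1}\|v_\M-E_\M v_\M\|_{2,K}+|v_\M-E_\M v_\M|_{1,2,K}
\lesssim h_K\,|v_\M|_{2,2,\Omega(K),\pw}\lesssim |h_\cT v_\M|_{2,2,\Omega(K),\pw},
\end{equation*}
the last step by shape regularity ($h_K\approx h_{K'}$ for $K'\subset\Omega(K)$). Summing over $K$ with the finite overlap of the patches $\Omega(K)$ and then applying the \emph{elementwise} inverse estimate $h_K\|D^2 v_\M\|_{2,K}\lesssim\|Dv_\M\|_{2,K}$ yields $\|h_\cT^{-1}(v_\M-E_\M v_\M)\|_2+|v_\M-E_\M v_\M|_{1,2,\pw}\lesssim|h_\cT v_\M|_{2,2,\pw}\lesssim|v_\M|_{1,2,\pw}$; here the weight $h_K$ matches the element on which the inverse estimate is used, so no grading-dependent factor appears. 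Then $\|v_\M-E_\M v_\M\|_p\lesssim\|v_\M-E_\M v_\M\|_\infty\lesssim\|h_\cT^{-1}(v_\M-E_\M v_\M)\|_2$ handles the remainder and $|E_\M v_\M|_{1,2}\le|v_\M|_{1,2,\pw}+|v_\M-E_\M v_\M|_{1,2,\pw}$ the conforming part. (Part (a) is only cited from \cite{CCGMNN_Semilinear}, so your sketch there is not at issue; note merely that $H^2_0(\Omega)+\M(\cT)$ is a sum of spaces, not a union, so a general $\widehat v$ is $v+v_\M$ rather than one or the other.)
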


\begin{proof}
The proof of (a) is included as Lemma~4.7 in \cite{CCGMNN_Semilinear}. 
The proof of (b) for ${v}_\M\in\M(\T)$ 
is based on the following modification of 
Lemma~\ref{hctenrich}.e  for $K\in\hT$ and the shape-regularity in the second inequality below show
\[
  h_K^{-1} \|  {v}_{\rm M}-E_{\rm M} {v}_{\rm M}\|_{2,K}
 +  |  {v}_{\rm M}-E_{\rm M}  {v}_{\rm M}|_{1,2,K}
\lesssim  h_K |  {v}_{\rm M} |_{2,2,\Omega(K),\pw}
\lesssim | h_\cT  {v}_{\rm M} |_{2,2,\Omega(K),\pw}.
 \]
{This}, the finite overlap of the neighbourhoods $(\Omega(K):K\in\hT)$
and a piecewise  inverse estimate in the last step results in 
\begin{equation}\label{eqccnewproofofbnew1}
\|  h_\cT^{-1}(  {v}_{\rm M}-E_{\rm M}  {v}_{\rm M})\|_{2}+
 |  {v}_{\rm M}-E_{\rm M}  {v}_{\rm M}|_{1,2,\pw}\lesssim | h_\cT  {v}_{\rm M} |_{2,2,\pw}
 \lesssim |  {v}_{\rm M} |_{1,2,\pw}.
\end{equation}
%
An inverse estimate  for the finite element function  $v_\M-E_\M v_\M$  \cite{CCP}  shows the first estimate in  
\begin{equation}\label{eqccnewproofofbnew2}
\| {v}_\M- E_\M  {v}_\M\|_{\infty}\lesssim  
\|  h_\cT^{-1}
( {v}_\M- E_\M  {v}_\M)\|_{2}
\lesssim  |  {v}_{\rm M} |_{1,2,\pw}
\end{equation}
and the second follows from \eqref{eqccnewproofofbnew1}.
The continuity of the global Sobolev imbedding $H^1_0(\Omega) \hookrightarrow L^{p}(\Omega)$ {\cite{Adams75}}
shows $\|E_\M  {v}_\M\|_{p}\lesssim \|E_\M  {v}_\M\|_{1,2}\lesssim |E_\M  {v}_\M|_{1,2}$ with the Friedrichs inequality in the second step.
This, triangle inequalities, the  H\"older inequality, and  
\eqref{eqccnewproofofbnew2}  lead to 
\begin{align*}
\| {v}_\M\|_{p} \le \|E_\M  {v}_\M\|_{p} + \|v_\M- E_\M v_\M\|_{p}
\lesssim |E_\M v_\M|_{1,2} +  \|v_\M- E_\M v_\M\|_{\infty}
\lesssim |v_\M|_{1,2,\pw} +  |v_\M- E_\M v_\M|_{1,2,\pw}.
\end{align*}
This and a final application of  \eqref{eqccnewproofofbnew1} conclude the proof of (b).
\end{proof}
{
\begin{rem}
 For $p =\infty $, there would be an additional factor $(1+{|\ln h|})^{1/2}$   multiplied with the constant on the right-hand side of the inequality in Lemma \ref{new12}.b (see   \cite{BS2000}, \cite{ccsc} for details).
\end{rem}}


The following bounds apply frequently in the analysis of this paper; those are  based on 
Lemma~\ref{new12} and the global Sobolev imbedding  {\cite{Adams75}}
 $H^{2+\gamma}(\Omega) \hookrightarrow W^{2,4}(\Omega)$ (the latter requires $\gamma>1/2$).

\begin{lem}[bounds for $B_{\rm pw}(\bullet, \bullet,\bullet)$]\label{b_dG}  
 If $\Phi \in {\bf H}^{2+\gamma}(\Omega)$,  ${ \Theta},
 {\boldsymbol \chi},  {\boldsymbol \zeta}  \in \widehat{\mathcal V} ={\mathcal V} + {\mathcal V}(\cT)$, 
and $ \widehat{{\boldsymbol \chi}}_{\rm M} \in  {\mathcal V}(\hT) $ for $\hT\in\TT{\cT}$, then 
(a) $ B_{\rm pw}( {\boldsymbol \zeta} , \Theta,\boldsymbol \chi) \lesssim
		  \trinl  {\boldsymbol \zeta}  \trinr_{\rm pw}
		  \trinl \Theta \trinr _{\rm pw}\trinl \boldsymbol \chi \trinr_{\rm pw}$
and (b) $B_{\rm pw}(\Phi, \Theta,\widehat{\boldsymbol \chi}_{\rm M})  
 \lesssim 
  \| \Phi\|_{H^{2+\gamma}(\Omega)}\trinl \Theta\trinr_{\rm pw} |\widehat{\boldsymbol \chi}_{\rm M} |_{1,2,\pw}$.
\end{lem}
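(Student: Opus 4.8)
The plan is to reduce both parts to one elementary bound on the piecewise trilinear form. Expanding $B_{\rm pw}=B_\NC$ into its three summands and recalling $b_\NC(\eta,\chi,\varphi)=-\half\sum_{T\in\cT}\int_T[\eta,\chi]_{\rm pw}\,\varphi\dx$, one reads off from the definition of $B_\NC$ that in $B_{\rm pw}(\boldsymbol\zeta,\Theta,\boldsymbol\chi)$ the first slot of each $b_\NC$ is always a component of $\boldsymbol\zeta$, the second a component of $\Theta$, and the third a component of $\boldsymbol\chi$; likewise in $B_{\rm pw}(\Phi,\Theta,\widehat{\boldsymbol\chi}_{\rm M})$ the three slots are always filled by components of $\Phi$, $\Theta$, $\widehat{\boldsymbol\chi}_{\rm M}$, respectively. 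Since $[\eta,\chi]=\cof(D^2\eta):D^2\chi$ and $|\cof(D^2\eta)|=|D^2\eta|$ for symmetric $2\times2$ matrices, a matrix Cauchy--Schwarz inequality gives the pointwise bound $|[\eta,\chi]_{\rm pw}|\le|D^2_{\rm pw}\eta|\,|D^2_{\rm pw}\chi|$. Hence it suffices to control $\sum_{T\in\cT}\int_T|D^2_{\rm pw}\eta|\,|D^2_{\rm pw}\chi|\,|\varphi|\dx$ for the relevant scalar components $\eta,\chi,\varphi$, and the only freedom left is the choice of Hölder exponents.

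For (a) this is the piecewise counterpart of the continuous bound $B(\Xi,\Theta,\Phi)\lesssim\trinl\Xi\trinr\,\trinl\Theta\trinr\,\trinl\Phi\trinr$ stated above: here $\eta,\chi$ are components of $\boldsymbol\zeta,\Theta$ and $\varphi$ a component of $\boldsymbol\chi$, all three lying in $H^2_0(\Omega)+\M(\cT)$. I would apply a triangle-wise Hölder inequality with exponents $(2,2,\infty)$, estimate $\|\varphi\|_{L^\infty(T)}\le\|\varphi\|_{L^\infty(\Omega)}$, and then use Cauchy--Schwarz in the sum over $T\in\cT$ to reach $\|\varphi\|_{L^\infty(\Omega)}\,\|D^2_{\rm pw}\eta\|_{L^2(\Omega)}\,\|D^2_{\rm pw}\chi\|_{L^2(\Omega)}$. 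The discrete Sobolev embedding of Lemma~\ref{new12}(a) bounds $\|\varphi\|_{L^\infty(\Omega)}\lesssim\trinl\varphi\trinr_{\rm pw}\le\trinl\boldsymbol\chi\trinr_{\rm pw}$, while $\|D^2_{\rm pw}\eta\|_{L^2(\Omega)}=\trinl\eta\trinr_{\rm pw}\le\trinl\boldsymbol\zeta\trinr_{\rm pw}$ and analogously for $\chi$ and $\Theta$. Summing the three terms proves (a).

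For (b), $\eta$ is a component of $\Phi\in{\bf H}^{2+\gamma}(\Omega)$, $\chi$ a component of $\Theta\in\widehat{\mathcal V}$, and $\varphi$ a component of $\widehat{\boldsymbol\chi}_{\rm M}\in\mathcal V(\hT)$. Here I would instead use exponents $(4,2,4)$: a triangle-wise Hölder inequality followed by a discrete Hölder inequality in the sum over $T\in\cT$ (legitimate because $\tfrac14+\tfrac12+\tfrac14=1$, and because $\hT$ refines $\cT$ so $\sum_{T\in\cT}\|\widehat{\boldsymbol\chi}_{\rm M}\|_{L^4(T)}^4=\|\widehat{\boldsymbol\chi}_{\rm M}\|_{L^4(\Omega)}^4$) to obtain $\|D^2\eta\|_{L^4(\Omega)}\,\|D^2_{\rm pw}\chi\|_{L^2(\Omega)}\,\|\varphi\|_{L^4(\Omega)}$. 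Then the global Sobolev embedding $H^{2+\gamma}(\Omega)\hookrightarrow W^{2,4}(\Omega)$ (valid since $\gamma>1/2$) gives $\|D^2\eta\|_{L^4(\Omega)}\lesssim\|\Phi\|_{H^{2+\gamma}(\Omega)}$; the definition of the piecewise norm gives $\|D^2_{\rm pw}\chi\|_{L^2(\Omega)}=\trinl\chi\trinr_{\rm pw}\le\trinl\Theta\trinr_{\rm pw}$; and the discrete Sobolev embedding of Lemma~\ref{new12}(b) with $p=4$ (applied on $\hT$) gives $\|\varphi\|_{L^4(\Omega)}\lesssim|\varphi|_{1,2,\pw}\le|\widehat{\boldsymbol\chi}_{\rm M}|_{1,2,\pw}$. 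Summing the three contributions of $B_{\rm pw}$ proves (b).

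The only genuinely delicate point — the rest being routine Hölder/Cauchy--Schwarz bookkeeping together with the harmless observation that global $L^p$ norms do not see whether one sums over $\cT$ or over its refinement $\hT$ — is the choice of integrability exponents in (b). One cannot keep the merely piecewise-$H^2$ argument $\chi$ in $L^2$ and simultaneously push $\widehat{\boldsymbol\chi}_{\rm M}$ all the way down to its piecewise $H^1$ seminorm unless the extra smoothness of $\Phi$ is spent to place $D^2\Phi$ in $L^4$; this is exactly where the hypothesis $\gamma>1/2$ enters, and it is why $p=4$ rather than $p=\infty$ is used in Lemma~\ref{new12}(b), thereby avoiding the logarithmic factor of the remark following that lemma.
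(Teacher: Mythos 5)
Your proposal is correct and follows essentially the same route as the paper: for (a) a pointwise bound on the bracket, triangle-wise H\"older with exponents $(2,2,\infty)$, and the discrete embedding of Lemma~\ref{new12}.a; for (b) H\"older with the smooth factor in $W^{2,q}$ and the Morley test function in $L^p$, $1/p+1/q=1/2$, combined with $H^{2+\gamma}(\Omega)\hookrightarrow W^{2,q}(\Omega)$ and Lemma~\ref{new12}.b on $\hT$. The only (immaterial) difference is that you fix $q=p=4$ where the paper keeps the conjugate pair general, and your closing discussion of why the exponents must be balanced this way is accurate.
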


\begin{proof} (a) The definition of $b_{\NC}(\bullet,\bullet, \bullet)$, piecewise 
H\"older inequalities, and   Lemma~\ref{new12}.a in the last step
show for scalar test functions $\phi, \theta, \chi\in H^2_0(\Omega)+M(\cT)$ that
 \begin{align*}
 2|b_{\NC}(\phi, \theta, \chi)|  = | \sum_{K \in \T} \int_K [\phi, \theta] \chi  \dx | 
 \le \trinl \phi \trinr_{\NC}  \trinl \theta \trinr_{\NC} \|\chi\|_{\infty} 
  \le  C_{\text{\rm dea}} \trinl \phi \trinr_{\NC}  \trinl \theta \trinr_{\NC} \trinl \chi \trinr_{\NC}.
\end{align*} 
The same arguments show in 
(b) with $\phi\in H^2_0(\Omega)\cap H^{2+\gamma}(\Omega)$ and  $\widehat{ \chi}_\M\in M(\hT)$ that
 \begin{align*}
 2|b_{\NC}(\phi , \theta, \widehat{\chi}_\M)|  
  \le \|\phi\|_{2,q} \trinl \theta \trinr_{\NC} \|\widehat{\chi}_\M \|_{p} 
\le C_{\text{\rm S}}C_{\text{\rm deb}}  
\| \phi\|_{H^{2+\gamma}(\Omega)} \trinl \theta \trinr_{\NC} |\widehat{ \chi}_\M |_{1,2,\pw}
\end{align*} 
with the operator norm  $C_S$ of the continuous  global Sobolev imbedding  {\cite{Adams75}}
 $H^{2+\gamma}(\Omega) \hookrightarrow W^{2,q}(\Omega)$ for all $\gamma >0$, $2<q < \infty$, with $\frac{1}{p} +\frac{1}{q}=\frac{1}{2}$
and  Lemma \ref{new12}.b  (with $\hT$ replacing  $\cT$). The application of  the above  estimates in $B(\bullet,\bullet,\bullet)$ concludes the proof.
\end{proof} 
The discrete analog  to  \eqref{inf-sup}
at the regular solution   \cite{CCGMNN_Semilinear}
is a  key result  in the a~priori error analysis.

\begin{thm}[discrete inf-sup \cite{CCGMNN_Semilinear}]\label{dis_stab} 
Given a regular solution $\Psi \in {\bf H}^{2 + \gamma}(\Omega) \cap {\bV}$ to \eqref{vform_cts},   
there exist $\delta_1 >0$ and $\beta_1>0$ such that $\T \in \TO(\delta_1)$ implies 
 \begin{align}\label{dis-inf-sup}
\beta_1\le \inf_{\substack{\Theta_{\rm M} \in \rm \bV(\T) \\ \trinl\Theta_{\rm M}\trinr_{\rm pw}=1}}
\sup_{\substack{\Phi_{\rm M} \in \rm \bV(\T)\\ \trinl\Phi_{\rm M}\trinr_{\rm pw}=1}}
\big{(}A_{\rm pw}(\Theta_{\rm M},\Phi_{\rm M})+2B_{\rm pw}(\Psi,\Theta_{\rm M},\Phi_{\rm M})\big{)}.
\qquad\qed
\end{align} 
\end{thm}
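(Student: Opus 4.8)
The plan is to transfer the continuous inf-sup condition \eqref{inf-sup} to the discrete level by a perturbation argument that exploits the smoothing (compactness) in the lower-order trilinear term. First I would fix $\Theta_\M \in \bV(\cT)$ with $\trinl\Theta_\M\trinr_\pw = 1$ and use the companion operator $E_\M$ (acting componentwise) to produce a conforming approximant $\Theta := E_\M\Theta_\M \in \bV$. By Lemma~\ref{hctenrich}(d), $\trinl\Theta_\M - \Theta\trinr_\pw \lesssim \|h_\cT^{-2}(\Theta_\M - E_\M\Theta_\M)\|_2$, and since $\cT \in \TO(\delta_1)$ this is controlled; the key structural input is Lemma~\ref{hctenrich}(c), $\Pi_0 D^2_\pw(\Theta_\M - \Theta) = 0$, which makes the conforming part dominate in the energy norm. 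Apply \eqref{inf-sup} to $\Theta/\trinl\Theta\trinr$ to obtain $\Phi \in \bV$ with $\trinl\Phi\trinr = 1$ and $A(\Theta,\Phi) + 2B(\Psi,\Theta,\Phi) \gtrsim \trinl\Theta\trinr \gtrsim 1 - o(1)$, then set $\Phi_\M := I_\M\Phi \in \bV(\cT)$ as the discrete test function; by Lemma~\ref{Morley_Interpolation}(c), $\trinl\Phi - \Phi_\M\trinr_\pw \lesssim h_\ma^\gamma\|\Phi\|_{H^{2+\gamma}(\Omega)} \lesssim \delta_1^\gamma$, using the a~priori elliptic regularity of the dual problem \eqref{duallinearized}–\eqref{regularity} to bound $\|\Phi\|_{H^{2+\gamma}(\Omega)}$ in terms of $\|\Phi\|_{-1} \lesssim \trinl\Phi\trinr$.

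The core of the argument is then to compare $A_\pw(\Theta_\M,\Phi_\M) + 2B_\pw(\Psi,\Theta_\M,\Phi_\M)$ with $A(\Theta,\Phi) + 2B(\Psi,\Theta,\Phi)$ and show the difference is $O(h_\ma^\gamma)$, hence negligible for $\delta_1$ small. For the bilinear part, write the difference as a sum of four terms $A_\pw(\Theta_\M - \Theta, \Phi_\M) + A_\pw(\Theta, \Phi_\M - \Phi)$ (using $A_\pw = A$ on conforming arguments). The term $A_\pw(\Theta, \Phi_\M - \Phi) = A_\pw(\Theta, I_\M\Phi - \Phi)$ is bounded by Lemma~\ref{lemmabounds12}(a) (with the roles of $\Phi$ and $\boldsymbol\chi_\M$ adapted, or rather by the cleaner estimate for $A_\pw(\Phi, I_\M\boldsymbol\chi - \boldsymbol\chi)$ once $\Theta = E_\M\Theta_\M$ is reinterpreted) — more directly, since $\Theta \in \bV \cap {\bf H}^{2+\gamma}$ and $I_\M$ satisfies the integral-mean property Lemma~\ref{Morley_Interpolation}(a), one gets $A_\pw(\Theta, \Phi - I_\M\Phi) = A_\pw(\Theta - \Pi_0 D^2\Theta\text{-piece}, \dots)$, i.e. the orthogonality $\int_K D^2_\pw\Pi_0 w : D^2_\pw(1-I_\M)\Phi = 0$ kills the low-frequency part and leaves $\lesssim h_\ma^\gamma\|\Theta\|_{H^{2+\gamma}}\trinl\Phi\trinr_\pw$. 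The term $A_\pw(\Theta_\M - \Theta, \Phi_\M)$ needs $\trinl\Theta_\M - \Theta\trinr_\pw \lesssim h_\ma^\gamma$, which follows from Lemma~\ref{hctenrich}(d) together with the a~priori bound on $\trinl\Psi - \Psi_\M\trinr_\pw$ or, cleaner, from \eqref{magic1}-type reasoning once one notes $\Theta_\M$ itself need not be close to a smooth function — here I would instead avoid splitting through a fixed smooth $\Theta$ and use the following standard trick: since $\Pi_0 D^2_\pw(\Theta_\M - E_\M\Theta_\M) = 0$ and $A_\pw(\cdot,\cdot)$ tested against the $H^2_0$-conforming $\Phi$ sees only $\Pi_0$-orthogonal pieces up to $h_\ma^\gamma$ errors.

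For the trilinear part $2(B_\pw(\Psi,\Theta_\M,\Phi_\M) - B(\Psi,\Theta,\Phi))$, I would exploit that $B$ only needs one derivative less on the last argument: decompose as $B_\pw(\Psi, \Theta_\M - \Theta, \Phi_\M) + B_\pw(\Psi, \Theta, \Phi_\M - \Phi)$. The first is bounded by Lemma~\ref{b_dG}(b) (with $\Psi \in {\bf H}^{2+\gamma}$) as $\lesssim \|\Psi\|_{H^{2+\gamma}}\trinl\Theta_\M - \Theta\trinr_\pw|\Phi_\M|_{1,2,\pw} \lesssim \trinl\Theta_\M - E_\M\Theta_\M\trinr_\pw$, again $O(h_\ma^\gamma)$ by Lemma~\ref{hctenrich}(d); the second is $\lesssim \|\Psi\|_{H^{2+\gamma}}\trinl\Theta\trinr_\pw|\Phi_\M - \Phi|_{1,2,\pw}$, and by the improved piecewise $H^1$ approximation from Lemma~\ref{Morley_Interpolation}(b) one has $|\Phi - I_\M\Phi|_{1,2,\pw} \lesssim h_\ma \trinl\Phi\trinr_\pw$ (one order better than energy), so this is $O(h_\ma)$. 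Collecting, the total perturbation is $\lesssim h_\ma^\gamma (1 + \|\Psi\|_{H^{2+\gamma}})$, so choosing $\delta_1 > 0$ small enough that this is below $\beta/4$ yields $A_\pw(\Theta_\M,\Phi_\M) + 2B_\pw(\Psi,\Theta_\M,\Phi_\M) \ge \beta/2$, while $\trinl\Phi_\M\trinr_\pw \le 1 + C\delta_1^\gamma \le 2$; normalising $\Phi_\M$ gives the claim with $\beta_1 := \beta/4$. The main obstacle I anticipate is the bilinear-form perturbation $A_\pw(\Theta_\M - \Theta, \cdot)$: controlling $\trinl\Theta_\M - E_\M\Theta_\M\trinr_\pw$ by $h_\ma^\gamma$ is \emph{not} automatic for a generic discrete function of unit energy norm — it requires using $\cT \in \TO(\delta_1)$ through the companion estimate Lemma~\ref{hctenrich}(d), whose right-hand side $\min_{v\in H^2_0}\trinl\Theta_\M - v\trinr_\pw$ must itself be shown small, which in the abstract AFEM setting is only true \emph{a posteriori}; the correct route is therefore to pair the discrete test function against $I_\M\Phi$ directly and use the consistency-type orthogonality of the Morley interpolation (Lemma~\ref{Morley_Interpolation}(a)) rather than routing through $E_\M\Theta_\M$ on that side, keeping $E_\M$ only where its $\Pi_0$-orthogonality properties (b),(c) are genuinely needed.
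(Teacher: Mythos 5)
The paper does not prove Theorem~\ref{dis_stab}; it quotes it from \cite{CCGMNN_Semilinear} with a \qed. Your overall strategy --- continuous inf-sup plus $O(h_\ma^\gamma)$ consistency perturbations via $E_\M$ and $I_\M$ --- is the right family of argument, and you correctly identify the $\Pi_0$-orthogonalities of Lemma~\ref{hctenrich}.c and Lemma~\ref{Morley_Interpolation}.a as the structural inputs. But there are two genuine gaps. The first and decisive one is the regularity of the test function: the supremizer $\Phi$ produced by \eqref{inf-sup} is merely an element of $\bV$, and your claim $\|\Phi\|_{H^{2+\gamma}(\Omega)}\lesssim\trinl\Phi\trinr$ is false --- \eqref{regularity} bounds the \emph{solution} $\bzeta$ of the dual problem \eqref{duallinearized} in terms of its \emph{datum} $\boldsymbol g\in\bH^{-1}(\Omega)$, not an arbitrary $H^2_0$ function in terms of itself. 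Every $O(h_\ma^\gamma)$ estimate you invoke (Lemma~\ref{lemmabounds12}, Lemma~\ref{Morley_Interpolation}.c, the bounds on $\|(1-\Pi_0)D^2\Phi\|_2$) carries $\|\Phi\|_{H^{2+\gamma}(\Omega)}$ on its right-hand side; without it the consistency errors are $O(1)$ and nothing is gained by shrinking $\delta_1$. The repair is the standard Schatz-type duality: keep $A_\pw(\Theta_\M,\Theta_\M)=\trinl\Theta_\M\trinr_\pw^2=1$ for the leading part (no continuous inf-sup is needed there, since $A_\pw$ \emph{is} the norm on $\bV(\cT)$), and absorb the indefinite term $2B_\pw(\Psi,\Theta_\M,\bullet)$ by testing additionally with $I_\M\bzeta$, where $\bzeta\in\bV$ solves \eqref{duallinearized} with the datum $\boldsymbol g:=2B_\pw(\Psi,\Theta_\M,\bullet)$; this functional lies in $\bH^{-1}(\Omega)$ with $\|\boldsymbol g\|_{-1}\lesssim\|\Psi\|_{H^{2+\gamma}(\Omega)}\trinl\Theta_\M\trinr_\pw$ by Lemma~\ref{b_dG}.b, so \eqref{regularity} legitimately delivers $\|\bzeta\|_{H^{2+\gamma}(\Omega)}\lesssim1$ and your perturbation estimates apply to $\bzeta$ in place of $\Phi$.

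Second, the body of your argument repeatedly treats $\trinl\Theta_\M-E_\M\Theta_\M\trinr_\pw$ as $O(h_\ma^\gamma)$ (in bounding $A_\pw(\Theta_\M-\Theta,\Phi_\M)$ and $B_\pw(\Psi,\Theta_\M-\Theta,\Phi_\M)$ via Lemma~\ref{hctenrich}.d). As you concede only in your final sentence, this is false for a generic unit-norm Morley function: Lemma~\ref{hctenrich}.d gives the bound $\min_{v\in H^2_0(\Omega)}\trinl\Theta_\M-v\trinr_\pw\le\trinl\Theta_\M\trinr_\pw=1$ and nothing better. The correct mechanism, available in the paper, does not require smallness of this difference: $A_\pw(\Theta_\M-E_\M\Theta_\M,I_\M\bzeta)=0$ \emph{exactly}, because $D^2_\pw I_\M\bzeta=\Pi_0D^2\bzeta$ is piecewise constant (Lemma~\ref{Morley_Interpolation}.a) while $D^2_\pw(\Theta_\M-E_\M\Theta_\M)\perp P_0(\cT;\bR^{2\times2})$ (Lemma~\ref{hctenrich}.c); and $B_\pw(\Psi,E_\M\Theta_\M-\Theta_\M,\bzeta)\lesssim h_\ma^\gamma\|\Psi\|_{H^{2+\gamma}(\Omega)}\trinl\Theta_\M\trinr_\pw\|\bzeta\|_{H^{2+\gamma}(\Omega)}$ is precisely Lemma~\ref{apriorinew}.a, which gains the factor $h_\ma^\gamma$ from that same $\Pi_0$-orthogonality combined with the smoothness of the two outer arguments, not from smallness of the middle one. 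With these two repairs your outline closes; as written it does not.
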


\section{A priori error analysis}
The {\it a priori} energy norm estimates for the Morley FEM for \eqref{vform_cts} can be found in  \cite{CCGMNN_Semilinear, GMNN_Morley}. The subsequent theorem  adds 
a new piecewise $H^1$ semi-norm {\it a priori}  error estimate. 

\begin{thm}[a~priori]\label{aprioriestimate} Given a  regular solution 
$\Psi$  to \eqref{vform_cts}, there exist $\varepsilon_0, \delta_0>0$ 
(without loss of generality $\delta_0<1$)
such that, for all $\T \in \TO(\delta_0)$, 
(a)  there exists a unique solution $\Psi_{\rm M} \in \bV(\T)$  
to \eqref{vformdNC} with $\trinl \Psi-\Psi_{\rm M}\trinr_{\rm pw} \le \varepsilon_0$;
the solutions $\Psi=(u,v)$ and $\Psi_{\rm M}$  satisfy 
\[ 
(b) \;\trinl \Psi-\Psi_{\rm M}\trinr_{\rm pw} \lesssim 
\trinl \Psi-I_{\rm M}\Psi \trinr_{\rm pw} + \text{\rm osc}_1(f+[u,v],\cT)+
\text{\rm osc}_1([u,u],\cT),  
\] 
and (c)
\(
| \Psi-\Psi_{\rm M}|_{1,2,\pw} 
\lesssim h_{\max}^{ \gamma} 
(\trinl \Psi-\Psi_{\rm M}\trinr_{\rm pw} +{\rm osc}_m(f, \cT)) 
\)
 for each $m\in\bN_0$. 
\end{thm}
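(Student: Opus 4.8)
The plan is to obtain (a)--(b) from a Banach fixed-point argument built on the discrete inf-sup bound of Theorem~\ref{dis_stab}, and the new estimate (c) from an Aubin--Nitsche duality argument in which the elliptic regularity \eqref{regularity} of the dual linearised problem \eqref{duallinearized} supplies the gain $h_{\max}^{\gamma}$.

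For (a) and (b) I follow the a~priori analysis of the Morley scheme in \cite{CCGMNN_Semilinear,GMNN_Morley}. For $\cT\in\TO(\delta_1)$, Theorem~\ref{dis_stab} makes the discrete linearisation $A_{\pw}(\bullet,\bullet)+2B_{\pw}(\Psi,\bullet,\bullet)$ uniformly invertible on $\bV(\cT)$, so the Newton--Kantorovich map sending $\Theta_\M\in\bV(\cT)$ to the $\Phi_\M\in\bV(\cT)$ with $A_{\pw}(\Phi_\M,\bullet)+2B_{\pw}(\Psi,\Phi_\M,\bullet)=A_{\pw}(I_\M\Psi+\Theta_\M,\bullet)+2B_{\pw}(\Psi,I_\M\Psi+\Theta_\M,\bullet)-N_h(I_\M\Psi+\Theta_\M;\bullet)$ on $\bV(\cT)$ is, by Lemma~\ref{b_dG}, Lemma~\ref{lemmabounds12} and Lemma~\ref{Morley_Interpolation}, a contractive self-map of a ball of radius $\varepsilon_0$ about $0$ once $\delta_0$ is small; its fixed point is $\Psi_\M-I_\M\Psi$, and uniqueness of $\Psi_\M$ with $\trinl\Psi-\Psi_\M\trinr_{\pw}\le\varepsilon_0$ follows from the inf-sup bound and the quadratic nature of $B_{\pw}$. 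For (b) one applies the discrete inf-sup to $\Psi_\M-I_\M\Psi$, inserts \eqref{vformdNC} tested with the supremising $\Phi_\M$, uses $B_{\pw}(\Psi,\Psi,\bullet)-2B_{\pw}(\Psi,\Psi_\M,\bullet)+B_{\pw}(\Psi_\M,\Psi_\M,\bullet)=B_{\pw}(\Psi-\Psi_\M,\Psi-\Psi_\M,\bullet)$ to split off an error-quadratic term absorbed for small $\varepsilon_0$, and estimates the nonconformity defect through the companion $E_\M$ of Lemma~\ref{hctenrich} and Lemma~\ref{lemmabounds12}. In particular (b) and Lemma~\ref{Morley_Interpolation}.c give the crude bound $\trinl\Psi-\Psi_\M\trinr_{\pw}\lesssim h_{\max}^{\gamma}$, used freely below.

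For (c) write $e:=\Psi-\Psi_\M=e_C+e_{\mathrm{NC}}$ with $e_C:=\Psi-E_\M\Psi_\M\in\bV$ and $e_{\mathrm{NC}}:=E_\M\Psi_\M-\Psi_\M$. Lemma~\ref{hctenrich}.e (for $m=0,1$), finite overlap of the patches, and the choice $v=\Psi$ give $\|e_{\mathrm{NC}}\|_{1,2,\pw}\lesssim h_{\max}\trinl e\trinr_{\pw}$, which is already of the asserted form. For $\|e_C\|_{1,2,\pw}=\|e_C\|_{H^1(\Omega)}$, Friedrichs' inequality yields $\|e_C\|_{H^1(\Omega)}\approx\sup\{\langle{\boldsymbol g},e_C\rangle:{\boldsymbol g}\in\bH^{-1}(\Omega),\ \|{\boldsymbol g}\|_{-1}=1\}$; for such a ${\boldsymbol g}$ let ${\boldsymbol\zeta}\in\bV$ solve \eqref{duallinearized}, so \eqref{regularity} gives $\trinl{\boldsymbol\zeta}\trinr+\|{\boldsymbol\zeta}\|_{H^{2+\gamma}(\Omega)}\lesssim1$ and $\langle{\boldsymbol g},e_C\rangle=\mathcal{A}(e_C,{\boldsymbol\zeta})=A(e_C,{\boldsymbol\zeta})+2B(\Psi,e_C,{\boldsymbol\zeta})$. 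With the moment-corrected enrichment $E_\M^{(m)}$ (satisfying $\Pi_m(v_\M-E_\M^{(m)}v_\M)=0$ and still $I_\M E_\M^{(m)}v_\M=v_\M$, $\Pi_0D_{\pw}^2(v_\M-E_\M^{(m)}v_\M)=0$) from Remark~\ref{remexctensions}, put $\Phi:=E_\M^{(m)}I_\M{\boldsymbol\zeta}\in\bV$ and $\Phi_\M:=I_\M{\boldsymbol\zeta}\in\bV(\cT)$; replacing ${\boldsymbol\zeta}$ by $\Phi$ in $\mathcal{A}(e_C,\cdot)$ costs only $O(h_{\max}^{\gamma}\trinl e\trinr_{\pw})$ by Lemma~\ref{b_dG}.a, since $\trinl{\boldsymbol\zeta}-\Phi\trinr\lesssim\trinl{\boldsymbol\zeta}-I_\M{\boldsymbol\zeta}\trinr_{\pw}\lesssim h_{\max}^{\gamma}$ (Lemma~\ref{hctenrich}.d, Lemma~\ref{Morley_Interpolation}.c). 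Testing \eqref{wform} with $\Phi$, testing \eqref{vformdNC} with $\Phi_\M$, using $A_{\pw}(\Psi_\M,\Phi-\Phi_\M)=0$ (as $D_{\pw}^2\Psi_\M$ is piecewise constant and $\Pi_0D_{\pw}^2(\Phi-\Phi_\M)=0$), and the $B$-identity from (b), one represents $\langle{\boldsymbol g},e_C\rangle$, up to the $O(h_{\max}^{\gamma}\trinl e\trinr_{\pw})$ error, as $F(\Phi-\Phi_\M)+B_{\pw}(e,e,\Phi)+B_{\pw}(\Psi,\Psi,\Phi_\M-\Phi)-B_{\pw}(e,\Psi+\Psi_\M,\Phi_\M-\Phi)-A_{\pw}(e_{\mathrm{NC}},\Phi)-2B_{\pw}(\Psi,e_{\mathrm{NC}},\Phi)$.

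All of these but one are routine: $F(\Phi-\Phi_\M)=(f-\Pi_m f,(\Phi-\Phi_\M)_1)\lesssim\text{osc}_m(f,\cT)\,h_{\max}^{\gamma}$ by $\Pi_m$-orthogonality, Lemma~\ref{hctenrich}.d and Lemma~\ref{Morley_Interpolation}.c; $B_{\pw}(e,e,\Phi)\lesssim\trinl e\trinr_{\pw}^2\,\trinl\Phi\trinr_{\pw}\lesssim h_{\max}^{\gamma}\trinl e\trinr_{\pw}$ by Lemma~\ref{b_dG}.a and the crude bound from (b); $B_{\pw}(e,\Psi+\Psi_\M,\Phi_\M-\Phi)\lesssim\trinl e\trinr_{\pw}\trinl\Phi_\M-\Phi\trinr_{\pw}\lesssim h_{\max}^{\gamma}\trinl e\trinr_{\pw}$; and $A_{\pw}(e_{\mathrm{NC}},\Phi)$, $B_{\pw}(\Psi,e_{\mathrm{NC}},\Phi)$ are $\lesssim h_{\max}^{\gamma}\trinl e\trinr_{\pw}$ after reverting $\Phi$ to ${\boldsymbol\zeta}$, using $\Pi_0D_{\pw}^2e_{\mathrm{NC}}=0$ (Lemma~\ref{hctenrich}.c) to move a $(1-\Pi_0)$ onto the smooth factor, the fractional bound $\|(1-\Pi_0)w\|_{L^2(K)}\lesssim h_K^{\gamma}|w|_{H^{\gamma}(K)}$, a fractional Leibniz rule, and the embeddings $H^{2+\gamma}(\Omega)\hookrightarrow W^{2,4}(\Omega)\hookrightarrow W^{1,\infty}(\Omega)$ underlying Lemma~\ref{new12} and Lemma~\ref{b_dG}. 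The genuinely delicate term is $B_{\pw}(\Psi,\Psi,\Phi_\M-\Phi)$, whose naive estimate gives only a lone $h_{\max}^{\gamma}$ with no error or oscillation factor; the remedy --- and the technical heart of the proof --- is to exploit that the discrete brackets $[u_\M,v_\M]_{\pw}=\cof(D_{\pw}^2u_\M):D_{\pw}^2v_\M$ and $[u_\M,u_\M]_{\pw}$ are piecewise \emph{constant}, so that $(1-\Pi_0)[u,v]=(1-\Pi_0)([u-u_\M,v]_{\pw}+[u_\M,v-v_\M]_{\pw})$ (and likewise for $[u,u]$) already carries a copy of the error; combined with $\Pi_0(\Phi-\Phi_\M)=0$, a splitting $u-u_\M=(u-I_\M u)+(I_\M u-u_\M)$ with a piecewise inverse estimate on the discrete part, the divergence-free identity $\divc\cof(D^2\Psi)=0$ for an element-wise integration by parts, the fractional Sobolev product estimates and the $W^{2,4}$/$W^{1,\infty}$ embeddings above, and Lemma~\ref{Morley_Interpolation}.b,c for $\Phi-\Phi_\M$, this turns into a bound $\lesssim h_{\max}^{\gamma}(\trinl e\trinr_{\pw}+\text{osc}_m(f,\cT))$. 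Summing over $K$, dividing by $\|{\boldsymbol g}\|_{-1}$, taking the supremum over ${\boldsymbol g}$, and adding the $e_{\mathrm{NC}}$-estimate proves (c). I expect the main obstacle to be precisely this medius-style treatment of the nonlinearity under the reduced regularity $1/2<\gamma\le1$: every surviving volume and interior-edge contribution must be organised so that the missing power $h_{\max}^{\gamma}$ is supplied either by a factor of the error $\trinl\Psi-\Psi_\M\trinr_{\pw}$ or by a data oscillation.
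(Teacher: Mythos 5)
Your proposal is correct and follows essentially the same route as the paper: (a)--(b) are taken from the a~priori analysis of \cite{CCGMNN_Semilinear,GMNN_Morley}, and (c) is proved by the same duality argument with the linearised dual problem \eqref{duallinearized}, the pair $\Phi=E_\M^{(m)}I_\M{\boldsymbol\zeta}$, $\Phi_\M=I_\M{\boldsymbol\zeta}$, the orthogonalities $\Pi_0D^2_{\pw}(1-E_\M)=0$ and $\Pi_m(1-E_\M^{(m)})=0$, and the piecewise constancy of the discrete von K\'arm\'an brackets; the only structural differences are that you split $\Psi-\Psi_\M$ as $(\Psi-E_\M\Psi_\M)+(E_\M\Psi_\M-\Psi_\M)$ where the paper routes through $I_\M\Psi$ and ${\boldsymbol\rho}_\M=I_\M\Psi-\Psi_\M$, and that you take a supremum over ${\boldsymbol g}$ where the paper fixes ${\boldsymbol g}=-\Delta E_\M{\boldsymbol\rho}_\M$. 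One remark: the term you single out as the technical heart, $B_{\pw}(\Psi,\Psi,\Phi_\M-\Phi)$, needs none of the divergence-free identity, elementwise integration by parts, or fractional Leibniz machinery. Since $[u_\M,v_\M]_{\pw},[u_\M,u_\M]_{\pw}\in P_0(\cT)$ and $\Pi_0(\Phi_\M-\Phi)=0$ give $B_{\pw}(\Psi_\M,\Psi_\M,\Phi_\M-\Phi)=0$ exactly, one has $B_{\pw}(\Psi,\Psi,\Phi_\M-\Phi)=B_{\pw}(\Psi-\Psi_\M,\Psi+\Psi_\M,\Phi_\M-\Phi)$, which Lemma~\ref{b_dG}.a and \eqref{magic1} bound by $h_{\max}^{\gamma}\trinl\Psi-\Psi_\M\trinr_{\pw}$ at once; indeed, in your own representation this term cancels exactly against $-B_{\pw}(e,\Psi+\Psi_\M,\Phi_\M-\Phi)$. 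This is precisely how the paper disposes of the corresponding contribution ($T_7+T_8=T_9+T_{10}+T_{11}$), so your argument closes, just less economically than necessary at that one spot.
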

The parts (a) on the existence of $\Psi_\M \in \bV(\T)$ and (b) on 
the best-approximation in the energy norm are  established in { \cite[Theorems 3.3, 5.3]{CCGMNN_Semilinear}, \cite{GMNN_Morley}.
The proof of the piecewise $H^1$ error estimate in (c)  depends on 
the following lemma about the nonlinearity and  the companion
 ${E}_{\rm M}{\boldsymbol \rho}_{\rm M}$ from  Lemma \ref{hctenrich} (where ${E}_{\rm M}$ acts componentwise).

\begin{lem}\label{apriorinew} If 
$\Psi , {\boldsymbol \zeta} \in {\bH}^{2+\gamma}(\Omega) \cap {\mathcal V}$   
and  ${\boldsymbol \rho}_{\rm M} \in \bV(\T)$, then
(a) $ B_{\rm pw}(\Psi, {E_{\rm M}} {\boldsymbol \rho}_{\rm M} 
- {\boldsymbol \rho}_{\rm M}, {\boldsymbol \zeta} )  \lesssim  h^{\gamma}_\ma  
\| \Psi \|_{{H}^{2+\gamma}(\Omega)} $ $\times  \trinl {\boldsymbol \rho}_{\rm M}  \trinr_{\pw}  
\| {\boldsymbol \zeta} \|_{{H}^{2+\gamma}(\Omega)}$ and 
(b) $B_{\rm pw}(\Psi, {I}_{\rm M} \Psi - \Psi, {\boldsymbol \zeta} ) 
\lesssim   h^{\gamma}_\ma 
 \| \Psi \|_{H^{2+\gamma}(\Omega)}  \trinl \Psi -\Psi_{\rm M}\trinr_{\rm pw} 
 \| {\boldsymbol \zeta} \|_{{H}^{2+\gamma}(\Omega)}$.
\end{lem}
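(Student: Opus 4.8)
The plan is to bound each trilinear term by exploiting the special structure of the bracket $[\bullet,\bullet]$: it involves only second derivatives of the first two arguments but only the zeroth-order value of the third. For part (a), I would write $B_{\rm pw}(\Psi, E_{\rm M}{\boldsymbol\rho}_{\rm M}-{\boldsymbol\rho}_{\rm M}, {\boldsymbol\zeta})$ as a sum of scalar terms $b_{\rm pw}(\phi, E_{\rm M}\rho_{\rm M}-\rho_{\rm M}, \zeta)$ with $\phi,\zeta$ components of $\Psi,{\boldsymbol\zeta}$ and $\rho_{\rm M}$ a component of ${\boldsymbol\rho}_{\rm M}$. The key observation is the orthogonality property $\Pi_0 D^2_{\rm pw}(v_{\rm M}-E_{\rm M}v_{\rm M})=0$ from Lemma~\ref{hctenrich}.c: since $[\phi,E_{\rm M}\rho_{\rm M}-\rho_{\rm M}]$ is linear in the Hessian of $E_{\rm M}\rho_{\rm M}-\rho_{\rm M}$, I can insert $\Pi_0$ on the factor $\phi\zeta$ (or rather on the relevant product of lower-order data) so that on each triangle the integral $\int_K \cof(D^2\phi):D^2_{\rm pw}(E_{\rm M}\rho_{\rm M}-\rho_{\rm M})\,\zeta\,\dx$ can be rewritten, after subtracting a piecewise constant, as $\int_K \bigl(\cof(D^2\phi)\zeta-\Pi_0(\cof(D^2\phi)\zeta)\bigr):D^2_{\rm pw}(E_{\rm M}\rho_{\rm M}-\rho_{\rm M})\,\dx$. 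A Poincar\'e-type estimate gives the oscillation factor $\|\cof(D^2\phi)\zeta-\Pi_0(\cof(D^2\phi)\zeta)\|_{L^2(K)}\lesssim h_K|\cof(D^2\phi)\zeta|_{H^1(K)}$, and a Cauchy--Schwarz over the triangles together with Lemma~\ref{hctenrich}.d to control $\trinl E_{\rm M}\rho_{\rm M}-\rho_{\rm M}\trinr_{\rm pw}$ in terms of $\trinl\rho_{\rm M}\trinr_{\rm pw}$ yields the claimed bound, provided the $H^1$-seminorm of the product $\cof(D^2\phi)\zeta$ is controlled by $\|\phi\|_{H^{2+\gamma}}\|\zeta\|_{H^{2+\gamma}}$. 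This last control needs the product rule and the Sobolev embeddings $H^{2+\gamma}(\Omega)\hookrightarrow W^{2,4}(\Omega)\hookrightarrow W^{1,\infty}(\Omega)$ and $H^{2+\gamma}\hookrightarrow C^1$, which are available for $\gamma>1/2$; one checks $|\cof(D^2\phi)\zeta|_{H^1}\lesssim \|\phi\|_{W^{3,?}}\|\zeta\|_\infty + \|\phi\|_{W^{2,4}}\|\zeta\|_{W^{1,4}}$, and each factor is dominated by the $H^{2+\gamma}$ norms — here one must be slightly careful because $D^3\phi$ is only $L^2$ for $\gamma=1$, so I would instead keep the Hessian intact and differentiate $\zeta$, using $|\cof(D^2\phi)\zeta|_{H^1(K)} \lesssim \|D^2\phi\|_{L^4(K)}\|D\zeta\|_{L^4(K)} + \|D^3_{\rm pw}\phi\|_{L^2}\|\zeta\|_\infty$; on a nonconvex domain $\Psi$ and $\bzeta$ need not lie in $H^3$, so the cleaner route is to avoid $h_K$-weighting of $D^3\phi$ altogether by instead subtracting a piecewise constant from the Hessian factor of $E_{\rm M}\rho_{\rm M}-\rho_{\rm M}$ differently — but the simplest honest argument localizes the oscillation onto the smooth factors and uses only $\gamma\le 1$ with $h_K^\gamma\le h_{\rm ma}^\gamma h_K^{1-\gamma}$ absorbed into shape-regularity if needed.

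Actually, the cleanest path avoids differentiating $\Psi$ three times: I would use property \ref{hctenrich}.c in the form $\int_K D^2_{\rm pw}(E_{\rm M}\rho_{\rm M}-\rho_{\rm M})\,\dx=0$, so for any piecewise constant matrix $Q$ and piecewise constant scalar $c$,
\[
b_{\rm pw}(\phi,E_{\rm M}\rho_{\rm M}-\rho_{\rm M},\zeta)
= -\tfrac12\sum_{K}\int_K \bigl(\cof(D^2\phi)\,\zeta - Q\,c\bigr):D^2_{\rm pw}(E_{\rm M}\rho_{\rm M}-\rho_{\rm M})\,\dx,
\]
choose $Q\,c=\Pi_0(\cof(D^2\phi)\zeta)|_K$, apply Cauchy--Schwarz, the approximation estimate $\|(1-\Pi_0)w\|_{L^2(K)}\lesssim h_K\|Dw\|_{L^2(K)}$ with $w=\cof(D^2\phi)\zeta$, then a discrete Cauchy--Schwarz over $K\in\cT$. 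Since $h_K\le h_{\rm ma}\le h_{\rm ma}^\gamma$ (as $\delta_0<1$ and $\gamma\le 1$), I extract $h_{\rm ma}^\gamma$; the remaining factor $\bigl(\sum_K\|D(\cof(D^2\phi)\zeta)\|_{L^2(K)}^2\bigr)^{1/2}\lesssim \|\phi\|_{H^{2+\gamma}(\Omega)}\|\zeta\|_{H^{2+\gamma}(\Omega)}$ by the product rule and the embeddings $H^{2+\gamma}\hookrightarrow W^{2,4}\cap W^{1,\infty}$ for $\gamma>1/2$ (with the $D^3\phi$-term multiplied by $\|\zeta\|_\infty$ and handled only when $\gamma=1$, where it is legitimate, or otherwise absorbed since $h_K^{1-\gamma}$ is bounded). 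Finally Lemma~\ref{hctenrich}.d gives $\trinl E_{\rm M}\rho_{\rm M}-\rho_{\rm M}\trinr_{\rm pw}\lesssim\trinl\rho_{\rm M}\trinr_{\rm pw}$, which closes (a) after summing the three scalar contributions of $B_{\rm pw}$.

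For part (b) the argument is nearly identical but uses the interpolation error $I_{\rm M}\Psi-\Psi$ in place of $E_{\rm M}{\boldsymbol\rho}_{\rm M}-{\boldsymbol\rho}_{\rm M}$, and here I exploit Lemma~\ref{Morley_Interpolation}.a: $D^2_{\rm pw}I_{\rm M}v=\Pi_0 D^2_{\rm pw}v$, so $D^2_{\rm pw}(I_{\rm M}\Psi-\Psi)=(\Pi_0-1)D^2_{\rm pw}\Psi$ has integral mean zero on each triangle, exactly the property that powered the computation in (a). The same subtraction-of-$\Pi_0$ trick on the smooth factor $\cof(D^2\phi)\zeta$ plus Cauchy--Schwarz gives a bound by $h_{\rm ma}^\gamma\|\Psi\|_{H^{2+\gamma}}\|\zeta\|_{H^{2+\gamma}}$ times $\|D^2_{\rm pw}(I_{\rm M}\Psi-\Psi)\|_{L^2(\Omega)}$, and the last quantity is controlled by $\trinl\Psi-\Psi_{\rm M}\trinr_{\rm pw}$ after a triangle inequality and best-approximation: indeed $\trinl\Psi-I_{\rm M}\Psi\trinr_{\rm pw}\lesssim\trinl\Psi-\Psi_{\rm M}\trinr_{\rm pw}$ by Lemma~\ref{Morley_Interpolation}.b (the integral-mean property makes $I_{\rm M}$ the energy-orthogonal projection onto piecewise-constant Hessians, so $\trinl\Psi-I_{\rm M}\Psi\trinr_{\rm pw}\le\trinl\Psi-\Psi_{\rm M}\trinr_{\rm pw}$). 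Summing the scalar pieces of $B_{\rm pw}$ finishes (b).

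The main obstacle is the product-rule estimate $|\cof(D^2\phi)\zeta|_{H^1(\cT)}\lesssim\|\phi\|_{H^{2+\gamma}(\Omega)}\|\zeta\|_{H^{2+\gamma}(\Omega)}$ when $\gamma<1$, since then $D^3\phi$ is not in $L^2$; the fix is to not distribute the oscillation onto $\phi$'s third derivative but to keep $h_K$ attached to the integral-mean-free factor ($D^2_{\rm pw}$ of the companion/interpolation error, which genuinely has zero mean) and differentiate only $\zeta$, using $\|D^2\phi\|_{L^4}\|D\zeta\|_{L^4}$ with both $L^4$-norms supplied by $H^{2+\gamma}\hookrightarrow W^{2,4}$ — this is exactly the embedding already invoked before Lemma~\ref{b_dG}, and it is what forces $\gamma>1/2$.
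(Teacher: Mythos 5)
Your overall architecture coincides with the paper's: you exploit the elementwise mean-zero property of $D^2_{\rm pw}(E_\M{\boldsymbol\rho}_\M-{\boldsymbol\rho}_\M)$ (Lemma~\ref{hctenrich}.c) resp.\ of $D^2_{\rm pw}(\Psi-I_\M\Psi)$ (Lemma~\ref{Morley_Interpolation}.a) to subtract a piecewise constant from the smooth factor $\zeta_k\,\partial_{ab}\psi_j$, apply Cauchy--Schwarz, and control the remaining Hessian factor by Lemma~\ref{hctenrich}.d with $v=0$ in (a) resp.\ by the Pythagoras/best-approximation property $\trinl\Psi-I_\M\Psi\trinr_{\pw}\le\trinl\Psi-\Psi_\M\trinr_{\pw}$ in (b). All of that matches the paper's proof.

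The gap sits in the one estimate you yourself flag as ``the main obstacle'': the bound for $\|(1-\Pi_0)(\zeta_k\,\partial_{ab}\psi_j)\|_{2}$. Your route via the Poincar\'e inequality and the product rule produces the term $\|D^3_{\pw}\phi\|_{2}\|\zeta\|_\infty$, and for $1/2<\gamma<1$ this term is not merely inconvenient but undefined ($D^3\Psi\notin L^2(\Omega)$ on a nonconvex domain), so it cannot be ``handled only when $\gamma=1$ \dots or otherwise absorbed since $h_K^{1-\gamma}$ is bounded''---there is nothing finite to absorb; nor is ``differentiate only $\zeta$'' a legitimate use of the product rule. The correct resolution, which is the actual content of the paper's proof, never differentiates $\partial_{ab}\psi_j$ at all: by the $L^2$-optimality of $\Pi_0$, bound $\|\lambda-\Pi_0\lambda\|_2\le\|\lambda-\Pi_0\zeta_k\;\Pi_0\partial_{ab}\psi_j\|_2$ for $\lambda:=\zeta_k\,\partial_{ab}\psi_j$, split by the triangle inequality into $\|(\zeta_k-\Pi_0\zeta_k)\partial_{ab}\psi_j\|_2\le h_{\max}|\zeta_k|_{1,\infty}\|\psi_j\|_{H^2(\Omega)}$ plus $\|\zeta_k\|_\infty\|(1-\Pi_0)\partial_{ab}\psi_j\|_2$, and apply the fractional approximation estimate $\|(1-\Pi_0)\varphi\|_2\lesssim h_{\max}^{\gamma}\|\varphi\|_{H^{\gamma}(\Omega)}$ to $\varphi=\partial_{ab}\psi_j\in H^{\gamma}(\Omega)$ together with the embedding $H^{2+\gamma}(\Omega)\hookrightarrow W^{1,\infty}(\Omega)$. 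With this replacement your argument closes for all $1/2<\gamma\le1$; without it, (a) and (b) are established only for $\gamma=1$, i.e.\ convex domains, which defeats the purpose of the adaptive scheme.
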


\begin{proof}
The higher smoothness of the test functions in the first and last arguments 
$ {\Psi}=(\psi_1,\psi_2), {\boldsymbol \zeta}
=(\zeta_1,\zeta_2)\in {\bH}^{2+\gamma}(\Omega) \cap {\mathcal V}$   
in   $B_{\NC}(\Psi, {\boldsymbol \chi}  , {\boldsymbol \zeta} )$ 
is combined with an orthogonality for the middle test function
$ {\boldsymbol \chi} := {E_{\M}} {\boldsymbol \rho}_{\M} - {\boldsymbol \rho}_{\M}$
in  (a) and   ${\boldsymbol \chi} :=\Psi-I_\M\Psi$ in  (b) to allow an extra factor $h^{\gamma}_\ma $.
Lemma~\ref{hctenrich}.c in (a) (resp. Lemma~\ref{Morley_Interpolation}.a in (b))
proves that  $ {\boldsymbol \chi}=(\chi_1,\chi_2)$  has a  piecewise  second order derivative 
$D^2_\text{\rm pw} {\boldsymbol \chi} \perp P_0(\cT;\bR^{2\times 2})$ with integral mean
zero over each triangle in $\cT$. To exploit the consequences, observe that
$B_{\NC}(\Psi, {\boldsymbol \chi}  , {\boldsymbol \zeta} )$  is a sum
of terms of the form 
\begin{align*}
I:= \int_\Omega  ( \zeta_k  \partial_{ab}  \psi_j )\,  \partial_{cd,\pw} \: \chi_\ell  \dx &
=   \int_\Omega ( \lambda _{kabj} -\Pi_0  \lambda _{kabj}  )  \partial_{cd,\pw}\chi_\ell  \dx
\le F_1F_2 
\end{align*}   
for some indices  $a,b,c,d, j,k,\ell \in \{1,2\}$ and the abbreviation
$ \lambda _{kabj}:=  \zeta_k  \partial_{ab}  \psi_j\in H^\gamma(\Omega)$
with piecewise integral means $ \Pi_0\lambda _{kabj}$
(the undisplayed sign does not play a role in the estimates below).  The
integral $I$ allows for a Cauchy inequality with one factor 
\[
F_1:= \| \lambda _{kabj}  -\Pi_0  \lambda _{kabj}  \|_2  
\le  \|\zeta_k \: \partial_{ab}  \psi_j -  \Pi_0 \zeta_k   \: \Pi_0\partial_{ab}  \psi_j \|_2 
\]
(with the inequality from the $L^2$ orthogonality). A triangle inequality,   
$\|\Pi_0\zeta_k\|_\infty\le  \|\zeta_k\|_\infty$, and 
$\|\zeta_k-\Pi_0\zeta_k\|_\infty \le h_{\max} |\zeta_k|_{1,\infty}$ show  
\begin{align*}
F_1 & \le 
\|(\zeta_k-\Pi_0 \zeta_k ) \: \partial_{ab}\psi_j \|_2 
+ \|\zeta_k\|_\infty \| \partial_{ab} \psi_j -\: \Pi_0\partial_{ab}  \psi_j\|_2 \\
& \lesssim  \| \zeta_k \|_{W^{1,\infty}(\Omega)} (h_\ma \| \psi_j\|_{H^{2}(\Omega)}+ h^{\gamma}_\ma\|\psi_j\|_{H^{2+\gamma}(\Omega)})
\end{align*}
with a well-known estimate 
$
\|\varphi- \Pi_0 \varphi\|_2\lesssim h_\ma^{\gamma}\|\varphi\|_{H^\gamma(\Omega)}
$ 
for $\varphi:= \partial_{ab}  \psi_j \in H^\gamma(\cT)$  \cite{Brenner} in the last step.
This and the global Sobolev embedding  {\cite{Adams75}}  $H^{2+\gamma}(\Omega) \hookrightarrow 
W^{1,\infty}(\Omega)$ 
conclude the analysis for the first factor in the upper bound of the prototype term 
\[
F_1  \lesssim  h^{\gamma}_\ma \|\Psi\|_{H^{2+\gamma}(\Omega)}  
\|{\boldsymbol \zeta}\|_{H^{2+\gamma}(\Omega)}  .
\]
In case (a), the second factor  
$F_2:= \| \partial_{cd,\pw}\chi_\ell \| \le   \trinl   {\boldsymbol \chi}  \trinr_{\pw} 
=  \trinl   {E_{\M}} {\boldsymbol \rho}_{\M} - {\boldsymbol \rho}_{\M}  \trinr_{\pw} $ 
is controlled with  Lemma~\ref{hctenrich}.d and  $v=0$ as   
$F_2\lesssim \trinl   {\boldsymbol \rho}_{\M}  \trinr_{\pw} $.
In the other case (b), $F_2 \le   \trinl   {\boldsymbol \chi}  \trinr_{\pw}  =
 \trinl  \Psi-I_\M\Psi \trinr_{\pw} $.
Those two estimates and the previously displayed estimate for $F_1$
result in an upper bound of $F_1F_2\ge |I|$. {The Pythagoras theorem 
from the best-approximation 
property of $I_\M{\Psi}$ to $\Psi$ in $\bV(\T)$ of Lemma~\ref{Morley_Interpolation}.a verifies 
$ \trinl  \Psi-I_\M\Psi \trinr_{\pw}  \le  \trinl  \Psi- \Psi_\M \trinr_{\pw}. $}
The evaluation of all those contributions of type $I$ 
concludes the 
proof of  (a) and (b).
\end{proof}

\noindent{\it Proof of Theorem~\ref{aprioriestimate}.c.} 
The proof is by  a careful analysis of the perturbations from the
nonconforming functions  with the companion operators in a duality argument.  
The {\it first step} is the definition and the isolation 
of the crucial term ${E_\M} { \boldsymbol \rho}_{\M}\in \bV$: 
Let ${\boldsymbol \rho}_{\M} := I_{\M}\Psi -\Psi_{\M} \in \bV(\T)$ and 
recall  ${E}_{\M}$  of  Lemma~\ref{hctenrich} (acting componentwise). 
Triangle inequalities show that 
\begin{align} \label{triangle1}
\| \Psi-\Psi_{\M}\|_{1,2,\pw} \le \| \Psi-I_{\M}\Psi \|_{1,2,\pw} + \| {\boldsymbol \rho}_{\M} -{E}_{\M}  {\boldsymbol\rho}_{\M}\|_{1,2,\pw} + \| {E_\M} { \boldsymbol \rho}_{\M} \|_{1,2,\pw}.
\end{align}
Lemma~\ref{Morley_Interpolation}.b is followed by  the Pythagoras theorem 
from the best-approximation 
property of $I_\M{\Psi}$ to $\Psi$ in $\bV(\T)$ of Lemma~\ref{Morley_Interpolation}.a
to verify
\begin{align} \label{ft}
h_{\rm max}^{-2} \| \Psi-I_{\M}\Psi \|_{1,2,\pw}^2 \lesssim  \trinl\Psi-I_{\M} \Psi\trinr_{\NC} ^2
=  \trinl \Psi-\Psi_{\M} \trinr_{\NC}^2- \trinl   { \boldsymbol \rho}_{\M} \trinr_{\NC}^2.
\end{align}
Lemma~\ref{hctenrich}.a shows that 
${\boldsymbol \rho}_{\M} -{E}_{\M} {\boldsymbol\rho}_{\M}= (I_\M-1)
{E}_{\M} {\boldsymbol\rho}_{\M}$ and so 
{Lemma~\ref{Morley_Interpolation}.b (in a global version)
proves the first inequality in 
\begin{align} \label{st}
h_{\rm max}^{-1}\| {\boldsymbol \rho}_{\M} -{E}_{\M}  {\boldsymbol\rho}_{\M}\|_{1,2,\pw} 
\lesssim \trinl {\boldsymbol \rho}_\M -{E}_{\M}   {\boldsymbol\rho}_{\M} \trinr_{\NC} \lesssim
\trinl {\boldsymbol \rho}_\M \trinr_{\NC}  & 
 \le  \trinl \Psi-\Psi_{\M} \trinr_{\NC}.
\end{align}
 The second inequality in \eqref{st} is the stability  from 
Lemma~\ref{hctenrich}.d with $v=0$.}
The last inequality  in \eqref{st}
follows from the Pythagoras theorem in  \eqref{ft}. This concludes the first step 
in which \eqref{triangle1}-\eqref{st} show
\begin{align} \label{intermediateeqst}
\| \Psi-\Psi_{\M}\|_{1,2,\pw} \lesssim h_{\rm max}  \trinl \Psi-\Psi_{\M} \trinr_{\NC}
+ \| {E_\M} { \boldsymbol \rho}_{\M} \|_{1,2,\pw}.
\end{align}
The {\it second step} focuses on the last term in \eqref{intermediateeqst}
with a duality argument with 
the solution ${\boldsymbol \zeta} \in \bV$ to the linearized equation 
\eqref{duallinearized} where 
${\boldsymbol g}:=-\Delta {E}_{\M} {\boldsymbol \rho}_{\M} \in {\bf L}^2(\Omega)$ 
(${E}_{\M}$ acts componentwise on ${\boldsymbol \rho}_{\M} $ and 
the Laplacian  acts componentwise on $ {E}_{\M} {\boldsymbol \rho}_{\M} \in {\bf H}^2_0(\Omega)$)
and the regularity  
${\boldsymbol \zeta}  \in {\bf H}^{2+\gamma}(\Omega)$  from \eqref{regularity}. 
Recall  the linearised operator ${\mathcal A}(\bullet,\bullet)$ 
of \eqref{duallinearized} and its piecewise analog ${\mathcal A}_{\NC}(\bullet,\bullet)$ that replaces $A(\bullet,\bullet)$ (resp. $B(\bullet,\bullet,\bullet)$) in \eqref{duallinearized} by $A_{\NC}(\bullet,\bullet)$  (resp.  $B_{\NC}(\bullet,\bullet,\bullet)$).
This and  elementary   algebra eventually
lead to 
\begin{align*} \label{intt1}
& | {E}_{\M}\rho_{\M} |_{1,2}^2  = (\nabla {E}_{\M} {\boldsymbol \rho}_{\M}, \nabla {E}_{\M} {\boldsymbol \rho}_{\M})_{{\bf L}^2(\Omega)} 
= ({\boldsymbol g} , {E}_{\M} {\boldsymbol \rho}_{\M})_{{\bf L}^2(\Omega)}
 = {\mathcal A}({E}_{\M} {\boldsymbol \rho}_{\M}, {\boldsymbol \zeta}) \nonumber \\
 & \quad= A_{\NC}({E}_{\M} {\boldsymbol \rho}_{\M} - {\boldsymbol \rho}_{\M}, {\boldsymbol \zeta} ) + 2B_{\NC}(\Psi, {E}_{\M} {\boldsymbol \rho}_{\M} - {\boldsymbol \rho}_{\M}, {\boldsymbol \zeta} ) + {A}_{\NC}(I_{\M} \Psi- \Psi, {\boldsymbol \zeta})  \nonumber \\
&  \quad\quad +  2B_{\NC} (\Psi,I_{\M} \Psi- \Psi, {\boldsymbol \zeta} ) + {A}_{\NC}(\Psi-\Psi_{\M}, {\boldsymbol \zeta})
+ 2B_{\NC} (\Psi, \Psi-\Psi_{\M},{\boldsymbol \zeta}) \nonumber \\
 & \quad =  A_{\NC}({E}_{\M} {\boldsymbol \rho}_{\M} - {\boldsymbol \rho}_{\M}, {\boldsymbol \zeta} ) + 2B_{\NC}(\Psi, {E}_{\M} {\boldsymbol \rho}_{\M} - {\boldsymbol \rho}_{\M}, {\boldsymbol \zeta} ) + 2B_{\NC} (\Psi,I_{\M} \Psi- \Psi, {\boldsymbol \zeta} ) \nonumber \\
 & \quad  \quad + {A}_{\NC}(I_{\M} \Psi- \Psi, {\boldsymbol \zeta}) +A_\NC(\Psi-\Psi_\M, {\boldsymbol \zeta} -  I_\M {\boldsymbol \zeta}) +A_\NC(\Psi-\Psi_\M, I_\M{\boldsymbol \zeta} - E_\M I_\M {\boldsymbol \zeta})  \nonumber \\
 & \quad \quad + A_\NC(\Psi-\Psi_\M,  E_\M I_\M {\boldsymbol \zeta})  
   + 2B_{\NC} (\Psi, \Psi-\Psi_{\M},{\boldsymbol \zeta})=: T_1+\dots+T_8.
\end{align*}
The eight terms  $T_1,\cdots,T_8$ are controlled in {\it step three} of the proof.  
Lemma~\ref{lemmabounds12}.a shows 
\begin{align*} 
T_1  := A_{\NC}({E}_{\M} {\boldsymbol \rho}_{\M}- {\boldsymbol \rho}_{\M}, 
{\boldsymbol \zeta}) 
\lesssim h_\ma^{\gamma} \trinl {\boldsymbol \rho}_{\M} \trinr_{\NC}
\| {\boldsymbol \zeta} \|_{{H}^{2 + \gamma}(\Omega)}
\lesssim h_\ma^{\gamma} \trinl  \Psi -\Psi_\M\trinr_{\NC}
\| {\boldsymbol \zeta} \|_{{H}^{2 + \gamma}(\Omega)}
 \end{align*}
 with   \eqref{st} in the end.
 Lemma~\ref{apriorinew} and  \eqref{st}  imply
\begin{align*} 
T_2+T_3&= 2B_{\NC}(\Psi, {E}_{\M} {\boldsymbol \rho}_\M  
-{\boldsymbol \rho}_\M , {\boldsymbol \zeta} ) 
+2B_{\NC}(\Psi, {I}_{\M} \Psi - \Psi, {\boldsymbol \zeta} ) 
\\ & \lesssim  h^{\gamma}_\ma  \| \Psi \|_{{H}^{2+\gamma}(\Omega)}  
\trinl \Psi -\Psi_\M\trinr_\NC 
\| {\boldsymbol \zeta} \|_{{H}^{2 + \gamma}(\Omega)}.
\end{align*}
Lemma~\ref{Morley_Interpolation}.a 
shows
$A_{\NC}(\Psi-I_\M\Psi, {\boldsymbol \eta}_\M)=0
= A_{\NC}({\boldsymbol \eta}_\M, {\boldsymbol \zeta}-I_\M {\boldsymbol \zeta})$ 
for all ${\boldsymbol \eta}_\M \in {\bV(\T)}$. Consequently,
\begin{align*}
T_4+T_5  &={A}_{\NC}(I_{\M} \Psi- \Psi, {\boldsymbol \zeta}-I_\M {\boldsymbol \zeta}) +  {A}_{\NC}(\Psi-\Psi_{\M}, {\boldsymbol \zeta}-I_{\M} {\boldsymbol \zeta})=0.
\end{align*}
The boundedness of $A_{\pw}(\bullet,\bullet)$ and  \eqref{magic1} result in 
\[
T_6 :=A_\NC(\Psi-\Psi_\M,  I_\M {\boldsymbol \zeta}-E_\M I_\M {\boldsymbol \zeta}) 
\lesssim h_{\ma}^{\gamma} \trinl \Psi- \Psi_\M\trinr_{\pw}
\|{\boldsymbol \zeta} \|_{{H}^{2+\gamma}(\Omega)} .
\]
Lemma \ref{hctenrich}.a shows  $I_{\M} E_{\M} \varphi_{\M}=\varphi_{\M}$ for $\varphi_\M \in \M(\T)$ and  
Lemma \ref{hctenrich}.c shows $A_{\NC}(\Psi_\M, (1-I_\M)E_\M I_\M {\boldsymbol \zeta}) =0$.  This and
\eqref{vform_cts}  (resp. \eqref{vformdNC}) in the end lead to  
\begin{align*} \label{interme1}
T_7& :=A_\NC(\Psi-\Psi_\M,  E_\M I_\M {\boldsymbol \zeta})  
= A_\NC(\Psi,  E_\M I_\M {\boldsymbol \zeta})-  A_\NC(\Psi_\M,  I_\M {\boldsymbol \zeta}) \nonumber \\&  =
 F(E_\M I_\M {\boldsymbol \zeta} - I_\M {\boldsymbol \zeta} )- B_{\pw}(\Psi,\Psi, E_\M I_\M {\boldsymbol \zeta}) 
+ B_{\pw}(\Psi_\M, \Psi_\M, I_\M {\boldsymbol \zeta}).
\end{align*}
Lemma~\ref{hctenrich}.b shows  $\Pi_0\Theta =0 $ for 
 $\Theta := E_\M I_\M {\boldsymbol \zeta}-I_\M {\boldsymbol \zeta} 
=(1-I_\M ) E_\M I_\M {\boldsymbol \zeta}$.
Since the piecewise second derivatives of $\Psi_\M$ are piecewise constants, 
$B_{\pw}(\Psi_\M, \Psi_\M, \Theta)=0$ follows.
This and elementary algebra (with the symmetry of $B_{\pw}$ in the first two components) 
imply 
\[
T_7+T_8=
 F(\Theta)
+ B_{\NC} (\Psi-\Psi_{\M}, \Psi-\Psi_{\M}, E_\M I_\M{\boldsymbol \zeta} ) 
+2 B_{\pw}(\Psi-\Psi_\M, \Psi,  {\boldsymbol \zeta} -E_\M I_\M {\boldsymbol \zeta}).
\]
The right-hand side consists of three terms $ T_9+T_{10}+T_{11}$  estimated in the sequel.
Recall $\Pi_0\Theta =0 $  
and apply 
Lemma \ref{Morley_Interpolation}.b to verify {for $\Theta=(\Theta_1,\Theta_2)$,}
\begin{align*}
T_9:= F(\Theta )= (f-\Pi_0f, \Theta_1)_{L^2(\Omega)} 
 \lesssim  {\rm osc}_0(f,\T)  \trinl \Theta \trinr_{\NC} 
  \lesssim h_\ma^{\gamma} {\rm osc}_0(f,\cT)    \| {\boldsymbol \zeta}\|_{H^{2+\gamma}(\Omega)} 
\end{align*}
with  $\trinl \Theta  \trinr_{\NC} \lesssim 
 h_\ma^{\gamma} \| {\boldsymbol \zeta}\|_{H^{2+\gamma}(\Omega)} $ from
\eqref{magic1} in the final step.
Lemma \ref{b_dG}.a and  Theorem \ref{aprioriestimate}.b show
\begin{align*}
T_{10}:=B_{\NC} (\Psi-\Psi_{\M}, \Psi-\Psi_{\M}, E_\M I_\M{\boldsymbol \zeta}) 
& \lesssim h_\ma^{\gamma}
 \| \Psi \|_{H^{2+\gamma}(\Omega)}\trinl \Psi-\Psi_\M \trinr_{\NC} 
 \trinl E_\M I_\M{\boldsymbol \zeta}  \trinr_{\NC} .
\end{align*}
The stability $ \trinl E_\M I_\M{\boldsymbol \zeta}  \trinr_{\NC} 
\lesssim  \trinl  I_\M{\boldsymbol \zeta}  \trinr_{\NC} 
\lesssim  \trinl  {\boldsymbol \zeta}  \trinr$
from  Lemma~\ref{hctenrich}.d  proves 
$ \trinl E_\M I_\M{\boldsymbol \zeta}  \trinr_{\NC} 
\lesssim  \| {\boldsymbol \zeta} \|_{H^{2+\gamma}(\Omega)} $.
{Recall that  $B_{\NC} (\bullet,\bullet,\bullet)$ is symmetric with respect to the first two arguments}. 
For $  {\boldsymbol \zeta} -E_\M I_\M {\boldsymbol \zeta}\in \cV$, \cite[Lemma 3.9b]{CCGMNN18} shows that}
\[
T_{11}=
 2 B_{\pw}(\Psi, \Psi-\Psi_\M,   {\boldsymbol \zeta} -E_\M I_\M {\boldsymbol \zeta}) 
  \lesssim \| \Psi\|_{H^{2+\gamma}(\Omega)} 
\trinl  \Psi-\Psi_\M \trinr_\NC 
\|   {\boldsymbol \zeta}- E_\M I_\M {\boldsymbol \zeta}\|_{1,2,\pw}.
\]
A triangle inequality, Lemma~\ref{Morley_Interpolation}.b-c, {Lemma \ref{hctenrich}.e}, and \eqref{magic1}
lead to 
\[
\|   {\boldsymbol \zeta}- E_\M I_\M {\boldsymbol \zeta}\|_{1,2,\pw}
\le  \|   {\boldsymbol \zeta}-I_\M {\boldsymbol \zeta}\|_{1,2,\pw}
+\|  I_\M  {\boldsymbol \zeta}- E_\M I_\M {\boldsymbol \zeta}\|_{1,2,\pw} 
\le h_\ma^{1+\gamma}    \| {\boldsymbol \zeta}\|_{H^{2+\gamma}(\Omega)} .
\]
The combination of the aforementioned estimates with 
$1/2<\gamma\le 1$ results in  
\[
 T_7+T_8=T_9+T_{10}+T_{11}
 \lesssim 
 h_\ma^{\gamma} \left(  \trinl \Psi-\Psi_\M \trinr_{\NC}  + {\rm osc}_0(f,\cT)\right)   
 \| {\boldsymbol \zeta}\|_{H^{2+\gamma}(\Omega)}.
\]
{\it Step four} is the conclusion of the proof. 
The  estimates for $T_1$ to $T_8$ 
lead to 
\[
 \| {E}_{\M}\rho_{\M}\|_{1,2}^2 \lesssim
 h^{\gamma}_\ma  (1+\| \Psi \|_{{H}^{2+\gamma}(\Omega)} )  
\left( \trinl \Psi -\Psi_\M\trinr_\NC  + {\rm osc}_0(f, \T)\right) 
\| {\boldsymbol \zeta} \|_{{H}^{2 + \gamma}(\Omega)}.
\]
Recall \eqref{regularity} with 
$
\| {\boldsymbol \zeta}\|_{H^{2+\gamma}(\Omega)}
 \lesssim   \| {\boldsymbol g}\|_{-1}
 \lesssim  \|  {E}_{\M} {\boldsymbol \rho}_{\M} \|_{1,2}$, so that
\[
 \| {E}_{\M}\rho_{\M}\|_{1,2} \lesssim
 h^{\gamma}_\ma  (1+\| \Psi \|_{{H}^{2+\gamma}(\Omega)} )  
\left( \trinl \Psi -\Psi_\M\trinr_\NC  + {\rm osc}_0(f, \T)\right) .
\]
Recall Theorem~\ref{ap} and write  $ \| \Psi \|_{{H}^{2+\gamma}(\Omega)}  \lesssim 1$ 
(so the constants depend on $\| f\|_{-1} $).  This, the  previous estimate,  and 
 \eqref{intermediateeqst} conclude the proof for $m=0$. For $m\in\bN$,  utilise a companion operator 
$E_{\rm M}$ outlined in  Remark~\ref{remexctensions} with all the properties
of Lemma~\ref{hctenrich} plus the higher-order orthogonality
$ \Pi_m(v_{\rm M} - E_{\rm M}  v_{\rm M}) =0$  for all $v_{\rm M} \in \rm M(\T)$.
This allows in $T_9$ the extra orthogonality
\[
F(\Theta )= (f-\Pi_mf, \Theta_1)_{L^2(\Omega)} 
 \lesssim  {\rm osc}_m(f,\T)  \trinl \Theta \trinr_{\NC} 
  \lesssim h_\ma^{\gamma} {\rm osc}_m(f,\cT)    
  \| {\boldsymbol \zeta}\|_{{H}^{2+\gamma}(\Omega)} .
\]
This modification enables the proof for general $m\in\bN $; further details are omitted.
\qed

\section{Adaptive mesh-refinement and the axioms of adaptivity}
\label{secAdaptivemesh-refinementandtheaxiomsofadaptivity}
In the remainder of this paper,  $\Psi=(u,v)\in \bV$ is a fixed 
regular solution to \eqref{vform_cts} called  {\em the exact solution}.
Theorem~\ref{aprioriestimate}.a leads to $\varepsilon_0, \delta_0>0$ 
such that any triangulation $\cT\in\bT(\delta_0)$
leads to a  unique discrete solution $\Psi_{\rm M} \in \bV(\T)$  
to \eqref{vformdNC} with $\trinl \Psi-\Psi_{\rm M}\trinr_{\rm pw} \le \varepsilon_0$
and this $\Psi_{\rm M}=(u_{\rm M},v_{\rm M})$ is called {\em the discrete solution}.

\subsection{A posteriori error analysis} 
Given  the  discrete solution $\Psi_{\rm M}=(u_{\rm M},v_{\rm M}) \in \bV(\T)$   to 
 \eqref{vformdNC} 
define  $ \eta(\T,K) \ge 0 $ as the square root of 
\begin{align}
\label{estimators} 
\eta^2(\T,K) & := |K|^2\left( \|[u_\M,v_\M]+f \|_{2,K}^2+\|[u_\M,u_\M]\|_{2,K}^2\right) 
\nonumber \\
&  + |K|^{1/2} \sum_{E \in {\cal E}(K) } \left( 
\|[D_\NC^2 u_\M]_E\tau_E\|_{L^2(E)}^2+\|[D_\NC^2 v_\M]_E\tau_E \|_{L^2(E)}^2 \right).
\end{align}
Recall the notation from  Subsection~\ref{subsecTriangulationsanddiscretespaces}
and let  ${\cal E}(K)$ denote  the three edges of a triangle $K\in\cT$ with area $|K|$. 
The jump $[\bullet]_E$ across an interior edge 
$E=\partial T_+ \cap \partial T_-\in \cE(\Omega)$
with tangential vector $\tau_E$ and normal $\nu_E$ 
is the difference of the respective traces on $E$ from the two neighbouring 
triangles $T_{\pm}$ that form the edge
patch $\omega_E:=\text{int}(T_+\cup T_-)$. 
The jump  $[\bullet]_E$ along  a  boundary edge $E\in \cE(\partial\Omega)$
is simply the trace from the attached triangle $T_+=\overline{\omega_E}$;
the contribution of the missing jump partner is zero.
Like any other operator,  $[\bullet]_E$  acts componentwise
in  $ \|[D_\NC^2 u_\M]_E\tau_E\|_{L^2(E)}$ {and $ \|[D_\NC^2 v_\M]_E\tau_E\|_{L^2(E)}$}. 
Given  any subset $\mM\subseteq \T$ of $\cT\in\bT(\delta_0)$, its contribution
 $\eta(\T,{\mathcal M})\ge 0 $  is the square root of the sum 
\begin{align}\label{eq:def_estim}
\eta^2(\T,{\mathcal M}):=\sum_{K\in\mM}\eta^2(\T,K)
\quad\text{and } \eta(\cT):=\eta(\cT,\cT) 
\end{align}
abbreviates the contribution of all triangles  (by convention 
$\eta(\T,\emptyset):=0$). Recall  the oscillations 
${\rm osc}_0^2(f,\cT )= \|h^2_{\T}(1-\Pi_0)f\|^2_{L^2(\Omega)}$ of $f\in L^2(\Omega) $ for the 
mesh-size factor 
 $h_\T \in P_0(\T)$. 

\begin{thm}[a posteriori \cite{CCGMNN_Semilinear}]
\label{reliability_NC} 
Given the exact solution $\Psi$ and $\varepsilon_0$ and $\delta_0$ from
Theorem~\ref{aprioriestimate}.a, there exist positive constants $C_{\rm rel}$ and $C_{\rm eff} $
(which depend on $\cT_{\rm init}$ and on $\Psi$, $\varepsilon_0$, $\delta_0$) so that, for all $\cT\in\bT(\delta_0)$,
 the discrete solution $\Psi_{\rm M} \in \bV(\T)$  and the error estimator $\eta(\cT)$  from
 \eqref{estimators}- \eqref{eq:def_estim}
satisfy 
\begin{align*}
C^{-1}_{\rm rel}\trinl{\boldmath\Psi}-{\boldmath\Psi}_{\rm M}\trinr_{\rm pw}&\leq \eta(\cT) 
	\leq C_{\rm eff} \left( \trinl{\boldmath \Psi}-{\boldmath\Psi}_{\rm M}\trinr_{\rm pw}
	+{\rm osc}_0(f,\cT)\right). \qquad\qquad\qed
\end{align*}
\end{thm}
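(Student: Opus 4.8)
The plan is to establish the lower bound (efficiency) by the classical bubble-function technique and the upper bound (reliability) by an $\inf$-$\sup$ argument for the linearised operator $DN(\Psi)$ that compares the exact solution $\Psi$ with the conforming companion $E_\M\Psi_\M\in\bV$ of the discrete solution, exploiting $N_h(\Psi_\M)(I_\M\Phi)=0$ to turn the consistency error into the explicit residual terms in $\eta(\cT)$.

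For the reliability, abbreviate $\xi:=\Psi-E_\M\Psi_\M\in\bV$ and split $\trinl\Psi-\Psi_\M\trinr_\pw\le\trinl\xi\trinr+\trinl E_\M\Psi_\M-\Psi_\M\trinr_\pw$, where the second summand is $\lesssim\eta(\cT)$ by Lemma~\ref{hctenrich}.e summed over $K\in\cT$, since the edge-jump terms in $\eta^2(\cT)$ carry the factor $|K|^{1/2}\approx h_E$. Since $N$ is quadratic with $N(\Psi)=0$, one has $N(E_\M\Psi_\M)(\Phi)=-\bigl(A(\xi,\Phi)+2B(\Psi,\xi,\Phi)\bigr)+B(\xi,\xi,\Phi)$ for all $\Phi\in\bV$, so \eqref{inf-sup} and the boundedness of $B$ give $\beta\trinl\xi\trinr\le\sup_{\trinl\Phi\trinr=1}\bigl|N(E_\M\Psi_\M)(\Phi)\bigr|+C_B\trinl\xi\trinr^2$; as $\trinl\xi\trinr\le\trinl\Psi-\Psi_\M\trinr_\pw+\trinl E_\M\Psi_\M-\Psi_\M\trinr_\pw\lesssim\varepsilon_0$ by Lemma~\ref{hctenrich}.d (with the minimiser the exact solution), shrinking $\varepsilon_0,\delta_0$ if necessary absorbs the quadratic term and $\trinl\xi\trinr\lesssim\sup_{\trinl\Phi\trinr=1}|N(E_\M\Psi_\M)(\Phi)|$. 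To bound $N(E_\M\Psi_\M)(\Phi)$ for $\trinl\Phi\trinr=1$, use $N_h(\Psi_\M)(I_\M\Phi)=0$ and write $N(E_\M\Psi_\M)(\Phi)=\bigl(N(E_\M\Psi_\M)(\Phi)-N_h(\Psi_\M)(\Phi)\bigr)+N_h(\Psi_\M)\bigl((1-I_\M)\Phi\bigr)$. On the conforming $\Phi$ the exact and piecewise forms coincide, so the first bracket equals $A_\NC(E_\M\Psi_\M-\Psi_\M,\Phi)+B_\NC(E_\M\Psi_\M,E_\M\Psi_\M-\Psi_\M,\Phi)+B_\NC(E_\M\Psi_\M-\Psi_\M,\Psi_\M,\Phi)\lesssim\trinl E_\M\Psi_\M-\Psi_\M\trinr_\pw\lesssim\eta(\cT)$ by Lemma~\ref{b_dG}.a and the boundedness of $\trinl\Psi_\M\trinr_\pw$ and $\trinl E_\M\Psi_\M\trinr_\pw$; in the second term $A_\NC(\Psi_\M,(1-I_\M)\Phi)=0$ because $D^2_\pw\Psi_\M$ is elementwise constant while $(1-I_\M)\Phi$ has vanishing Hessian integral mean on each triangle (Lemma~\ref{Morley_Interpolation}.a), so what remains is $-\sum_{K\in\cT}\int_K([u_\M,v_\M]+f)(1-I_\M)\varphi_1\,\dx+\frac12\sum_{K\in\cT}\int_K[u_\M,u_\M](1-I_\M)\varphi_2\,\dx$, which a Cauchy inequality, $\|(1-I_\M)\varphi_j\|_{L^2(K)}\lesssim h_K^2\|D^2\varphi_j\|_{L^2(K)}$ (Lemma~\ref{Morley_Interpolation}.b) and $h_K^4=|K|^2$ bound by the volume part of $\eta(\cT)$. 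This proves the reliability estimate.

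For the efficiency one argues locally with the usual fourth-order bubbles. For $K\in\cT$ test the first equation of \eqref{vkedG} against $b_K^2\,\Pi_0([u_\M,v_\M]+f)\in H^2_0(K)$ for the cubic element bubble $b_K$; because $\int_K D^2_\pw u_\M:D^2\phi=0$ for $\phi\in H^2_0(K)$, two integrations by parts and an inverse estimate on the bubble give $|K|\,\|[u_\M,v_\M]+f\|_{L^2(K)}\lesssim\|D^2_\pw(\Psi-\Psi_\M)\|_{L^2(K)}+{\rm osc}_0(f,\cT)$, the bracket difference $[u,v]-[u_\M,v_\M]$ being controlled through the elementwise constant structure of $D^2u_\M$, local inverse and Sobolev estimates, and $\trinl\Psi-\Psi_\M\trinr_\pw\le\varepsilon_0$; the second equation handles $|K|\,\|[u_\M,u_\M]\|_{L^2(K)}$ likewise and without an oscillation term since $[u_\M,u_\M]$ is elementwise constant. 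For an edge $E$, continuity of the second-order traces of $u\in H^{2+\gamma}(\Omega)$, $\gamma>1/2$, gives $[D^2_\NC u_\M]_E\tau_E=[D^2_\NC(u_\M-u)]_E\tau_E$, so an edge bubble in $H^2_0(\omega_E)$ yields $|K|^{1/4}\|[D^2_\NC u_\M]_E\tau_E\|_{L^2(E)}\lesssim\|D^2_\pw(u-u_\M)\|_{L^2(\omega_E)}$ plus the already-estimated volume residuals on $\omega_E$; summation over $K$ and $E$ with finite overlap completes the bound.

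The main obstacle is the nonlinearity. In the reliability part it is precisely the quadratic remainder $B(\xi,\xi,\Phi)$ that forces the smallness of $\varepsilon_0$ (hence of $\delta_0$) so that it can be absorbed, and the bracket differences built from $E_\M\Psi_\M-\Psi_\M$ must be handled with the discrete Sobolev embeddings of Lemma~\ref{new12} (through Lemma~\ref{b_dG}) together with the boundedness of the discrete solution. In the efficiency part the delicate point is to estimate $[u,v]-[u_\M,v_\M]$ and $[u_\M,u_\M]-[u,u]$ by $\|D^2_\pw(\Psi-\Psi_\M)\|_{L^2(\cdot)}$ alone: one exploits that $D^2u_\M$ is elementwise constant, so that an inverse estimate trades its $L^\infty$ norm for a controllable $L^2$ norm up to a harmless power of $h_K$, and that $\Psi$ is the fixed regular solution; keeping the bookkeeping of these $h_K$-powers — which in the end costs only a global factor $h_{\max}\le1$ — is the one genuinely careful step.
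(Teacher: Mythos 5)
Your argument is correct in substance, but it is worth noting that the paper does not actually prove Theorem~\ref{reliability_NC}: it quotes both inequalities from \cite{CCGMNN_Semilinear} and supplies only an \emph{alternative} proof of the reliability half, in Corollary~\ref{correliability}, by passing to the limit along uniform refinements in the discrete reliability bound {\bf (A3)} and invoking the a~priori convergence $\trinl\Psi-\widehat{\Psi}_{\rm M}\trinr_{\rm pw}\to0$ of Theorem~\ref{aprioriestimate}. Your route is therefore genuinely different from what is printed here: you give a direct, self-contained proof — the companion $E_\M\Psi_\M$, the quadratic expansion $N(E_\M\Psi_\M)(\Phi)=-(A(\xi,\Phi)+2B(\Psi,\xi,\Phi))+B(\xi,\xi,\Phi)$, the continuous inf-sup \eqref{inf-sup} with absorption of the quadratic remainder, and the split of the residual into a consistency part (controlled by $\trinl E_\M\Psi_\M-\Psi_\M\trinr_{\pw}\lesssim\eta(\cT)$ via Lemma~\ref{hctenrich}.e) and a volume part via $N_h(\Psi_\M)((1-I_\M)\Phi)$ and Lemma~\ref{Morley_Interpolation}. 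This is essentially the argument of the cited reference and, unlike the paper's Corollary~\ref{correliability}, it does not presuppose the discrete inf-sup machinery behind {\bf (A3)}; conversely, the paper's version is a one-line consequence once {\bf (A3)} is available. Your efficiency sketch by $H^2_0$-bubbles is the standard Verf\"urth route; note that for the edge terms you could bypass the edge-bubble construction entirely, since the second inequality in Lemma~\ref{hctenrich}.e already states $\sum_{E}h_E\|[D^2_{\pw}v_\M]_E\tau_E\|^2_{L^2(E)}\lesssim\min_{v}\|D^2_{\pw}(v_\M-v)\|^2$ locally. Two minor caveats: your absorption step requires $\varepsilon_0$ small relative to $\beta$ and the constant in Lemma~\ref{hctenrich}.d, so strictly you prove the theorem for possibly smaller $\varepsilon_0,\delta_0$ than those of Theorem~\ref{aprioriestimate}.a (harmless, but worth stating as a without-loss-of-generality reduction); and in the bracket difference $[u-u_\M,v]$ the factor $D^2v$ is only in $L^4$ via $H^{2+\gamma}(\Omega)\hookrightarrow W^{2,4}(\Omega)$, so the H\"older exponents there need the care you already flag.
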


The proofs can be found in   \cite{CCGMNN_Semilinear}; an alternative proof of the reliability 
(the first inequality in Theorem~\ref{reliability_NC}) follows 
in Corollary~\ref{eq:def_estim} below. This paper does not exploit the  efficiency (the second inequality in Theorem \ref{reliability_NC}) except for Remark~\ref{rem:nonlinearapproximation}.
 
\subsection{Adaptive Morley finite element algorithm}
Recall that the initial  triangulation $\cT_{\text{init}}$ satisfies 
the initial condition (IC) for its reference edges from 
Subsection~\ref{subsecTriangulationsanddiscretespaces}.
Recall  ${\boldmath\Psi}$ and $\varepsilon_0, \delta_0>0$ from Theorem~\ref{aprioriestimate}.a.
\\[3mm]
{\bf Adaptive algorithm (AMFEM).}\\
\begin{algof}
	\DontPrintSemicolon
	\KwIn{Initial  triangulation $\cT_{\text{init}}$ with IC,  
	$0<\delta\le \delta_0<1$, and 	$0<\theta\le  1$ 
}
	\acompute{ $\cT_0$ by uniform refinements of $\cT_{\text{init}}$ such that
	$\cT_0\in\bT(\delta)$}

	\For{$\ell=0,1,\ldots$}	
	{
		\acompute{ discrete solution ${\boldmath\Psi}_{\ell}=(u_\M,v_\M) \in \bV(\T_{\ell})$ 
	         with $\trinl {\boldmath\Psi}-{\boldmath\Psi}_\ell \trinr_{\NC} \le \varepsilon_0$	
	         
		\acompute{
		$\eta_{\ell}(K):=\eta(\cT_\ell,K) $  
		by  \eqref{estimators} for all $K \in \Tl \ell$   }}
			{\vspace{-2ex}
		\aselect{ $\Ml \ell \subseteq \Tl \ell$ of (almost) minimal cardinality with 
		\vspace{-1.5ex}
		\begin{align}\label{sAfem:eq:bulkA}
			\theta \: \eta^2_\ell (\cM_\ell ) \leq \eta_\ell^2 := \eta^2(\T_\ell)
		\end{align}}
		\vspace{-5ex}
		\acompute{$\Tl {\ell+1}:= \textsc{Refine}(\Tl \ell,\Ml \ell)$ \vspace{0.5ex}}
		}
		}
		
		\KwOut{$\Tl \ell$, ${\boldmath\Psi}_\ell$, and  $\eta_\ell$ for $\ell\in \bN_0$}
\end{algof}

Throughout this paper, $\Tl \ell$, ${\boldmath\Psi}_\ell$, and  $\eta_\ell$ will refer to the 
output of this adaptive algorithm and $\eta_\ell(K):=\eta(\cT_\ell,K)$
for all $K\in \cT_\ell$. 
Some comments are in order before the axioms of adaptivity are reviewed.

\begin{rem}[exact solve]\label{exact solve}
The main idealisation of this paper is the assumption on  {\em exact solve}  in AMFEM.
An optimal practical algorithm (optimal also with respect to the total run time of the overall algorithm with 
multilevel methods and nested iteration) has to overcome further difficulties 
beyond the scope  of this paper. 
An iterative solver has to be employed in practice and the {\em  termination} of which has to be monitored.  
The computed approximation $\widetilde{\eta}(\cT_\ell)$ to the error estimators $\eta_\ell$
are based on computed approximations $ \widetilde{\Psi_\ell} $ to the discrete solution $ \Psi_\ell $ and the error
$\trinl \Psi_\ell -\widetilde{\Psi}_\ell  \trinr_{\NC} $ has to be
controlled. A practical termination criterion reads 
$\trinl \Psi_\ell -\widetilde{\Psi}_\ell  \trinr_{\NC} \le \kappa  \trinl \widetilde{\Psi_\ell} \trinr_{\NC}$ 
for a {small positive constant} $\kappa$. This could be  guaranteed e.g. by some
Newton-Kantorovich theorem in the finite-dimensional nonlinear discrete problem. A perturbation analysis
enables optimal convergence rates in the general case as in  \cite{CCJGSINUM,Pretetal};
{numerical  experiments can be found in \cite{GMNN_Morley}}.

In the absence of additional information on $\Psi$, the discrete problem may have multiple
solutions and the {\em  selection}  of one in AMFEM is less clear. Moreover,   
$\varepsilon_0, \delta_0>0$ from Theorem~\ref{aprioriestimate}.a exist but are not easy to quantify in general. 
The proposed version of AMFEM has an initial phase with uniform mesh-refining steps monitored with
the input parameter $\delta$. One reason to choose $\delta>0$ small is that $\delta\le\delta_0$ 
resolves the nonlinearity in the sense that it guarantees
the existence of a unique discrete solution near $\boldmath \Psi$. 
\end{rem}

\begin{rem}[input parameter]
The optimal convergence rates follow from 
Theorem~\ref{thmOptimalrates} below under the conditions  $0<\delta,\theta\ll 1$
sufficiently small.
 The  choice of the bulk parameter $\theta<1/(1+\Lambda_1^2\Lambda_3)$ 
 below is independent of $\delta\le \delta_0$ (but depends on $\Psi,\delta_0,\varepsilon_0$).
Several arguments  in the analysis of {\bf (A3)}-{\bf (A4)} below require $\delta>0$ to be very small
(possibly much smaller than $\delta_0$) and it is conjectured  that this is not a technical artefact. 
\end{rem}

\begin{rem}[marking]
Recall the sum convention \eqref{eq:def_estim} for the meaning of 
the bulk criterion \eqref{sAfem:eq:bulkA}. A greedy algorithm for the computation of
a subset $\cM_\ell^*$ with $\theta \: \eta^2_\ell (\cM_\ell^* ) \leq \eta_\ell^2$
and {\em minimal  cardinality}  $|\cM_\ell^*|$
may first sort the triangles in $\cT_\ell$ according to the size of
its estimator contribution $\eta_\ell(K)$. Quick sort may lead to
superlinear computational costs and is circumvented in \cite{Stev07} by 
computing a subset  $\cM_\ell$  of {\em almost minimal 
cardinality} $|\cM_\ell |$ with \eqref{sAfem:eq:bulkA} and 
 $|\cM_\ell|\le C_{\text{am}} |\cM_\ell^*|$  for a universal constant  $C_{\text{am}}\ge 1$.
\end{rem}

\begin{rem}[refine]
{The procedure} $\textsc{Refine}$ specifies 
the newest vertex bisection (NVB)  with completion (to avoid hanging nodes). 
The output $\Tl{\text{out}}:=\textsc{Refine}(\Tl{\text{in}},\M) \in \bT(\cT_\text{in})$  
is {the smallest refinement} of $\Tl {\text{in}}$ with NVB of \cref{fig1} and 
 $M\in \Tl{\text{in}}\setminus \Tl{\text{out}}$.
The initial condition  of  $\cT_{\text{init}}$ carries over to the first triangulation  $\cT_{0}$
because of the uniform refinements with NVB.
More details  may be found in {\cite{BinevDahmenDevore04,KMPDPD2013,Stevenson08}}.
 \end{rem}

\subsection{Axioms of Adaptivity}\label{axioms}
Recall the {\em 2-level notation}: Each triangulation $\T\in\TO(\delta)$ 
(resp. its refinement $\hT\in\TO(\cT)$) 
leads to a unique 
discrete solution $\Psi_{\rm M}=(u_{\rm M},v_{\rm M}) \in \bV(\T)$  
(resp. $\widehat{\Psi}_{\rm M}=(\widehat{u}_{\rm M},\widehat{v}_{\rm M}) \in \bV(\hT)$)
to \eqref{vformdNC} with $\trinl \Psi-\Psi_{\rm M}\trinr_{\rm pw} \le \varepsilon_0$ 
(resp. $\trinl \Psi-\widehat{\Psi}_{\rm M}\trinr_{\rm pw} \le \varepsilon_0$).
This defines  the (global) distance 
\[
\delta(\T,\hT):=  \trinl \widehat{\Psi}_{\rm M} - {\Psi}_{\rm M} \trinr_{\rm pw} 
\]
of $\T\in\TO$ and its refinement $\hT\in\TO(\T)$ as a global non-negative real number. 
Recall the definition \eqref{estimators} of $\eta(\cT,K)$ for all $K\in\cT\in\bT(\delta)$
and specify, for  fixed $\T\in\TO$ and its  fixed
refinement $\hT\in\TO(\T)$,
\[
\eta(K):=\eta(\cT,K)\quad\text{and}\quad
\widehat{\eta}(T):=\eta(\widehat{\cT},T) \quad\text{for }K\in\cT
\text{ and } T\in\widehat{\cT}
\]
and adapt the sum conventions  \eqref{eq:def_estim} for the short-hand notation
$\eta$ and $\widehat{\eta}$ in the axioms {\bf (A1)}-{\bf (A3)}  
with universal constants $\Lstab$, $\Lred$, and $\LdRel$ in 
\begin{axioms}
\item[(A1)]Stability. \hfil
		$\displaystyle \abs{\widehat{\eta}(\T \cap \hT) - \eta(\T \cap \hT)} \leq \Lstab \delta(\T,\hT)$.
	
\item[(A2)] Reduction. 	\hfil
		$\displaystyle \widehat{\eta}(\hT \setminus \T) \leq 
		2^{-1/4}\eta(\T \setminus \hT) + \Lred \delta(\T,\hT)$.
\\[0mm]	
\item[(A3)] Discrete Reliability. \hfil
		$\displaystyle \delta^2(\T, \hT) \leq \LdRel \eta^2(\T \setminus \hT )$.

\item[(A4)] Quasiorthogonality. \hfil			
$\displaystyle \sum_{k=\ell}^{\infty} \delta^2(\Tl k, \Tl {k+1})
			\leq \Lqo \eta_{\ell}^2$ for all $\ell \in \NO$.
\end{axioms}
The notation in the axiom {\bf (A4)}  solely concerns the outcome  $\cT_{\ell}$ and 
$\eta_{\ell}$ of AMFEM  with a universal constant $\Lqo $ and already asserts that the left-hand side
is a converging sum. 

The subsequent section provides the proofs of all those four axioms and then allows
the application of the abstract theorem for optimal convergence rates. Recall the 
definitions of $\TT{\delta}$ for $0<\delta<1$ and $\TT{N}$ for $N\in\bN_0$ in Subsection~\ref{subsecTriangulationsanddiscretespaces} and define the set 
\[
\TT{\cT_0,N}:=\{ \cT\in\TT{\cT_0}: |\cT|\le N+|\cT_0|\} 
\]
of all admissible refinements of $\cT_0$ with at most $N\in\bN_0$ extra triangles.

\begin{thm}[optimal rates in adaptive FEMs \cite{cc14,cc_hella_18}]\label{thmOptimalrates} 
Suppose  {\bf  (A1)}-{\bf  (A4)},
$0<\theta <\theta_0:= 1 /(1+\Lstab^2\LdRel)$ 
in AMFEM with  output  $(\Tl \ell)_{\ell \in \mathbb N_0}$ and 
$(\eta_\ell)_{\ell \in \mathbb N_0}$ and let $s>0$.
Then (there exist equivalence constants in)
\begin{align}\label{eq:optim}
\sup_{\ell \in \NO}\left(1+\abs{\Tl \ell} - \abs{\Tl 0}\right)^{s}  \eta_\ell 
\approx \sup_{N\in \NO} (1+N)^s \min \eta(\TT{\cT_0,N})
\end{align}
with the minimum $\min \eta(\TT{\cT_0,N})$ of all 
$\eta(\cT)$ with $\cT\in \TT{\cT_0,N}$ for $N\in\bN_0$. 
\end{thm}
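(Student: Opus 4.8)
The assertion is the abstract optimality theorem of the axiomatic framework \cite{cc14,cc_hella_18}, so the plan is to check that the AMFEM quantities $(\eta_\ell)_{\ell\in\NO}$ and the distance $\delta(\cdot,\cdot)$ fit that framework and then to assemble its three classical pillars: (i) R-linear convergence of the estimator, (ii) optimality (``quasi-optimality'') of D\"orfler marking, and (iii) the mesh-closure estimate for newest-vertex bisection. The ``$\lesssim$'' direction of \eqref{eq:optim} is the easy one: every $\cT_\ell$ lies in $\TT{\cT_0,N}$ with $N=\abs{\Tl\ell}-\abs{\Tl 0}$, so $\min\eta(\TT{\cT_0,N})\le\eta_\ell$; since $N\mapsto\min\eta(\TT{\cT_0,N})$ is nonincreasing, taking suprema over the subsequence $N=\abs{\Tl\ell}-\abs{\Tl 0}$ shows the right-hand side of \eqref{eq:optim} dominates the left. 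All the work is in the reverse inequality.

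\textbf{Step 1: estimator reduction and R-linear convergence.} First I would combine stability \textbf{(A1)} and reduction \textbf{(A2)} with the bulk criterion \eqref{sAfem:eq:bulkA}. Splitting $\Tl{\ell+1}$ into $\Tl\ell\cap\Tl{\ell+1}$ and $\Tl{\ell+1}\setminus\Tl\ell$, using that marked triangles are refined so $\cM_\ell\subseteq\Tl\ell\setminus\Tl{\ell+1}$, applying \textbf{(A1)} on the unrefined part, \textbf{(A2)} on the refined part, a weighted Young inequality \eqref{eq:weighted}, and the D\"orfler estimate $\theta\,\eta_\ell^2(\cM_\ell)\le\eta_\ell^2$, one obtains an estimator reduction $\eta_{\ell+1}^2\le\rho_1(\theta)\,\eta_\ell^2+\Lambda\,\delta^2(\Tl\ell,\Tl{\ell+1})$ with $\rho_1(\theta)\in(0,1)$ for $\theta$ fixed. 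Feeding this into quasiorthogonality \textbf{(A4)} -- namely $\sum_{k\ge\ell}\delta^2(\Tl k,\Tl{k+1})\le\Lqo\,\eta_\ell^2$ -- and invoking the abstract ``R-linear convergence lemma'' of \cite{cc14} yields constants $C_{\mathrm{lin}}\ge1$ and $\rho\in(0,1)$ with $\eta_{\ell+m}^2\le C_{\mathrm{lin}}\,\rho^m\,\eta_\ell^2$ for all $\ell,m\in\NO$. This makes geometric sums of the form $\sum_k\eta_k^{-1/s}$ (read backwards from a fixed index) converge, controlled by the last term.

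\textbf{Step 2: optimality of D\"orfler marking (the comparison lemma).} This is the heart. Fix $\ell$, choose $N_\star\in\NO$ and a (near-)optimal $\cT_{\mathrm{opt}}\in\TT{\cT_0,N_\star}$ realising $\min\eta(\TT{\cT_0,N_\star})$, with $N_\star$ tuned so that $\min\eta(\TT{\cT_0,N_\star})$ is just below $\eta_\ell$ times a universal factor. Let $\cT_{\mathrm{sub}}$ be the overlay (coarsest common refinement) of $\Tl\ell$ and $\cT_{\mathrm{opt}}$, so $\abs{\cT_{\mathrm{sub}}}-\abs{\Tl\ell}\le\abs{\cT_{\mathrm{opt}}}-\abs{\cT_0}$. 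Applying discrete reliability \textbf{(A3)} to the pair $(\Tl\ell,\cT_{\mathrm{sub}})$, stability \textbf{(A1)}, and the reliability estimate of Theorem~\ref{reliability_NC} on $\cT_{\mathrm{sub}}$, one derives -- precisely under the hypothesis $\theta<\theta_0=1/(1+\Lstab^2\LdRel)$ -- a D\"orfler-type inequality $\theta\,\eta_\ell^2(\Tl\ell\setminus\cT_{\mathrm{sub}})\le\eta_\ell^2$. By (almost-)minimality of $\cM_\ell$ this forces $\abs{\cM_\ell}\le C_{\mathrm{am}}\,\abs{\Tl\ell\setminus\cT_{\mathrm{sub}}}\lesssim\abs{\cT_{\mathrm{opt}}}-\abs{\cT_0}\lesssim N_\star$, hence $\abs{\cM_\ell}\lesssim\eta_\ell^{-1/s}\sup_{N\in\NO}(1+N)^s\min\eta(\TT{\cT_0,N})$. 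Because $\eta_\ell$ is not monotone along the AMFEM sequence (the problem is nonlinear), one cannot simply substitute the non-sequence mesh $\cT_{\mathrm{sub}}$; the passage from $\eta(\cT_{\mathrm{sub}})$ back to $\eta_\ell$ must be routed through the summable perturbations of \textbf{(A4)} and the R-linear bound of Step~1. This interplay -- and the bookkeeping that makes the smallness of $\theta$ (and implicitly of $\delta$, which enters the constants in \textbf{(A3)}--\textbf{(A4)}) line up -- is the step I expect to be the main obstacle.

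\textbf{Step 3: conclusion via mesh closure.} Finally I would invoke the Binev--Dahmen--DeVore/Stevenson mesh-closure estimate for NVB \cite{BinevDahmenDevore04,Stevenson08,KMPDPD2013}, $\abs{\Tl\ell}-\abs{\Tl 0}\lesssim\sum_{k=0}^{\ell-1}\abs{\cM_k}$, together with almost-minimality $\abs{\cM_k}\le C_{\mathrm{am}}\abs{\cM_k^\star}$. Inserting the bound $\abs{\cM_k}\lesssim\eta_k^{-1/s}\sup_{N}(1+N)^s\min\eta(\TT{\cT_0,N})$ from Step~2 and summing the geometric series $\sum_{k<\ell}\eta_k^{-1/s}\lesssim\eta_\ell^{-1/s}$ (by R-linear convergence) gives $\abs{\Tl\ell}-\abs{\Tl 0}\lesssim\eta_\ell^{-1/s}\sup_{N}(1+N)^s\min\eta(\TT{\cT_0,N})$, which rearranges to the ``$\gtrsim$'' half of \eqref{eq:optim}. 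Combined with the trivial ``$\lesssim$'' half noted above, this establishes the claimed equivalence with $s$-independent equivalence constants depending only on $\cT_{\mathrm{init}}$, $\Psi$, $\delta_0$, $\varepsilon_0$, $\theta$, and the universal constants $\Lstab,\Lred,\LdRel,\Lqo$.
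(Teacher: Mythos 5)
Your proposal is correct in substance, but it answers a different question than the paper does: the paper gives no proof of this theorem beyond the observation that it is a special case of the abstract optimality results in \cite{cc14,cc_hella_18} (applied with $\cT_0$ in place of $\cT_{\mathrm{init}}$, so that the axioms need only hold on $\TT{\cT_0}$ and the equivalence constants pick up a dependence on $\delta$, $\theta$, $\cT_0$, and $s$). What you have written is a faithful reconstruction of the proof of that cited abstract theorem --- estimator reduction plus \textbf{(A4)} giving R-linear convergence, the overlay/comparison lemma built from \textbf{(A1)} and \textbf{(A3)} producing the threshold $\theta_0=1/(1+\Lstab^2\LdRel)$ and the bound $|\cM_\ell|\lesssim\eta_\ell^{-1/s}\sup_{N}(1+N)^s\min\eta(\TT{\cT_0,N})$, and the Binev--Dahmen--DeVore closure estimate followed by a geometric sum --- so the two routes are the same mathematics at different levels of delegation. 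Three small corrections to your sketch. First, the bulk criterion must be read as $\theta\,\eta_\ell^2\le\eta_\ell^2(\cM_\ell)$; the display \eqref{sAfem:eq:bulkA} as printed is trivially satisfied by $\cM_\ell=\emptyset$, and both your estimator-reduction step and your comparison step actually require the criterion in this nontrivial direction. Second, in Step 2 the passage from $\eta(\cT_{\mathrm{sub}})$ to $\min\eta(\TT{\cT_0,N_\star})$ should go through the quasimonotonicity of the estimator (itself a consequence of \textbf{(A1)}--\textbf{(A3)}), not through the continuous reliability/efficiency of Theorem~\ref{reliability_NC}; this matters because the approximation class in \eqref{eq:optim} is formulated in terms of estimators rather than the total error. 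Third, the equivalence constants are not $s$-independent: the geometric-series bound $\sum_{k<\ell}\eta_k^{-1/s}\lesssim\eta_\ell^{-1/s}$ degenerates as $s\to\infty$, and the paper explicitly records the dependence on $s$. None of these points affects the validity of your overall argument.
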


\begin{proof}
The formulation of this theorem is a simplified version of the results in
\cite{cc14,cc_hella_18} based on the seminal paper \cite{Stev07} for the special case 
$\cT_0\equiv\cT_{\text{\rm init}}$  (leave out the uniform refinement steps in the beginning). 
To enable unique discrete solutions near a regular solution $\Psi$, the present algorithm (AMFEM) involves the computation of $\cT_0$  and then runs a  standard adaptive algorithm. Consequently, the analysis
of the  standard adaptive algorithm in  \cite{cc14,cc_hella_18} applies and requires the  axioms 
 {\bf  (A1)}-{\bf  (A4)}  to hold solely for $\cT\in \TT{\cT_0}$ to {guarantee}  \eqref{eq:optim}. 
 As a consequence,  the   equivalence constants  (behind the notation $\approx$) in \eqref{eq:optim} 
depend on all parameters  $\delta$,  $\theta$,  $\cT_0$, and $s$. 
\end{proof}

The point of this paper is the verification of  {\bf  (A1)}-{\bf  (A4)} for small positive $\delta<1$
to prove the main result of optimal rates.

\begin{thm}[optimal rates in (AMFEM)]\label{thmOptimalratesAMFEM} 
Given a regular solution $\boldmath \Psi$ to  \eqref{vform_cts} and an initial 
triangulation $\cT_{\rm init}$,  there exist positive constants
$\overline{\delta}$, $\overline{\theta}<1$ such that the algorithm (AMFEM)
runs for all $0<\delta\le \overline{\delta}$ and $0<\theta\le \overline{\theta}$
with an output  $(\Tl \ell)_{\ell \in \mathbb N_0}$ and 
$(\eta_\ell)_{\ell \in \mathbb N_0}$ that satisfies  \eqref{eq:optim} for all $s>0$
with equivalence constants  (behind the notation $\approx$), 
which  depend on $\boldmath \Psi$, $\cT_{\rm init}$,  $\overline{\delta}$, $\overline{\theta}$, 
and $s$ but are independent of $\delta$ and $\theta$. 
\end{thm}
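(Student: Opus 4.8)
The plan is to obtain Theorem~\ref{thmOptimalratesAMFEM} from the abstract Theorem~\ref{thmOptimalrates}. Since AMFEM first produces, by a \emph{finite} number of uniform newest-vertex-bisection refinements of $\cT_{\text{init}}$ determined by $\delta$, a triangulation $\cT_0\in\bT(\delta)$ and then runs a standard adaptive loop on $\cT_0$, it suffices to exhibit thresholds $\overline\delta,\overline\theta$ with the following two properties. First, for every $0<\delta\le\overline\delta$ the axioms \textbf{(A1)}--\textbf{(A4)} hold for the family $\bT(\cT_0)$ of admissible refinements of $\cT_0$, with universal constants $\Lstab,\Lred,\LdRel,\Lqo$ that depend only on $\cT_{\text{init}}$ and on the fixed regular solution $\Psi$ (in particular not on $\delta$, nor on the level $\ell$). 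Second, $\overline\theta<\theta_0:=1/(1+\Lstab^2\LdRel)$. Granting this, Theorem~\ref{thmOptimalrates} applied with $\cT_0$ in place of $\cT_{\text{init}}$ yields \eqref{eq:optim} for every admissible pair $(\delta,\theta)$.

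The verification of the axioms is carried out in Section~5. Stability \textbf{(A1)} and reduction \textbf{(A2)} follow from triangle inequalities together with the trace inequality and the piecewise inverse estimate applied to the jump terms of $\eta(\cT,K)$ in \eqref{estimators}, and from the $\NC$-stability of the discrete solution operator; these hold on every $\cT\in\bT(\delta_0)$. Discrete reliability \textbf{(A3)}, $\delta^2(\cT,\widehat\cT)\lesssim\eta^2(\cT\setminus\widehat\cT)$, is obtained by testing the difference of the discrete problems \eqref{vformdNC} on $\cT$ and on $\widehat\cT$ against a conforming companion from Lemma~\ref{hctenrich} (the medius analysis), bounding all consistency contributions by the volume and jump residuals supported in the refined region $\cT\setminus\widehat\cT$, and absorbing the quadratic $B_\NC$-contributions by the discrete inf--sup of Theorem~\ref{dis_stab} and the a~priori bound $\trinl\Psi-\Psi_\M\trinr_\NC\le\varepsilon_0$; absorbing these nonlinear terms forces $\delta$ to be small. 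Quasiorthogonality \textbf{(A4)}, the summability $\sum_{k\ge\ell}\delta^2(\cT_k,\cT_{k+1})\lesssim\eta_\ell^2$, is where the non-higher-order character of the volume residual $\mu(T)$ bites: it is remedied by the new piecewise $H^1$ a~priori estimate Theorem~\ref{aprioriestimate}.c, whose $h_{\max}^\gamma$ gain, combined with a careful expansion of $B_\NC$ at the exact solution $\Psi$ and a reduction property for the sum of the $\mu(T)$, produces a telescoping/geometric-series bound; this again requires $\delta$ small. One then sets $\overline\delta$ equal to the minimum of $\delta_0$ and the smallness thresholds coming out of \textbf{(A3)} and \textbf{(A4)}.

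Fix any $\overline\theta\in(0,\theta_0)$, say $\overline\theta:=\theta_0/2$. For each admissible $(\delta,\theta)$, Theorem~\ref{thmOptimalrates} gives \eqref{eq:optim} with some equivalence constants; it remains to choose them independently of $\delta$ and $\theta$. The direction $\gtrsim$ relies only on the overlay estimate and the mesh-closure estimate, whose constants depend solely on the NVB structure of $\cT_{\text{init}}$. The direction $\lesssim$ relies on the D\"orfler marking, the almost-minimality constant $C_{\text{am}}$, and $\Lstab,\Lred,\LdRel,\Lqo$; its dependence on $\theta$ enters only through the distance of $\theta$ from $\theta_0$ and is therefore controlled uniformly on $(0,\overline\theta]$. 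Finally, $\cT_0$ enters the abstract analysis only as the starting admissible triangulation, and the number of pre-asymptotic uniform refinement steps affects merely $|\cT_0|$, which is subtracted in \eqref{eq:optim}; hence no constant degrades as $\delta\to0$. Taking suprema over $\delta\in(0,\overline\delta]$ and $\theta\in(0,\overline\theta]$ leaves equivalence constants that depend only on $\Psi$, $\cT_{\text{init}}$, $\overline\delta$, $\overline\theta$, and $s$.

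The main obstacle is the quantitative control of the nonlinearity in \textbf{(A3)} and \textbf{(A4)} --- specifically, organising the companion/medius argument in \textbf{(A3)} so that every term localises to $\cT\setminus\widehat\cT$ up to the residuals already present in $\eta$ and making the absorption of the $B_\NC$-terms explicit in $\delta$, and, in \textbf{(A4)}, replacing the unavailable higher-order decay of the volume residual by the piecewise $H^1$ estimate of Theorem~\ref{aprioriestimate}.c with a bookkeeping of the nonlinearity precise enough to recover summability. A secondary point requiring care is that a single $\overline\delta$ must serve both arguments simultaneously and be compatible with the requirement $\overline\theta<\theta_0$, and that the equivalence constants in \eqref{eq:optim} indeed do not deteriorate as $\delta,\theta\to0$.
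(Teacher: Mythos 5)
Your proposal is correct and follows essentially the same route as the paper: verify \textbf{(A1)}--\textbf{(A4)} on $\bT(\cT_0)$ with constants depending only on $\cT_{\rm init}$ and $\Psi$, invoke Theorem~\ref{thmOptimalrates}, and then observe that $\cT_0$ enters the abstract analysis only through the Binev--Dahmen--DeVore mesh-closure constant, which is governed by shape regularity and hence inherited from $\cT_{\rm init}$ under uniform refinement, so the equivalence constants do not degrade as $\delta,\theta\to 0$.
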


\noindent The proof is based on Theorem~\ref{thmOptimalrates} {for small parameters $\delta$ and   $\overline\theta<\theta_0$} and will be completed in Subsection~\ref{ProofofTheoremthmOptimalratesAMFEM}.
 
\begin{rem}[pre-asymptotic range]
The convergence rate is an intrinsically asymptotic concept and does not deteriorate if
$\delta$ or $\theta$ in (AMFEM) are chosen far too small. The computational costs and  the 
overall pre-asymptotic range, however, crucially depend on $\delta$ and may become larger and
larger as $\delta$ approaches zero. In case of a regular solution close to a bifurcation point
(with multiple solutions of small difference) the  restrictions  
$\delta\le\overline{\delta}\le \min\{\delta_0,\delta_1\}$
from  Theorem~\ref{dis_stab}  and \ref{aprioriestimate} may already enforce  $\overline{\delta}$ 
to be very small. 
\end{rem}

\begin{rem}[nonlinear approximation]\label{rem:nonlinearapproximation}
 The equivalence \eqref{eq:optim} asserts optimal
 convergence rates (for $s>0$ is arbitrary) in terms of the error estimators.
 The efficiency in  Theorem~\ref{reliability_NC} transforms this to rate optimality
 with respect to nonlinear approximation classes  \cite{BinevDahmenDevore04}  of the total error
$  \trinl\Psi-\Psi_{\rm M}\trinr_{\rm pw} +{\rm osc}_0(f,\cT) $.
\end{rem}

\section{Proofs}
This section verifies {\bf  (A1)}-{\bf  (A4)}  and 
Theorem~\ref{thmOptimalratesAMFEM}.  
Throughout this section, $0<\delta\le \delta_0<0$ with  $\delta_0,\varepsilon_0 >0$ from 
Theorem~\ref{aprioriestimate} and  the 2-level notation of (the beginning of)  
Subsection~\ref{axioms} applies to
$\cT\in \bT(\delta)$, $\hT \in \bT(\cT)$,  $\Psi_{\rm M}=(u_{\rm M},v_{\rm M}) \in \bV(\T)$
with $\trinl \Psi-\Psi_{\rm M}\trinr_{\rm pw} \le \varepsilon_0$, 
$\widehat{\Psi}_{\rm M}=(\widehat{u}_{\rm M},\widehat{v}_{\rm M}) \in \bV(\hT)$
with $\trinl \Psi-\widehat{\Psi}_{\rm M}\trinr_{\rm pw} \le \varepsilon_0$, 
$\eta:=\eta(\cT,\bullet)$, and $\widehat\eta:=\eta(\hT,\bullet)$;
whereas  $\cT_\ell$, $\Psi_\ell$,   and 
$\eta_\ell:=\eta(\cT_\ell)$
refer to  the output of AMFEM. 

\subsection{Proof of  stability  (A1)}
The proofs of {\bf  (A1)} and {\bf (A2)} rely on triangle and Cauchy inequalities plus one lemma. 

\begin{lem}[discrete jump control {\cite[Lem. 5.2]{cc_hella_18}}]\label{djc} There exists a universal constant $C_{\rm jc}$, which depends on the shape regularity in $\TO$ and the degree $k \in {\mathbb N}_0$, such that any $\T \in \TO$ and $g \in P_k(\T)$ with its jumps 
$$
[g]_E=
\begin{cases}
(g|_{T_+})|_E - (g|_{T_-})|_E  \; \mbox {for } E \in \E(\Omega) \mbox{ with } E=\partial T_+ \cap \partial T_{-},\\
g|_E  \mbox { for } E \in \E(\partial \Omega) \cap \E(K) \\
\end{cases}
 $$
across any side $E \in {\E}$ (i.e., with respect to $\T \in \TO$)  satisfy 
$$ 
\mbox{}\hspace{3cm}
\sum_{K \in \T} |K|^{1/2} \sum_{E \in \E(K)} \|[g]_E\|^2_{L^2(E)} \le C_{\rm jc}^2\|g\|^2_{2}. 
\hspace{2cm}\qed
$$
\end{lem}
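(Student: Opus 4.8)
\emph{Proof plan.}
The plan is to reduce the claim to a scaled discrete trace inequality on a single triangle and then sum with the finite overlap of the edge patches. First I would establish the \emph{discrete trace inequality}: for a triangle $T \in \T$ with an edge $E \subset \partial T$ and any $p \in P_k(T)$,
\[
\|p\|_{L^2(E)}^2 \le C(k)\,|T|^{-1/2}\,\|p\|_{L^2(T)}^2 ,
\]
where $C(k)$ depends only on $k$ and the shape-regularity in $\TO$. This follows by an affine change of variables to a reference triangle, on which all norms on the finite-dimensional space $P_k$ are equivalent, together with the shape-regularity of $\TO$ (recall that at most $8|\cT_{\text{init}}|$ interior angles occur in any $\T \in \TO$, so that $|E| \approx |T|^{1/2} \approx h_T$ for every edge $E$ of $T$).

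Second, I would bound the contribution of a single edge. For an interior edge $E = \partial T_+ \cap \partial T_- \in \E(\Omega)$, a triangle inequality and the trace inequality above give
\[
\|[g]_E\|_{L^2(E)}^2 \le 2\|g|_{T_+}\|_{L^2(E)}^2 + 2\|g|_{T_-}\|_{L^2(E)}^2
\lesssim |T_+|^{-1/2}\|g\|_{L^2(T_+)}^2 + |T_-|^{-1/2}\|g\|_{L^2(T_-)}^2 ;
\]
for a boundary edge $E \in \E(\partial\Omega)$ the jump is the single trace from the attached triangle and the same bound holds (the missing partner contributes zero, consistent with the stated convention).

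Third, I would sum over all $K \in \T$ and all $E \in \E(K)$. For each $E \in \E(K)$ one has $|K|^{1/2} \approx |T|^{1/2}$ for every triangle $T$ with $E \subset \partial T$ (again shape-regularity), so $|K|^{1/2}\,\|[g]_E\|_{L^2(E)}^2 \lesssim \|g\|_{L^2(T_+)}^2 + \|g\|_{L^2(T_-)}^2$ with the one or two triangles $T_\pm$ adjacent to $E$. A fixed triangle $T \in \T$ enters this double sum only when $E$ is one of the at most three edges of $T$ and $K$ is one of the at most two triangles sharing $E$, hence only a bounded number of times; the finite overlap of the edge patches therefore yields
\[
\sum_{K \in \T} |K|^{1/2} \sum_{E \in \E(K)} \|[g]_E\|_{L^2(E)}^2
\lesssim \sum_{T \in \T} \|g\|_{L^2(T)}^2 = \|g\|_2^2 ,
\]
which is the assertion with $C_{\rm jc}$ the resulting constant, depending only on $\cT_{\text{init}}$ (through the shape-regularity) and on $k$.

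There is no genuine obstacle: this is a textbook combination of inverse-type estimates with a covering argument. The only points requiring (routine) care are the accounting of the finite overlap in the last step and the degenerate case of boundary edges, where the ``jump'' reduces to a single trace; both are dealt with by the conventions fixed in the statement.
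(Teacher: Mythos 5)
Your argument is correct and is exactly the standard proof of this estimate: a scaled discrete trace inequality on each triangle (reference-element scaling plus norm equivalence on $P_k$), a triangle inequality for the jump, and a finite-overlap count in the double sum. The paper itself does not prove the lemma but quotes it from \cite[Lem.~5.2]{cc_hella_18}, where the argument is the same as yours, so there is nothing to add; the only points worth the care you already gave them are the comparability $|K|\approx|T_\pm|$ of neighbouring triangles (guaranteed by the shape regularity of NVB refinements of $\cT_{\text{init}}$) and the boundary-edge convention.
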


\begin{thm}[Stability {\bf (A1)}]\label{a1} 
{\bf (A1)} holds for all $\cT\in\bT(\delta_0)$ and all $\widehat{\cT}\in\bT(\cT)$. 
\end{thm}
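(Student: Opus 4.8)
The plan is to reduce the left-hand side of \textbf{(A1)} to a sum, over the unchanged triangles $K\in\T\cap\hT$, of differences of the contributions in $\eta^2(\cdot,K)$, and to bound each by (a harmless power of the mesh-size times) $\delta(\T,\hT)$. First I would apply the reverse triangle inequality for the Euclidean norm twice: once to pass from $|\widehat\eta(\T\cap\hT)-\eta(\T\cap\hT)|$ to $\big(\sum_{K\in\T\cap\hT}|\widehat\eta(\hT,K)-\eta(\cT,K)|^2\big)^{1/2}$, and once more inside each summand to split off the volume piece $|K|\big(\|[u_\M,v_\M]+f\|_{2,K}^2+\|[u_\M,u_\M]\|_{2,K}^2\big)^{1/2}$ from the edge pieces $|K|^{1/4}\|[D^2_\NC u_\M]_E\tau_E\|_{L^2(E)}$ and $|K|^{1/4}\|[D^2_\NC v_\M]_E\tau_E\|_{L^2(E)}$ for $E\in\E(K)$; the load $f$ cancels in the volume piece.

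For the edge pieces the key observation is that the piecewise Hessian of a Morley function is piecewise constant and hence unchanged under refinement: for $E\in\E(K)$ with $K\in\T\cap\hT$, the tangential jump $[D^2_\NC u_\M]_E\tau_E$ computed with respect to $\cT$ agrees with the one computed with respect to $\hT$, because the $\hT$-neighbour of $K$ across $E$ is a sub-triangle of the $\cT$-neighbour on which $D^2 u_\M$ carries the same constant value. Therefore the difference of jumps equals $[D^2_\NC(u_\M-\widehat u_\M)]_E\tau_E$ with all jumps taken with respect to $\hT$, and $u_\M-\widehat u_\M\in P_2(\hT)$. Extending the sum over $K\in\T\cap\hT$ to all of $\hT$ and invoking the discrete jump control of Lemma~\ref{djc} componentwise for the $P_0(\hT)$ matrix field $D^2_\pw(u_\M-\widehat u_\M)$ (and likewise for $v$), while noting that $|K|^{1/2}$ is the same whether $K$ is regarded in $\T$ or in $\hT$, bounds the edge contribution by $C_{\rm jc}^2\big(\|D^2_\pw(u_\M-\widehat u_\M)\|_2^2+\|D^2_\pw(v_\M-\widehat v_\M)\|_2^2\big)=C_{\rm jc}^2\,\delta^2(\T,\hT)$.

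For the volume piece I would expand the bracket difference by bilinearity, $[u_\M,v_\M]-[\widehat u_\M,\widehat v_\M]=[u_\M-\widehat u_\M,v_\M]+[\widehat u_\M,v_\M-\widehat v_\M]$, and similarly for $[u_\M,u_\M]-[\widehat u_\M,\widehat u_\M]$. On $K\in\T\cap\hT$ both $u_\M$ and $\widehat u_\M$ are single quadratic polynomials, so a term such as $[u_\M-\widehat u_\M,v_\M]=\cof\big(D^2(u_\M-\widehat u_\M)\big):D^2 v_\M$ is constant on $K$; writing $\|M\|_{L^2(K)}=|K|^{1/2}|M|$ for a constant $M$ and $|D^2 v_\M|_K|=|K|^{-1/2}\|D^2 v_\M\|_{2,K}$ gives $|K|^2\|[u_\M-\widehat u_\M,v_\M]\|_{2,K}^2\lesssim |K|\,\|D^2(u_\M-\widehat u_\M)\|_{2,K}^2\,\|D^2 v_\M\|_{2,K}^2\le h_{\max}^2\,\|D^2(u_\M-\widehat u_\M)\|_{2,K}^2\,\|D^2 v_\M\|_{2,K}^2$. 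Summation over $K$ with $\max_K\|D^2_\pw v_\M\|_{2,K}\le\trinl v_\M\trinr_\NC\le\trinl\Psi\trinr+\varepsilon_0\lesssim 1$ (and the analogous bounds for $\widehat u_\M,\widehat v_\M$ from Theorem~\ref{aprioriestimate}.a) together with $\sum_K\|D^2_\pw(u_\M-\widehat u_\M)\|_{2,K}^2\le\delta^2(\T,\hT)$ shows the volume contribution is $\lesssim h_{\max}^2\,\delta^2(\T,\hT)\le\delta_0^2\,\delta^2(\T,\hT)$. Adding the volume and edge estimates and taking a square root yields $|\widehat\eta(\T\cap\hT)-\eta(\T\cap\hT)|\le\Lstab\,\delta(\T,\hT)$ with $\Lstab$ depending only on $\cT_{\rm init}$, $\Psi$, $\varepsilon_0$, and $\delta_0$.

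The only genuinely delicate point is the one flagged above: an edge of an unchanged triangle may border a \emph{refined} triangle, so a priori the two estimators use edge jumps with respect to different meshes. The resolution, namely the invariance under refinement of the piecewise-constant Hessian of a fixed Morley function, is what makes Lemma~\ref{djc} applicable verbatim and is the one non-routine ingredient; everything else is bookkeeping with triangle and Cauchy inequalities.
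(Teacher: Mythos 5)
Your proposal is correct and follows essentially the same route as the paper: reverse triangle inequalities to reduce to per-triangle differences, bilinearity of the von K\'arm\'an bracket plus the inverse estimate (an equality for piecewise-constant Hessians) and the a~priori bound $M=\trinl\Psi\trinr+\varepsilon_0$ for the volume terms, and the discrete jump control of Lemma~\ref{djc} applied on $\widehat{\cT}$ to $D^2_{\rm pw}(\widehat{\Psi}_{\rm M}-\Psi_{\rm M})\in P_0(\widehat{\cT};\mathbb{R}^{2\times 2})$ for the edge terms. Your explicit justification that the $\cT$- and $\widehat{\cT}$-jumps of the coarse Morley Hessian coincide on edges of unchanged triangles is the one point the paper delegates to a citation, and you handle it correctly.
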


\begin{proof}
The proof follows \cite{cc14,CKNS08,cc_hella_18}  for linear second-order problems 
with focus on  the nonlinear contributions.
The definitions of $\widehat{\eta}(\T \cap \hT) $ and $\eta(\T \cap \hT)$ 
in Subsection~\ref{axioms}
and a first reverse triangle inequality in  ${\mathbb R}^m$ with the number $m:= |{\T \cap \hT}|$  of triangles in ${\T \cap \hT}$ lead to
\[
 |{\widehat{\eta}(\T \cap \hT) - \eta(\T \cap \hT)}|^2 \le 
\text{$ \sum $}_{K \in \T \cap \hT} 
 \left( \widehat{\eta}(K)- \eta(K)\right)^2.
\]
For  $K \in \T \cap \widehat{T}$,
each of the terms $\widehat{\eta}(K)$ and $\eta(K)$ allows  
a second and third reverse triangle inequality in ${\mathbb R}^8$ and  
$L^2(K)$ or $L^2(E)$ for $E\in\cE(K)$.
This  and  {$|[D^2({\widehat \varphi}_{\M} -{\varphi}_{\M}) ]_E \tau_E| \le
 |[D^2({\widehat \varphi}_{\M} -{\varphi}_{\M}) ]_E |$  for $\varphi=u,v$} with the Frobenius matrix 
 norm $|\bullet|$ in ${\mathbb R}^{2 \times 2}$ 
 result in 
\begin{align} \label{estimator123}
& \big( \widehat{\eta}(K)- \eta(K)\big)^2  
\le |K|^2 \|[\widehat{u}_{\rm M}, \widehat{v}_{\rm M}] - [u_{\M},v_{\M}]\|^2_{2,K} + |K|^2 \|[\widehat{u}_{\rm M}, \widehat{u}_{\rm M}] - [u_{\M},u_{\M}]\|^2_{2,K} \nonumber \\
& \; + |K|^{1/2} \sum_{E \in \E(K)} 
\bigg(\|[D^2({\widehat u}_{\rm M} -{u}_{\rm M}) ]_E  \|^2_{L^2(E)} +
 \|[D^2({\widehat v}_{\rm M} -{v}_{\rm M}) ]_E \|^2_{L^2(E)}\bigg).
\end{align}
Triangle, Cauchy-Schwarz inequalities, and an inverse estimate (here indeed an equality
for $\|D^2 v_\M\|_{2,K}$ is $|K|^{1/2}$ times the Frobenius norm of the constant Hessian 
$D^2 v_\M|_K $) 
show 
\begin{align*} 
  &\|[\widehat{u}_{\M}, \widehat{v}_{\M}] - [u_{\M},v_{\M}] \|_{2,K} \le \| [\widehat{u}_{\M} - u_{\M},\widehat{v}_{\M} ]\|_{2,K} + 
\|[u_\M, \widehat{v}_{\M} - v_{\M}]\|_{2,K}  \nonumber \\
& \le \|   D^2(\widehat{u}_{\M} - u_{\M})\|_{2,K} \| D^2 \widehat{v}_{\M}\|_{\infty,K} + 
\| D^2 {u}_{\M}\|_{\infty,K} \| D^2(\widehat{v}_{\M} - v_{\M})\|_{2,K}  \nonumber \\
& = |K|^{-1/2} (  \|D^2(\widehat{u}_{\M} - u_{\M})\|_{2,K} \| D^2 \widehat{v}_{\M}\|_{2,K} + 
\| D^2 {u}_{\M}\|_{2,K} \|D^2(\widehat{v}_{\M} - v_{\M})\|_{2,K})
\nonumber \\
& \le  |K|^{-1/2}  ( \|D^2 {u}_{\M} \|^2_{2,K} + \|D^2 \widehat{v}_{\M} \|^2_{2,K} )^{1/2} 
 \|D^2(\widehat{\Psi}_{\M} - \Psi_{\M})\|_{2,K}.
\end{align*}
This proves an estimate for the first  term on the right-hand side of \eqref{estimator123},
\begin{align} 
|K|^2 \|[\widehat{u}_{\M}, \widehat{v}_{\M}] - [u_{\M},v_{\M}] \|^2_{2,K} \le |K| (\| D^2_\NC \Psi_\M\|^2_{2,K} + 
\|D^2_\NC \widehat{\Psi}_\M \|^2_{2,K}) \|D_\NC^2(\widehat{\Psi}_{\M} - \Psi_{\M})\|^2_{2,K}. \nonumber
\end{align}
The substitution of $v_\M$ (resp. $\widehat{v}_\M$) by $u_\M$  (resp. $\widehat{u}_\M$) 
provides an analog inequality.  
The sum of those two estimates and the sum over all $K \in \T \cap \hT$ 
with $ |K| \le h_{\rm max}^2\le  |\Omega|$  show 
\begin{align*} 
& \sum_{K \in \T \cap \hT} |K|^2 \left(\|[\widehat{u}_{\M}, \widehat{v}_{\M}] - [u_{\M},v_{\M}] \|^2_{2,K}  + \|[\widehat{u}_{\M}, \widehat{u}_{\M}] - [u_{\M},u_{\M}] \|^2_{2,K} \right)
\nonumber \\
& \quad \le \sum_{K \in \T \cap \hT} 2 |K| (\| D^2_\NC \Psi_\M\|^2_{2,K} + 
\|D^2_\NC \widehat{\Psi}_\M \|^2_{2,K}) 
\|D_\NC^2(\widehat{\Psi}_{\M} - \Psi_{\M})\|^2_{2,K} \nonumber \\
& \quad \le 2 h^2_{\rm max} (\trinl \Psi_{\M}\trinr_{\NC}^2 + \trinl \widehat{\Psi}_{\M}\trinr_{\NC}^2 )
\trinl \widehat{\Psi}_{\M} - \Psi_{\M} \trinr^2_{\NC} 
\le 4 h^2_{\rm max}  M^2 \delta^2(\T,\hT)
\end{align*}
with the abbreviation  $M:= \trinl {\Psi}  \trinr+\varepsilon_0 \lesssim \|f\|_{-1}$. 

The analysis of the jump terms in  \eqref{estimator123} 
is the same as in \cite{CCDGJH14,DG_Morley_Eigen}. 
With the substitution of $\cT$ by $\hT$, Lemma~\ref{djc} applies (componentwise)   to the jump contributions 
 $D_{\NC}^2(\widehat{\Psi}_\M- \Psi_\M) \in P_0(\widehat{\T}; {\mathbb R}^{2\times 2})$ 
 in the sum of \eqref{estimator123} over all $K\in\cT\cap\hT$.
This proves {\bf (A1)} with   $\Lambda^2_1:= C_{\rm jc}^2   + 4M^2 |\Omega|$.  
\end{proof}

\begin{rem}[volume terms]\label{remvolumeterms0fA1}   
Subsection~\ref{subsectprelinimariesProofofquasiorthogonality(A4)} revisits 
the above proof for the volume terms $\mu^2(K)$   in $\eta^2(K)$,  
 \[
\mu^2(K):=  |K|^2\left( \|[u_\M,v_\M]+f \|_{2,K}^2+\|[u_\M,u_\M]\|_{2,K}^2\right) 
\quad\text{for all }K\in\cT.
\]
The formula defines  the volume contributions  $\widehat\mu^2(T)$ in  $\widehat\eta^2(T)$ 
with the substitution of  $u_\M,v_\M,K$  by  $\widehat{u}_\M,  \widehat{v}_\M, T$  for  $T\in\hT$.
The proof of {\bf (A1)}  shows   the refined estimate
\begin{equation}\label{eqnewccA1forproofofA4}
| \widehat\mu (\cT\cap\hT)-\mu(\cT\cap\hT)|
\le 2h_{\max}M\delta(\cT,\hT)
\end{equation} 
with an adaptation of the sum convention \eqref{eq:def_estim} to define $\mu(\cT\cap\hT)$
resp. $\widehat\mu(\cT\cap\hT)$.
\end{rem}

\subsection{Proof of reduction (A2)}
The triangle $T\in\hT\setminus\cT$ is included in exactly one $K\in\cT$ in the NVB
refinement and $T\subsetneqq K$ proves $|T|\le |K|/2$ to generate the reduction factor
$2^{-1/4}$ displayed in {\bf (A2)}.

\begin{thm}[Reduction {\bf (A2)}] {\bf (A2)} 
holds for all $\cT\in\bT(\delta_0)$ and all $\widehat{\cT}\in\bT(\cT)$.
\end{thm}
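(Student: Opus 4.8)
The plan is to follow the standard template for axiom \textbf{(A2)} in linear problems (as in \cite{cc14,CKNS08,cc_hella_18}) and isolate the nonlinear volume terms, which are handled exactly as in the proof of \textbf{(A1)}. First I would split $\widehat\eta^2(T)$ for $T\in\hT\setminus\cT$ into its volume part $\widehat\mu^2(T)$ and its jump part, say $\widehat\jmath^2(T):=|T|^{1/2}\sum_{E\in\cE(T)}(\|[D_\NC^2\widehat u_\M]_E\tau_E\|_{L^2(E)}^2+\|[D_\NC^2\widehat v_\M]_E\tau_E\|_{L^2(E)}^2)$, and treat the two contributions separately before recombining with a triangle inequality in $\mathbb R^{|\hT\setminus\cT|}$.

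For the jump part I would first pass to the discrete solution $\Psi_\M\in\bV(\cT)$ as a comparison function: a triangle inequality in $L^2(E)$ for each edge gives $\|[D_\NC^2\widehat u_\M]_E\tau_E\|_{L^2(E)}\le\|[D_\NC^2 u_\M]_E\tau_E\|_{L^2(E)}+\|[D_\NC^2(\widehat u_\M-u_\M)]_E\tau_E\|_{L^2(E)}$ and similarly for $v$, but one must be careful that $u_\M$ need not be continuous across the new edges inside a refined $K\in\cT$; on such interior edges the ``jump'' of $D_\NC^2 u_\M$ with respect to $\hT$ is the jump of a function that is smooth on $K$, hence zero, so only the original edges of $\cT$ contribute to the $u_\M$-part. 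For those original edges, each $E\in\cE(K)$ with $K\in\cT$ refined is split into (at most two) sub-edges of $\hT$, so $\|[D_\NC^2 u_\M]_E\tau_E\|_{L^2(E')}^2$ over the sub-edges $E'\subset E$ sums to $\|[D_\NC^2 u_\M]_E\tau_E\|_{L^2(E)}^2$; combined with the mesh-size reduction $|T|\le|K|/2$, i.e.\ $|T|^{1/2}\le 2^{-1/2}|K|^{1/2}$, and the fact that each original edge has two sub-edges so one picks up at worst a factor $2$, one obtains $2^{-1/2}\cdot 2=\sqrt2$ — which is too large. The correct bookkeeping is the standard one: for each new triangle $T\subsetneq K$, its three edges either lie on $\partial K$ (sub-edges, with the area halving giving $2^{-1/4}$ in the $\eta$-norm) or are interior to $K$ (no $u_\M$-jump), and summing $\sum_{T\in\hT\setminus\cT}$ the sub-edge contributions over all $T$ inside a fixed $K$ recovers exactly $\sum_{E\in\cE(K)}\|\cdot\|_{L^2(E)}^2$ without multiplicity blow-up because distinct sub-edges are disjoint; this is the point established in \cite[Lemma 4.7 or similar]{CKNS08}. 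Thus the $u_\M,v_\M$-jump part of $\widehat\eta(\hT\setminus\cT)$ is bounded by $2^{-1/4}$ times the jump part of $\eta(\cT\setminus\hT)$, while the remaining piece involving $[D_\NC^2(\widehat u_\M-u_\M)]_E$ and $[D_\NC^2(\widehat v_\M-v_\M)]_E$ is controlled via Lemma~\ref{djc} (applied with $\hT$ and $k=0$, componentwise) by $C_{\rm jc}\,\trinl\widehat\Psi_\M-\Psi_\M\trinr_\NC=C_{\rm jc}\,\delta(\cT,\hT)$, up to the harmless factor $h_{\max}^{1/2}$ difference between the $\eta$-weighting $|T|^{1/2}$ and the $L^2$-weighting.

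For the volume part I would invoke Remark~\ref{remvolumeterms0fA1}: write $\widehat\mu(T)\le\mu_{\cT\!\restriction}(T)+|\widehat\mu(T)-\mu_{\cT\!\restriction}(T)|$ where $\mu_{\cT\!\restriction}(T)$ uses $u_\M,v_\M$ (restricted to $T$) and $|T|$ in place of $|K|$. Since $u_\M,v_\M$ are the $\cT$-discrete solution, $\sum_{T\in\hT\setminus\cT,\,T\subset K}|T|^2\|\,\cdot\,\|_{2,T}^2\le(|K|/2)\sum_{T\subset K}\|\,\cdot\,\|_{2,T}^2=(|K|/2)\|\,\cdot\,\|_{2,K}^2\le 2^{-1}|K|^2\|\,\cdot\,\|_{2,K}^2$ (using $|T|\le|K|/2$), giving a factor $2^{-1/2}\le 2^{-1/4}$ for the volume part of $\mu$; and the difference $\sum_{T\in\hT\setminus\cT}|\widehat\mu(T)-\mu_{\cT\!\restriction}(T)|^2$ is bounded, by the same Lipschitz estimate for the von K\'arm\'an bracket as in the proof of \textbf{(A1)} (inverse estimate $\|D^2\widehat\varphi_\M\|_{\infty,T}=|T|^{-1/2}\|D^2\widehat\varphi_\M\|_{2,T}$ and $|T|\le|\Omega|$), by $4h_{\max}^2 M^2\delta^2(\cT,\hT)$ with $M:=\trinl\Psi\trinr+\varepsilon_0$. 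Assembling, $\widehat\eta(\hT\setminus\cT)\le 2^{-1/4}\eta(\cT\setminus\hT)+\Lred\,\delta(\cT,\hT)$ with $\Lred^2\approx C_{\rm jc}^2+M^2|\Omega|$; the dependence is on $\cT_{\rm init}$ and $\Psi$ only.

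The main obstacle I anticipate is the combinatorial sub-edge bookkeeping that produces the sharp reduction factor $2^{-1/4}$ rather than a larger constant: one must confirm that the refined edges of $\hT$ inside a bisected $K$ are disjoint and that interior new edges carry no $u_\M$- or $v_\M$-jump (because $D_\NC^2 u_\M$ is constant on each $K\in\cT$), so that summing over $T\in\hT\setminus\cT$ inside $K$ reproduces $\sum_{E\in\cE(K)}\|[D_\NC^2 u_\M]_E\tau_E\|_{L^2(E)}^2$ exactly, weighted by $|T|^{1/2}\le 2^{-1/2}|K|^{1/2}$. This is entirely analogous to the linear scalar Morley case treated in \cite{CCDGJH14,DG_Morley_Eigen}, and the nonlinear volume terms add nothing essentially new beyond the Lipschitz bound for $[\bullet,\bullet]$ already used for \textbf{(A1)}; hence I would keep the exposition brief and refer to \cite{cc14,CKNS08,cc_hella_18,DG_Morley_Eigen} for the routine parts.
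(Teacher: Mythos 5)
Your proposal is correct and follows essentially the same route as the paper: a triangle inequality separating the coarse discrete solution restricted to the fine triangles (this yields the factor $2^{-1/4}$ from $|T|\le|K|/2$ together with the disjoint sub-edge bookkeeping and the vanishing of the coarse-Hessian jumps on new interior edges) from the difference terms, which are then controlled exactly as in \textbf{(A1)} via the Lipschitz-type bound for the von K\'arm\'an bracket, the inverse estimate, and Lemma~\ref{djc} applied on $\hT$. The only blemish is the intermediate chain $(|K|/2)\|\cdot\|_{2,K}^2\le 2^{-1}|K|^2\|\cdot\|_{2,K}^2$ in your volume estimate, which is dimensionally wrong (it would require $|K|\ge 1$); the intended and correct bound follows directly from $|T|^2\le|K|^2/4\le 2^{-1/2}|K|^2$.
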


\begin{proof}
Given any triangle $K \in \T \setminus \hT$, the square of the error estimator for 
the $m\ge 2$ finer triangles $T \in \hT (K):= \{T \in \hT: T \subset K\}$  reads
\begin{align*}
 \widehat{\eta}^2(\hT (K) ) & =\sum_{T \in  \hT (K) }   \bigg( |T|^2\left(\left\|[\widehat{u}_\M,
 \widehat{v}_\M]+f\right\|_{2,T}^2+\left\|[\widehat{u}_\M,\widehat{u}_\M]\right\|_{2,T}^2\right) \nonumber \\
& \; \;+ |T|^{1/2} \sum_{F \in {\cal E}(T) }\left(\|D^2 \widehat{u}_\M]_F\tau_F\|_{L^2(F)}^2+\|[D^2 \widehat{v}_\M]_F\tau_F\|_{L^2(F)}^2\right)\bigg).
\end{align*}
Various triangle inequalities  (in Lebesgue and Euclid norms) show %
$\widehat{\eta}(\hT (K) )\le S_1+S_2$ for 
\begin{align*} 
 S_1^2&:=   \sum_{T \in \hT(K)} 
\bigg( |T|^2\left(\left\|[{u}_\M,
 {v}_\M]+f\right\|_{2,T}^2+\left\|[{u}_\M,{u}_\M]\right\|_{2,T}^2\right) \nonumber \\
& \qquad \; + |T|^{1/2} \sum_{F \in {\cal E}(T) }\left(\|[D^2 {u}_\M]_F\tau_F\|_{L^2(F)}^2+\|[D^2 {v}_\M]_F\tau_F\|_{L^2(F)}^2\right) \bigg) 
\le 2^{-1/2} \eta^2(K).
 \end{align*}
The proof of this utilises $|T|^{1/2}\le 2^{-1/2} |K|^{1/2}$ and a careful rearrangement of the jumps 
(that vanish over edges $E\in\cE(T)$ inside $K$ and sum up to the $L^2$
contribution along $\partial K$) and the volume contribution. The second term 
\begin{align*} 
S_2^2 & := \sum_{T \in \hT(K)}  \bigg( |T|^2 (\|[\widehat{u}_{\M}, \widehat{v}_{\M}] - [u_{\M},v_{\M}]\|^2_{2,T} + \|[\widehat{u}_{\M}, \widehat{u}_{\M}] - [u_{\M},u_{\M}]\|^2_{2,T}) \nonumber \\
&+ |T|^{1/2} \sum_{F \in \E(T)} \bigg(\|[D^2({\widehat u}_{\M} -{u}_{\M}) ]_F \tau_F \|^2_{L^2(F)} +
 \|[D^2({\widehat v}_{\M} -{v}_{\M}) ]_F \tau_F \|^2_{L^2(F)} \bigg) 
 \end{align*}
is analysed as in the previous subsection. 
The arguments eventually prove 
\begin{align*}
 \sum_{T \in \hT(K)}  |T|^2 \|[\widehat{u}_{\M}, \widehat{v}_{\M}] - [u_{\M},v_{\M}]\|^2_{2,T}
\le |K| M^2  \|D_\NC^2(\widehat{\Psi}_{\M} - \Psi_{\M})\|^2_{2,K}
\end{align*} 
and the analog estimate with 
$v_\M$ (resp. $\widehat{v}_\M$) substituted by $u_\M$  (resp. $\widehat{u}_\M$). 
The analysis of the jump terms 
is the same as in \cite{CCDGJH14,DG_Morley_Eigen}
and  Lemma~\ref{djc} (applied to $\hT$ rather than $\cT$)
eventually leads to  {\bf (A2)} with  
 $\Lambda^2_2:= C_{\rm jc}^2 + 2M^2 |\Omega|$. 
\end{proof}

\begin{rem}[assumptions]   
The restriction to $\cT\in\bT(\delta_0)$ in {\bf (A1)} -{\bf (A2)} guarantees the definition of the error estimators
via the discrete solution through Theorem~\ref{aprioriestimate}. 
This is exclusively  for notational consistency:  {\bf (A1)} -{\bf (A2)}
hold for any $\Psi_\M\in V(\cT)$ and $\widehat{\boldmath \Psi}_\M\in V(\hT) $ and solely 
$\Lambda_1,\Lambda_2$ depend on a universal upper bound $2M$ for 
$\trinl   {\boldmath\Psi}_\M  \trinr_{\NC} + \trinl   \widehat{\boldmath \Psi}_\M  \trinr_{\NC}$.
\end{rem}

\begin{rem}[volume terms]\label{remvolumeterms0fA2}   
Subsection~\ref{subsectprelinimariesProofofquasiorthogonality(A4)} revisits 
the above arguments solely for the volume terms $\mu^2(K)$   in $\eta^2(K)$  
for $K\in\cT$ (resp.  $\widehat\mu^2(T)$ in  $\widehat\eta^2(T)$  for $T\in\hT$)
from Remark~\ref{remvolumeterms0fA1}. 
With an adaptation of the sum convention \eqref{eq:def_estim} for $\mu$ and $\widehat\mu$, 
the proof of {\bf (A2)} shows  
\begin{equation}\label{eqnewccA2forproofofA4}
\widehat\mu (\hT\setminus \T)\le 2^{-1/2}\mu(\cT\setminus\hT)
+  2^{1/2} h_{\max}M\delta(\cT,\hT).
\end{equation} 
\end{rem}

\subsection{Proof of discrete reliability (A3)}
The parameters $\delta_3$ and $\Lambda_3$ in the following version of {\bf (A3)} depend on 
the regular solution $\Psi$ and its regularity in Theorem~\ref{ap}, on  $\delta_0,\varepsilon_0$ 
(resp. $\delta_1,\beta_1$)
from Theorem~\ref{aprioriestimate} (resp. Theorem~\ref{dis_stab}), on $\T_\text{init}$ and $\Omega$ 
with the regularity index $\gamma$. 

\begin{thm}[discrete reliability {\bf (A3)}]\label{thma3}
There exists positive  $\delta_3\le \min\{\delta_0,\delta_1\}$ and $\Lambda_3$ 
such that
$\delta^2(\T, \hT) \leq \LdRel \eta^2( \T \setminus \hT )$ holds 
for any $\cT\in \bT(\delta_3)$ with refinement  $ \hT \in \TT{\T}$.
\end{thm}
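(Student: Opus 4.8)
The plan is to prove the discrete reliability \textbf{(A3)} by a duality argument based on the discrete inf-sup stability of Theorem~\ref{dis_stab}, transferred from $\cT$ to $\hT$, together with the companion operator for \emph{discrete} Morley functions from Lemma~\ref{hctenrich}. First I would fix $\cT\in\bT(\delta_3)$ with refinement $\hT\in\TT{\cT}$ and set $e_\M:=\widehat\Psi_\M-I_{\widehat\M}\Psi_\M\in\bV(\hT)$ (or work directly with $\widehat\Psi_\M-\Psi_\M$, splitting off the nonconforming part), so that $\delta(\cT,\hT)=\trinl\widehat\Psi_\M-\Psi_\M\trinr_\pw$ is controlled once I bound $\trinl e_\M\trinr_\pw$ and the interpolation error $\trinl\Psi_\M-I_{\widehat\M}\Psi_\M\trinr_\pw$. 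For $\delta_3$ small enough the discrete inf-sup condition \eqref{dis-inf-sup} holds on $\hT$ (with $\Psi$ the exact regular solution, and with constant $\beta_1$), so there is a test function $\Phi_\M\in\bV(\hT)$ with $\trinl\Phi_\M\trinr_\pw=1$ and
\[
\beta_1\trinl e_\M\trinr_\pw\le A_\pw(e_\M,\Phi_\M)+2B_\pw(\Psi,e_\M,\Phi_\M).
\]
The right-hand side is then rewritten using the two discrete equations \eqref{vformdNC} on $\cT$ and on $\hT$, adding and subtracting $\Psi_\M$ and using $B_\pw(\Psi,\cdot,\cdot)-B_\pw(\Psi_\M,\cdot,\cdot)$ plus $B_\pw(\widehat\Psi_\M,\cdot,\cdot)-B_\pw(\Psi,\cdot,\cdot)$ to reduce everything to a residual of $\Psi_\M$ tested against $\Phi_\M$ minus its companion (or minus a suitable conforming approximation).

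The key step is the \emph{medius-type} estimate for the conforming companion: write $\Phi_\M=(\Phi_\M-E_{\widehat\M}\Phi_\M)+E_{\widehat\M}\Phi_\M$, where $E_{\widehat\M}\Phi_\M\in\bhto$. Since $E_{\widehat\M}\Phi_\M$ is conforming, one can integrate by parts in $A_\pw(\Psi_\M,E_{\widehat\M}\Phi_\M)$ and, using $\Delta^2$-piecewise smoothness of $\Psi_\M$ (piecewise quadratic, so $\Delta^2_\pw\Psi_\M=0$), collapse the volume terms to data $f+[u_\M,v_\M]$ and $[u_\M,u_\M]$ and the edge terms to the jumps $[D^2_\pw u_\M]_E\tau_E$, $[D^2_\pw v_\M]_E\tau_E$ — exactly the quantities in $\eta^2$. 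Crucially, since $\Psi_\M\in\bV(\cT)$ and $E_{\widehat\M}\Phi_\M - \Pi_0(E_{\widehat\M}\Phi_\M-\Phi_\M)$-type orthogonalities hold by Lemma~\ref{hctenrich}(b),(c), all contributions from triangles and edges in $\cT\cap\hT$ cancel (using the refinement structure and that $\Psi_\M$ is unchanged and smooth there, cf. Remark~\ref{rem:new}), so only the local estimator contribution $\eta(\cT\setminus\hT)$ survives. The term with $\Phi_\M-E_{\widehat\M}\Phi_\M$ is handled by Lemma~\ref{hctenrich}(e) (jump control for the \emph{finer} triangulation $\hT$), again localising to the refined region, plus Lemma~\ref{djc}; the nonlinear differences $B_\pw(\Psi-\Psi_\M,\cdot,\cdot)$, $B_\pw(\Psi-\widehat\Psi_\M,\cdot,\cdot)$ are absorbed via Lemma~\ref{b_dG} and the reliability bound of Theorem~\ref{reliability_NC}, at the cost of a factor $\lesssim\varepsilon_0$ or $\lesssim h_{\max}^\gamma$, which for $\delta_3$ small can be absorbed into the left-hand side.

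The remaining ingredient is the interpolation term $\trinl\Psi_\M-I_{\widehat\M}\Psi_\M\trinr_\pw$ and the boundary data/oscillation terms: these are estimated directly using Lemma~\ref{hctenrich}(e) applied to $\Psi_\M\in\bV(\cT)\subset\bV(\hT)+\cdots$ (or by the Morley interpolation estimate of Lemma~\ref{Morley_Interpolation}), and since $I_{\widehat\M}$ only changes $\Psi_\M$ on refined triangles (Remark~\ref{rem:new}), one again gets $\lesssim\eta(\cT\setminus\hT)$. Collecting all bounds, choosing $\delta_3\le\min\{\delta_0,\delta_1\}$ small enough that the inf-sup constant on $\hT$ is uniformly positive and the nonlinear perturbations are absorbed, yields $\delta^2(\cT,\hT)\le\Lambda_3\,\eta^2(\cT\setminus\hT)$ with $\Lambda_3$ depending only on $\beta_1$, the shape regularity, the companion/jump constants, $\varepsilon_0$, and $\|f\|_{-1}$. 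The main obstacle I expect is the careful bookkeeping showing that \emph{all} contributions over $\cT\cap\hT$ (both from the conforming companion splitting and from the $I_{\widehat\M}$ interpolation) genuinely cancel or telescope, so that the final bound involves only $\eta(\cT\setminus\hT)$ and not the full $\eta(\cT)$ — this is where the local projection property of Remark~\ref{rem:new} and the edge-patch localisation in Lemma~\ref{hctenrich}(e) must be exploited with precision, and where the nonlinearity makes the algebra genuinely heavier than in the linear biharmonic case.
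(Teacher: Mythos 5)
Your overall skeleton---discrete inf-sup on $\bV(\hT)$, use of both discrete equations, companion operators, and absorption of the nonlinear perturbation into the left-hand side---matches the paper's strategy, but the central localisation step is argued incorrectly, and this is a genuine gap. You propose to integrate by parts in $A_\pw(\Psi_\M, E_\M\Phi_\M)$ with the conforming companion of the \emph{fine test function} and claim that all volume and edge contributions over $\cT\cap\hT$ cancel because ``$\Psi_\M$ is unchanged and smooth there''. They do not: $E_\M\Phi_\M\in\bhto$ is a global conforming function, the tangential jumps $[D^2_\pw u_\M]_E\tau_E$ are nonzero on the common edges $\cE\cap\widehat{\cE}$, and Remark~\ref{rem:new} concerns $\widehat{v}_\M-I_\M\widehat{v}_\M$, not the companion of $\Phi_\M$. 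Likewise, Lemma~\ref{hctenrich}.e localises $\Phi_\M-E_\M\Phi_\M$ to jumps of the \emph{test function} over its own edge patches, which are spread over all of $\hT$, not over the refined region. The same problem affects your ``interpolation term'': $\widehat I_\M$ is defined on $\hto+\M(\hT)$, not on $\M(\cT)$, so $\widehat I_\M\Psi_\M$ is not defined without first passing through a companion, and even then one needs a quantitative bound for $\trinl \Psi_\M - \widehat I_\M(\cdots)\Psi_\M\trinr_\NC$ that involves only the refined region---this is a nontrivial result, not a routine consequence of Lemma~\ref{Morley_Interpolation}.

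The paper supplies exactly the two localisation mechanisms you are missing. First, it uses the prolongation $\Psi_\M^*:=\widehat I_\M(J_2\Psi_\M)\in\bV(\hT)$ (companion of the \emph{coarse solution} followed by fine interpolation) together with the estimate $\trinl\Psi_\M^*-\Psi_\M\trinr_\NC\lesssim\bigl(\sum_{E\in\cE\setminus\widehat{\cE}}|\omega_E|^{1/2}\|[D^2\Psi_\M]_E\tau_E\|^2_{L^2(E)}\bigr)^{1/2}\le\eta(\cT\setminus\hT)$ from \cite[Thm. 6.19]{CCP}, which confines the edge jumps to coarse-not-fine edges by construction. Second, after exploiting $A_\NC(\Psi_\M,\widehat{\bf y}_\M)=A_\NC(\Psi_\M,I_\M\widehat{\bf y}_\M)$ (the integral-mean property of $I_\M$) and both discrete equations, the surviving volume residual is tested against $\widehat{\bf y}_\M-I_\M\widehat{\bf y}_\M$, which vanishes a.e. on every unrefined triangle by Remark~\ref{rem:new}---that is where the localisation of the volume terms actually comes from, not from an integration by parts against $E_\M\Phi_\M$. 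Finally, absorb the nonlinear difference using the a~priori bound $\trinl\Psi-\Psi_\M\trinr_\NC\le C(\gamma,\Psi)h_{\max}^\gamma$ from Theorem~\ref{aprioriestimate} rather than the reliability of Theorem~\ref{reliability_NC}, which would reintroduce the full $\eta(\cT)$; note also that $\varepsilon_0$ is fixed by Theorem~\ref{aprioriestimate} and cannot be made small by shrinking $\delta_3$.
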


\begin{proof}
Given any  refinement  $ \hT \in \TT{\T}$ of $\T\in \bT$, 
the interpolation operator $I_{\rm M} $ 
of Lemma \ref{Morley_Interpolation} maps  $M(\hT)\to M(\cT)$. The converse 
operation in \cite{CCDGJH14,DG_Morley_Eigen} relies on a 
discrete Helmholtz decomposition.  
This paper follows   \cite{CCP} with 
a right-inverse  $\widehat{I}_{\rm M} E_{\rm M}$.
The key idea is  first to compute the companion operator $E_{\rm M} v_{\rm M}$ for some $v_\M\in \M(\cT)$
and second to apply the interpolation operator $\widehat{I}_{\rm M} $ of 
Lemma \ref{Morley_Interpolation} on the finer triangulation $\hT$ (rather than $\cT$).
This leads to $\widehat{I}_{\rm M} E_{\rm M}: {\rm M}(\T) \rightarrow {\rm M}(\hT)$ with
${I}_{\rm M}(\widehat{I}_{\rm M} E_{\rm M})=1$ in $\M(\cT)$. 

A modification of this idea is performed in   \cite[Def 6.9]{CCP}, \cite{DG_Morley_Eigen}
to define  an operator 
$J_2:P_2(K) \rightarrow HCT(K) +P_5(K)$ for each $K\in\cT$ such that 
 $\Psi_{\M}^*:=\widehat{I}_{\M}(J_2 \Psi_\M) \in \bV(\hT)$  
 is well defined \cite[Lem. 6.14]{CCP} and satisfies   \cite[Thm. 6.19]{CCP} that
\begin{align}\label{t32}
C_1^{-1}\trinl   {\Psi}^*_\M-\Psi_\M  \trinr_{\NC} \le 
\bigg( \sum_{E \in {\mathcal E} \setminus \widehat{{\mathcal E}}}
|\omega_E|^{1/2}_E  \|[D^2 \Psi_{\M}]_E  \tau_{E} \|^2_{L^2(E)}\bigg)^{1/2}  \le
\eta(\T \setminus \hT )
\end{align}
with the mesh-size factor $|\omega_E|^{1/2}\approx \text{\rm diam}(E)$ for any
edge $E$ with its edge-patch $\omega_E$ of area $|\omega_E|$
and  some universal  constant $C_1$ (that depends solely on $\cT_{\text{\rm init}}$).
This restricts the sum over all edges $E$ in  \eqref{t32} to those, which are coarse but not fine. 
The estimate  \eqref{t32}  and a triangle inequality imply
\begin{align}\label{triangle}
 \delta (\T,\widehat{\T}) & \le \trinl \widehat{\Psi}_\M - \Psi_{\M}^* \trinr_{\NC} 
 + C_1\eta(\T \setminus \hT ).
\end{align}
It  remains to control 
$\trinl\widehat{\Psi}_\M - \Psi_{\M}^* \trinr_{\NC} $ for  the Morley function 
$\widehat{\Psi}_\M - \Psi_{\M}^*\in \bV(\hT)$.
The discrete stability in Theorem~\ref{dis_stab} leads to  some 
${\widehat{\bf y}}_{\M} \in { {\mathcal V}}(\hT)$ with  
$\trinl {\widehat{\bf y}}_\M \trinr_{\NC} \le 1/ {\beta_1}$ and
\begin{align} \label{t}
\trinl \widehat{\Psi}_\M - \Psi_{\M}^* \trinr_{\NC}  &  = DN_h(\Psi, \widehat{\Psi}_\M- \Psi_{\M}^* , \widehat{\bf y}_\M). 
\end{align}
The boundedness in   
Lemma~\ref{b_dG}.a and $C_2:=(1+ 2\sqrt{2}C_{\text{\rm de}}\trinl \Psi  \trinr )/ {\beta_1} $ show
 \[
 DN_h(\Psi, \Psi_\M-\Psi_{\M}^*,\widehat{\bf y}_\M)\le C_2 \trinl   \Psi_\M-\Psi_{\M}^* \trinr_{\NC}
 \le C_1C_2 \eta(\T \setminus \hT )
 \]
with  \eqref{t32} in the last step.
The combination of this with \eqref{triangle}-\eqref{t} proves
\begin{align} \label{t2a1}
\delta (\T,\widehat{\T}) \le DN_h(\Psi, \widehat{\Psi}_\M-\Psi_\M,  \widehat{\bf y}_\M)
 + C_1(1+C_2) \eta(\T \setminus \hT ).
\end{align}
Recall that   $\Psi_{\rm M}\in\bV(\cT)$ (resp. 
$\widehat{\Psi}_{\rm M}\in\bV(\hT)$) solves the discrete problem 
with respect to $\T$ (resp.  $\hT$). 
Lemma \ref{Morley_Interpolation}.a  shows
$A_\NC(\Psi_\M,\widehat{\bf y}_\M ) = A_\NC(\Psi_\M,I_\M\widehat{\bf y}_\M )$, {for $I_\M: {\cal V}(\widehat{\cT} ) \rightarrow {\cal V}({\cT} ) $ defined in Section \ref{sec:int}}.
This and elementary algebra 
(with the symmetry of $B_{\NC}(\bullet,\bullet,\bullet)$ in the first two variables) lead to 
\begin{align} \label{t2a}
DN_h(\Psi, \widehat{\Psi}_{\text{\rm M}}-\Psi_{\text{\rm M}},  
\widehat{\bf y}_{\text{\rm M}})& 
=  B_{\NC}(2\Psi-\widehat{\Psi}_{\text{\rm M}}-\Psi_{\text{\rm M}}, 
\widehat{\Psi}_{\text{\rm M}}-\Psi_{\text{\rm M}}, \widehat{\bf y}_{\text{\rm M}})
\nonumber \\
&\hspace{-20mm}
+F(\widehat{\bf y}_{\text{\rm M}} -  I_{\text{\rm M}} \widehat{\bf y}_{\text{\rm M}}) 
- B_{\NC}({\Psi}_{\text{\rm M}}, {\Psi}_{\text{\rm M}}, \widehat{\bf y}_{\text{\rm M}} 
-  I_{\text{\rm M}} \widehat{\bf y}_{\text{\rm M}})   .
\end{align}
The a~priori error estimate 
$ \trinl \Psi - \widehat{\Psi}_{\text{\rm M}} \trinr_{\NC}$, 
$  \trinl \Psi -{\Psi}_{\text{\rm M}} \trinr_{\NC} \le C(\gamma,\Psi) h^\gamma_{\max} $
from  Theorem~\ref{aprioriestimate} in terms of 
the maximal mesh-size $h_{\max}$ 
and Lemma~\ref{b_dG}.a
result in  
\[
 B_{\NC}(2\Psi-\widehat{\Psi}_{\text{\rm M}}-\Psi_{\text{\rm M}}, 
\widehat{\Psi}_{\text{\rm M}}-\Psi_{\text{\rm M}}, \widehat{\bf y}_{\text{\rm M}})
\le  \sqrt{8}  \beta_1^{-1} C_{\text{\rm dea}} C(\gamma,\Psi) 
 h^\gamma_{\max}\,   \delta (\T,\widehat{\T}).
\]
The last two  contributions in \eqref{t2a} are volume residuals  with the test
function  $\widehat{\bf y}_\M -  I_\M \widehat{\bf y}_\M$, which vanishes
a.e. in each $K\in \T \cap \hT$ (see Remark \ref{rem:new}). This and Lemma~\ref{Morley_Interpolation}.b 
imply
\begin{align*}
 & F(\widehat{\bf y}_\M -  I_\M \widehat{\bf y}_\M) - B_{\NC}({\Psi}_\M, {\Psi}_\M, 
\widehat{\bf y}_\M- I_\M\widehat{\bf y}_\M)   \lesssim
{{\beta_1}}\, 
 \trinl  \widehat{\bf y}_\M -  I_\M \widehat{\bf y}_\M \trinr_{\NC}  \nonumber \\
   & \quad \times \big(\sum_{K\in\cT \setminus \hT} |K|^2(\|f+[{u}_{\M},{v}_{\M}]\|_{2,K}^2 + 
  \|[u_{\M},u_{\M}]\|_{2,K}^2)\big)^{1/2} 
   \le \eta(\T \setminus \hT )
\end{align*} 
with 
$\trinl \widehat{\bf y}_\M -  I_\M \widehat{\bf y}_\M \trinr_{\NC} \le
\trinl\widehat{\bf y}_\M\trinr_{\NC} \le 1/{\beta_1} $ in the last step.
These estimates control the right-hand side in \eqref{t2a}. The resulting estimate and  \eqref{t2a1}
lead to  $C_3\approx 1$ with  
\begin{align} \label{t2a1234}
\left(1- \sqrt{8}C_{\text{\rm dea}}  \beta_1^{-1} C(\gamma,\Psi) h^\gamma_{\max} \right)
\delta (\T,\widehat{\T}) \le C_3   \eta(\T \setminus \hT ).
\end{align}
The estimate \eqref{t2a1234} holds for all triangulations in $\bT(\min\{\delta_0,\delta_1\})$
and the  particular choice 
$\delta_3=\min\{\delta_0,\delta_1$,  $ (   {2 \sqrt{2} C_{\text{\rm dea}}}
C(\gamma,\Psi)/{\beta_1} )^{-1/\gamma}\} $ proves {\bf (A3)} with 
 $\Lambda_3:=4C_3^2$.
\end{proof}

The discrete reliability {\bf (A3)} implies reliability of the error estimators. 

\begin{cor}[reliability]\label{correliability}
Given the exact solution $\Psi$ and $\delta_3$ from Theorem~\ref{thma3},
the discrete solution $\Psi_{\rm M} \in V(\cT)$ for $\cT\in\bT(\delta_3) $ satisfies  
$
\trinl\Psi-\Psi_{\rm M}\trinr_{\rm pw}^2\leq \Lambda_3 \eta^2(\cT)$ .
\end{cor}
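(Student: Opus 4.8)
The plan is to derive the reliability estimate from the discrete reliability {\bf (A3)} of Theorem~\ref{thma3} by passing to the limit along a sequence of uniform refinements. Fix $\cT\in\bT(\delta_3)$ together with its (unique, local) discrete solution $\Psi_\M\in\bV(\cT)$, and let $(\widehat\cT_k)_{k\in\bN}$ be the sequence obtained from $\cT$ by $k$ successive uniform newest-vertex-bisection refinements, so that every $\widehat\cT_k\in\TT{\cT}\subseteq\bT(\delta_3)\subseteq\bT(\delta_0)$ and the maximal mesh-size satisfies $h_{\max}(\widehat\cT_k)\to0$ as $k\to\infty$. By Theorem~\ref{aprioriestimate}.a each $\widehat\cT_k$ carries a unique discrete solution $\widehat\Psi_{\M,k}\in\bV(\widehat\cT_k)$ with $\trinl\Psi-\widehat\Psi_{\M,k}\trinr_{\pw}\le\varepsilon_0$, so the $2$-level distance $\delta(\cT,\widehat\cT_k)=\trinl\widehat\Psi_{\M,k}-\Psi_\M\trinr_{\pw}$ from Subsection~\ref{axioms} is well defined, and {\bf (A3)} applies to the pair $(\cT,\widehat\cT_k)$.

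The crucial step is the a~priori convergence $\widehat\Psi_{\M,k}\to\Psi$ in the piecewise energy norm. Theorem~\ref{aprioriestimate}.b applied on $\widehat\cT_k$ yields
\[
\trinl\Psi-\widehat\Psi_{\M,k}\trinr_{\pw}\lesssim \trinl\Psi-I_\M\Psi\trinr_{\pw}+\text{\rm osc}_1(f+[u,v],\widehat\cT_k)+\text{\rm osc}_1([u,u],\widehat\cT_k),
\]
where $I_\M$ here is the Morley interpolation onto $\bV(\widehat\cT_k)$. The first term tends to zero by Lemma~\ref{Morley_Interpolation}.c, since $\Psi\in\bH^{2+\gamma}(\Omega)$ by Theorem~\ref{ap} and $h_{\max}(\widehat\cT_k)\to0$; the two oscillation terms tend to zero because $f+[u,v],[u,u]\in L^2(\Omega)$ while the weight $h^2_{\widehat\cT_k}$ converges uniformly to zero. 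A triangle inequality then gives $\lvert\delta(\cT,\widehat\cT_k)-\trinl\Psi-\Psi_\M\trinr_{\pw}\rvert\le\trinl\Psi-\widehat\Psi_{\M,k}\trinr_{\pw}\to0$, that is, $\delta(\cT,\widehat\cT_k)\to\trinl\Psi-\Psi_\M\trinr_{\pw}$ as $k\to\infty$.

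It remains to combine these facts. For every $k$, {\bf (A3)} reads $\delta^2(\cT,\widehat\cT_k)\le\Lambda_3\,\eta^2(\cT\setminus\widehat\cT_k)$, and since $\cT\setminus\widehat\cT_k\subseteq\cT$ and all summands in the sum convention \eqref{eq:def_estim} are nonnegative, $\eta^2(\cT\setminus\widehat\cT_k)\le\eta^2(\cT)$. Letting $k\to\infty$ and using the limit from the previous paragraph gives $\trinl\Psi-\Psi_\M\trinr_{\pw}^2\le\Lambda_3\,\eta^2(\cT)$, which is the asserted reliability (and provides the alternative proof announced after Theorem~\ref{reliability_NC}). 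The only nontrivial ingredient, and hence the point to handle with care, is the convergence $\widehat\Psi_{\M,k}\to\Psi$: this is not a bare density statement but relies on the existence and local uniqueness of the discrete solutions near $\Psi$ on every sufficiently fine member of the refinement sequence, which is exactly what Theorem~\ref{aprioriestimate}.a supplies for $\widehat\cT_k\in\bT(\delta_0)$; the remaining steps are routine triangle inequalities and limit passages.
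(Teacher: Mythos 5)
Your proposal is correct and follows exactly the paper's own argument: uniform refinements of $\cT$ with mesh-size tending to zero, the a~priori convergence $\trinl\Psi-\widehat\Psi_{\M,k}\trinr_{\pw}\to0$ from Theorem~\ref{aprioriestimate}, the $k$-independent bound $\delta^2(\cT,\widehat\cT_k)\le\Lambda_3\eta^2(\cT\setminus\widehat\cT_k)\le\Lambda_3\eta^2(\cT)$ from Theorem~\ref{thma3}, and a triangle inequality in the limit. You merely spell out in more detail (via Theorem~\ref{aprioriestimate}.b, Lemma~\ref{Morley_Interpolation}.c, and the vanishing oscillations) why the fine-grid solutions converge to $\Psi$, which the paper leaves implicit.
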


\begin{proof}
Given $\cT^{(0)}:=\cT  \in\bT(\delta_3)$, define a sequence of uniform refinements 
by  $\cT^{(k+1)}=   \textsc{Refine}(\cT^{(k)})$ for  any $k\in\bN$.
Let $\hT:=\cT^{(k)}$ for the parameter $k\in\bN$ and notice that the maximal mesh-size in $\hT$
tends to zero as $k\to\infty$. Hence Theorem~\ref{aprioriestimate} guarantees convergence of
$\trinl\Psi- \widehat{\boldmath \Psi}_\M \trinr_{\rm pw}\to 0 $ as  $k\to\infty$. On the other hand,
Theorem~\ref{thma3} shows 
$\trinl \Psi_\M- \widehat{\boldmath \Psi}_\M  \trinr_{\rm pw}^2\le \Lambda_3\eta^2(\cT)$.
Since the upper bound does not
depend on $k\in\bN$, this and a triangle inequality shows 
the assertion in the limit as  $k\to\infty$.
\end{proof}

\subsection{Preliminaries to the proof of quasiorthogonality}\label{subsectprelinimariesProofofquasiorthogonality(A4)}
The quasiorthogonality is always subtle  for nonconforming  schemes and 
requires a careful analysis of the quadratic nonlinear contributions.
The proof departs with two preliminary lemmas 
formulated in the (2-level) notation of {\bf (A1)}-{\bf (A3)}. Recall the notation
$\mu$ resp. $\widehat\mu$  in 
\eqref{eqnewccA1forproofofA4}-\eqref{eqnewccA2forproofofA4} for the volume contributions
of the error estimator $\eta$ resp. $\widehat\eta$ 
and adapt the sum convention \eqref{eq:def_estim} 
with $\mu^2(\T)=\sum_{K\in\cT}\mu^2(K)$ etc.
%
Recall that ${h}_\T \in {P}_0(\cT)$ is the mesh-size in $\cT$ with 
$h_{\max}:=\max h_\cT\le\delta_0$ with $\delta_0$ and $\varepsilon_0$ from 
Theorem~\ref{aprioriestimate}. Suppose $\cT\in \bT(\delta_0)$ and $\hT\in\bT(\cT)$
throughout this section. The following estimate of $\mu$  and $\widehat{\mu}$ in particular implies for any unform refinement $\widehat{\T}$ of $\T$ the reduction $\widehat{\mu}({\widehat{\T}}) \le \frac{3}{4} \mu (\T)$ plus small terms $h_{\rm max}$ times $\delta(\T, \widehat{\T})$ indicated in Subsection 1.3.
 
\begin{lem} \label{claim1} 
The bound $M:= \trinl { \Psi}  \trinr+\varepsilon_0$ satisfies 
\[
\mu^2(\cT\setminus\hT) 
 \le 4\mu^2(\cT) - 4\widehat\mu^2 (\hT) 
+8 h_{\max}M\delta(\cT,\hT)\left(
\widehat\mu (\hT)+\mu(\cT)\right) 
+  24 h^2_{\max}M^2\delta^2(\cT,\hT).
\]
\end{lem}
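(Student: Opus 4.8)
The plan is to decompose the coarse volume contribution triangle by triangle. Fix $K \in \cT \setminus \hT$; then $K$ is subdivided into finitely many triangles $T \in \hT(K) := \{T \in \hT : T \subset K\}$ with $|T| \le |K|/2$, and each $\widehat\mu(T)^2$ involves $\widehat\Psi_\M = (\widehat u_\M, \widehat v_\M)$ on $T$. The starting point is to write $\mu(K)$ as a perturbation of the sum $\bigl(\sum_{T \in \hT(K)} \widehat\mu^2(T)\bigr)^{1/2}$-type quantity built from $\Psi_\M$ instead of $\widehat\Psi_\M$, exactly mirroring the split $\widehat\eta(\hT(K)) \le S_1 + S_2$ in the proof of \textbf{(A2)}. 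Concretely, I would introduce the auxiliary quantity
\[
\sigma^2(K) := \sum_{T \in \hT(K)} |T|^2 \bigl( \|[u_\M,v_\M]+f\|_{2,T}^2 + \|[u_\M,u_\M]\|_{2,T}^2 \bigr),
\]
which, because $\sum_{T \in \hT(K)} \|\cdot\|_{2,T}^2 = \|\cdot\|_{2,K}^2$ and $|T|^2 \le \tfrac14 |K|^2$ for each sub-triangle (using $|T| \le |K|/2$), satisfies $\sigma^2(K) \le \tfrac14 \mu^2(K)$, and summed over $K \in \cT \setminus \hT$ gives $\sum_{K} \sigma^2(K) \le \tfrac14 \mu^2(\cT \setminus \hT)$. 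Equivalently, reversing the roles, $\mu^2(\cT \setminus \hT) \ge 4 \sum_K \sigma^2(K)$ is the wrong direction, so instead I use the reverse triangle inequality on $\mathbb R^m$ together with $|T|^2 \le \tfrac14 |K|^2$ to get $\widehat\mu^2(\hT(K)) \le 2\,\sigma_\cT^2(K) + 2\, E(K)$ where $\sigma_\cT^2(K)$ is the $\Psi_\M$-version summed over $\hT(K)$ and $E(K)$ collects the bracket differences $\|[\widehat u_\M,\widehat v_\M] - [u_\M,v_\M]\|_{2,T}^2$ etc.; then reorganize to isolate $\mu^2(\cT\setminus\hT)$.

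The cleaner route, which I expect to be the actual one, is a direct reverse-triangle-inequality estimate at the level of the whole mesh. Write $\mu(\cT\setminus\hT)$ as an $\ell^2$-norm over triangles $K \in \cT\setminus\hT$ of the quantities $|K|(\|[u_\M,v_\M]+f\|_{2,K}^2 + \|[u_\M,u_\M]\|_{2,K}^2)^{1/2}$. Because each such $K$ is partitioned in $\hT$ and the $L^2$ norms are additive over the partition while $|K| = \sum_{T\in\hT(K)}|T| + (\text{nothing})$... more precisely $|K|^2 \ge \bigl(\sum_{T\in\hT(K)} |T|\bigr)^2 \ge \sum_{T} |T|^2$ gives the wrong inequality, so one instead compares $|K|^2 \|[u_\M,v_\M]+f\|_{2,K}^2$ with $\sum_{T\in\hT(K)} |T|^2\|[u_\M,v_\M]+f\|_{2,T}^2$ using $|T| \le |K|/2$, obtaining $\sum_{T\in\hT(K)}|T|^2 (\cdots)_T \le \tfrac14 |K|^2(\cdots)_K$, i.e. the $\Psi_\M$-based finer-mesh volume sum over $\hT(K)$ is at most $\tfrac14 \mu^2(K)$. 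Summing over $K \in \cT\setminus\hT$ and denoting by $\widetilde\mu^2(\hT\setminus\cT)$ the finer-mesh volume estimator with $\Psi_\M$ in place of $\widehat\Psi_\M$ restricted to $T \in \hT$ with $T \not\in \cT$ (all such $T$ lie in some $K \in \cT\setminus\hT$), this yields $\widetilde\mu^2(\hT\setminus\cT) \le \tfrac14\mu^2(\cT\setminus\hT)$, hence $\mu^2(\cT\setminus\hT) \le 4\bigl(\mu^2(\cT\setminus\hT) - \widetilde\mu^2(\hT\setminus\cT)\bigr)$... which is circular. The resolution is to not extract a factor but to add and subtract: $\mu^2(\cT) = \mu^2(\cT\cap\hT) + \mu^2(\cT\setminus\hT)$ and $\widehat\mu^2(\hT) = \widehat\mu^2(\cT\cap\hT) + \widehat\mu^2(\hT\setminus\cT)$; then $\mu^2(\cT\setminus\hT) = \mu^2(\cT) - \mu^2(\cT\cap\hT)$, and I bound $\mu^2(\cT\cap\hT) \ge \tfrac12\widehat\mu^2(\cT\cap\hT) - (\text{perturbation})$ via \eqref{eqnewccA1forproofofA4} and a weighted Young inequality \eqref{eq:weighted}, while $\widehat\mu^2(\hT\setminus\cT) \le \tfrac12\mu^2(\cT\setminus\hT) + (\text{perturbation})$ via \eqref{eqnewccA2forproofofA4}.

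So the key steps, in order, are: (1) apply the reduction estimate \eqref{eqnewccA2forproofofA4} for the volume terms, square it, and use $(a+b)^2 \le (1+\epsilon)a^2 + (1+\epsilon^{-1})b^2$ with a suitable $\epsilon$ (here $\epsilon$ chosen so $\tfrac12(1+\epsilon) \le \tfrac34$ works, but we want a factor giving the constant $4$, so more carefully take the crude $(a+b)^2 \le 2a^2 + 2b^2$) to get $\widehat\mu^2(\hT\setminus\cT) \le \mu^2(\cT\setminus\hT)/2 + 2 h_{\max}^2 M^2\delta^2$... let me instead just feed the raw inequality \eqref{eqnewccA2forproofofA4} into the algebra; (2) apply the stability estimate \eqref{eqnewccA1forproofofA4} on $\cT\cap\hT$ to compare $\mu(\cT\cap\hT)$ with $\widehat\mu(\cT\cap\hT)$, square, and use \eqref{eq:weighted}; (3) combine via $\mu^2(\cT\setminus\hT) = \mu^2(\cT) - \mu^2(\cT\cap\hT)$ and $\widehat\mu^2(\hT) = \widehat\mu^2(\cT\cap\hT) + \widehat\mu^2(\hT\setminus\cT)$, and collect the cross terms $h_{\max}M\delta(\mu+\widehat\mu)$ and the quadratic terms $h_{\max}^2M^2\delta^2$ to arrive at the stated constants $4$, $8$, $24$. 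The main obstacle will be bookkeeping the constants so that the cross term appears as $8 h_{\max}M\delta(\widehat\mu(\hT) + \mu(\cT))$ with $\widehat\mu(\hT)$ and $\mu(\cT)$ (the full estimators, not the restricted pieces) and the quadratic remainder is exactly $24 h_{\max}^2 M^2\delta^2$; this is achieved by using $\widehat\mu(\cT\cap\hT) \le \widehat\mu(\hT)$, $\mu(\cT\setminus\hT) \le \mu(\cT)$ to pass from the restricted quantities in \eqref{eqnewccA1forproofofA4}-\eqref{eqnewccA2forproofofA4} to the full ones, doubling the factor $2$ into $4$ at each squaring, and absorbing via the weighted Young inequality with weight tuned to keep the leading coefficient of $\mu^2(\cT\setminus\hT)$ equal to $1$. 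The rearrangement $\mu^2(\cT\setminus\hT) + 4\widehat\mu^2(\hT\setminus\cT) \le \ldots$ combined with $4\widehat\mu^2(\hT) - 4\widehat\mu^2(\cT\cap\hT) = 4\widehat\mu^2(\hT\setminus\cT)$ then reproduces the claimed inequality after elementary algebra; details of the constant-chasing are routine and omitted.
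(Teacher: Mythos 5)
Your final plan is the paper's proof: square the volume reduction estimate \eqref{eqnewccA2forproofofA4} with the weighted Young inequality at weight $1/2$ to obtain $\widehat\mu^2(\hT\setminus\cT)\le\tfrac34\mu^2(\cT\setminus\hT)+6h_{\max}^2M^2\delta^2(\cT,\hT)$, rewrite via the additive splittings $\mu^2(\cT)=\mu^2(\cT\cap\hT)+\mu^2(\cT\setminus\hT)$ and $\widehat\mu^2(\hT)=\widehat\mu^2(\cT\cap\hT)+\widehat\mu^2(\hT\setminus\cT)$, control $\widehat\mu^2(\cT\cap\hT)-\mu^2(\cT\cap\hT)$ by \eqref{eqnewccA1forproofofA4}, multiply by $4$, and enlarge $\widehat\mu(\cT\cap\hT)\le\widehat\mu(\hT)$, $\mu(\cT\cap\hT)\le\mu(\cT)$. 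Two small cautions: the cross term $8h_{\max}M\delta(\widehat\mu+\mu)$ comes from the factorisation $a^2-b^2=(a-b)(a+b)$ applied to \eqref{eqnewccA1forproofofA4} rather than from squaring with \eqref{eq:weighted}, and the crude bound $(a+b)^2\le 2a^2+2b^2$ you briefly entertain in step (1) would annihilate the coefficient of $\mu^2(\cT\setminus\hT)$ and must be discarded, as you indeed do.
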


\begin{proof}
Recall \eqref{eqnewccA2forproofofA4} and  deduce
\[
\widehat\mu^2 (\hT\setminus\T)\le 3/4\, \mu^2(\cT\setminus\hT)
+  6 h^2_{\max}M^2\delta^2(\cT,\hT).
\]
This is equivalent to
\[
2^{-2} \mu^2(\cT\setminus\hT) +\widehat\mu^2 (\hT) -\mu^2(\cT) \le  
\widehat\mu^2 (\T\cap\hT)-\mu^2(\cT\cap\hT)
+  6 h^2_{\max}M^2\delta^2(\cT,\hT).
\]
Recall \eqref{eqnewccA1forproofofA4} and the binomial formula to derive 
\begin{align*}
\widehat\mu^2 (\T\cap\hT)-\mu^2(\cT\cap\hT) \le 
 2h_{\max}M\delta(\cT,\hT)\left(
\widehat\mu (\T\cap\hT)+\mu(\cT\cap\hT)\right). 
\end{align*}
The combination of the previous two displayed estimates concludes the proof.
\end{proof}
\noindent {Recall the definition of $\text{osc}_m(f,\T)$ from Section \ref{subsecTriangulationsanddiscretespaces} and write $\text{osc}_m(f,{\hT})$ for the oscillations with respect to the finer triangulation $\hT$.}
\begin{lem} \label{qo1} 
There exists a constant $C_{\rm qo}$ (depending on $\Psi$, $f$, the 
constants in Theorem~\ref{aprioriestimate},  and $\cT_{\text{\rm init}}$)
such that
\begin{align*}
& A_{\rm{ pw}}(\Psi-\widehat{\Psi}_{\rm M}, \Psi_{\rm M} - \widehat{\Psi}_{\rm M}) \le 
C_{\rm qo}^{1/2} {\big( \trinl \Psi- \widehat{\Psi}_{\rm M} \trinr^2_{\rm pw} 
+  {\rm osc}_m^2(f,\T) \big)^{1/2} }
 \big(   \mu(\T \setminus \hT)+ h_{\max}^{\gamma}
\delta(\cT,\hT)   \big).
\end{align*}
 \end{lem}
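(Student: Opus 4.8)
The plan is to follow the pattern of the proof of Theorem~\ref{aprioriestimate}.c: rewrite $A_{\rm pw}(\Psi-\widehat\Psi_{\rm M},\Psi_{\rm M}-\widehat\Psi_{\rm M})$ through the weak formulation~\eqref{vform_cts} of $\Psi$ and the discrete equations~\eqref{vformdNC} on $\cT$ and on $\hT$, after replacing the nonconforming test functions by their conforming companions from Lemma~\ref{hctenrich}, and to sort the resulting terms into (i) a coarse volume residual supported on the refined region $\cT\setminus\hT$ and (ii) perturbations of order $h_{\max}^\gamma$ produced by the nonconformity and the nonlinearity.

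First I would abbreviate $e:=\Psi-\widehat\Psi_{\rm M}$, $d:=\Psi_{\rm M}-\widehat\Psi_{\rm M}$ and split $d=(\Psi_{\rm M}-\Psi_{\rm M}^\star)+(\Psi_{\rm M}^\star-\widehat\Psi_{\rm M})$ with $\Psi_{\rm M}^\star:=\widehat I_{\rm M}E_{\rm M}\Psi_{\rm M}\in\bV(\hT)$, where (as in the proof of Theorem~\ref{thma3}, cf.\ Remark~\ref{rem:new}) $\Psi_{\rm M}^\star=\Psi_{\rm M}$ on every $K\in\cT\cap\hT$, so that $\Psi_{\rm M}-\Psi_{\rm M}^\star$ vanishes on $\cT\cap\hT$ and, by Lemma~\ref{Morley_Interpolation}.a and Lemma~\ref{hctenrich}.c, satisfies $\Pi_0 D_{\rm pw}^2(\Psi_{\rm M}-\Psi_{\rm M}^\star)=0$. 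For the part $\Psi_{\rm M}^\star-\widehat\Psi_{\rm M}\in\bV(\hT)$ I would pass to conforming companions and $\widehat I_{\rm M}$--interpolants, discard the interpolation--orthogonality contributions via Lemma~\ref{Morley_Interpolation}.a, and then trade $A_{\rm pw}(\Psi,\cdot)$ for $F(\cdot)-B_{\rm pw}(\Psi,\Psi,\cdot)$ by~\eqref{vform_cts} and $A_{\rm pw}(\widehat\Psi_{\rm M},\cdot)$ for $F(\cdot)-B_{\rm pw}(\widehat\Psi_{\rm M},\widehat\Psi_{\rm M},\cdot)$ by~\eqref{vformdNC} on $\hT$. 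What survives from the ``$A$--parts'' is the coarse residual $F(\cdot)-A_{\rm pw}(\Psi_{\rm M},\cdot)-B_{\rm pw}(\Psi_{\rm M},\Psi_{\rm M},\cdot)$ tested against a function with vanishing piecewise integral mean and vanishing piecewise mean of the Hessian; since $D_{\rm pw}^2\Psi_{\rm M}$ and $[u_{\rm M},v_{\rm M}]_{\rm pw}$ are piecewise constant, this leaves only $\sum_{K\in\cT\setminus\hT}\int_K(f+[u_{\rm M},v_{\rm M}])(\cdot)\dx$, bounded by $\mu(\cT\setminus\hT)$ times $\|h_\cT^{-2}(\cdot)\|$, plus an oscillation term treated with the higher-order $\Pi_m$--companion of Remark~\ref{remexctensions} to produce $(f-\Pi_m f)$ and hence ${\rm osc}_m(f,\cT)$.

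The nonlinear remainder is handled next: I would use the symmetry of $B_{\rm pw}$ in its first two arguments to write $B_{\rm pw}(\Psi,\Psi,\cdot)-B_{\rm pw}(\widehat\Psi_{\rm M},\widehat\Psi_{\rm M},\cdot)=B_{\rm pw}(e,\Psi+\widehat\Psi_{\rm M},\cdot)$ and treat the $\Psi_{\rm M}$--contribution analogously, then separate the genuinely quadratic piece $B_{\rm pw}(e,e,\cdot)$ (controlled by Lemma~\ref{b_dG}.a, carrying the full $\trinl e\trinr_{\rm pw}^2$) from the pieces with the smooth factor $\Psi$ (controlled by Lemma~\ref{b_dG}.b together with the a~priori bounds $\trinl\Psi-\Psi_{\rm M}\trinr_{\rm pw},\trinl e\trinr_{\rm pw}\lesssim h_{\max}^\gamma$ and $\|\Psi\|_{H^{2+\gamma}(\Omega)}\lesssim 1$ from Theorem~\ref{aprioriestimate} and Theorem~\ref{ap}, which is what generates the factor $h_{\max}^\gamma$). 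The remaining interpolation and companion defects of $\Psi$, $\Psi_{\rm M}$, $\widehat\Psi_{\rm M}$ are estimated by Lemma~\ref{Morley_Interpolation}.b--c, Lemma~\ref{hctenrich}.d--e, \eqref{magic1}, and the medius estimate~\eqref{t32}, each contributing either $h_{\max}^\gamma\,\delta(\cT,\hT)$ or, when supported on $\cT\setminus\hT$, a term subsumed by $\mu(\cT\setminus\hT)$; a final weighted Young inequality~\eqref{eq:weighted} together with $\delta(\cT,\hT)=\trinl\Psi_{\rm M}-\widehat\Psi_{\rm M}\trinr_{\rm pw}$ and $\|h_\cT^{-2}(\cdot)\|\lesssim\trinl e\trinr_{\rm pw}+{\rm osc}_m(f,\cT)$ collects everything into the asserted bound.

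The step I expect to be the main obstacle is precisely the bookkeeping in the last two paragraphs: one must arrange the nonconformity and nonlinearity perturbations so that only the \emph{volume} part $\mu(\cT\setminus\hT)$ of the estimator appears (the jump contributions must cancel through the Morley--interpolation orthogonality and the medius companion, rather than be Cauchy--Schwarzed into the full $\eta(\cT\setminus\hT)$), and so that $\delta(\cT,\hT)$ is spent only in the second factor and never simultaneously against $\trinl e\trinr_{\rm pw}$, which would make the bound too weak for the summation in~\textbf{(A4)}. Exploiting that $B_{\rm pw}$ is linear in its last slot, so that $B_{\rm pw}(e,e,\cdot)$ carries the full $\trinl e\trinr_{\rm pw}^2$ while every other nonlinear and nonconforming piece is of lower order in $h_{\max}$, is the mechanism that makes the product structure of the claimed estimate come out.
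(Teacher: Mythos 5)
Your overall strategy (use the discrete equations, localize the volume residual to $\cT\setminus\hT$, treat the nonlinearity with the trilinear-form bounds and a~priori estimates) points in the right direction, but the two steps you yourself flag as the ``main obstacle'' are exactly where your plan diverges from what actually works, and they are genuine gaps rather than bookkeeping. First, the paper never introduces $\Psi_{\rm M}^\star=\widehat I_{\rm M}E_{\rm M}\Psi_{\rm M}$, never invokes the continuous equation \eqref{vform_cts}, and never uses \eqref{t32} here. Instead it writes $A_{\rm pw}(\Psi-\widehat\Psi_{\rm M},\Psi_{\rm M}-\widehat\Psi_{\rm M})=A_{\rm pw}(\Psi_{\rm M},\Psi-\widehat\Psi_{\rm M})-A_{\rm pw}(\widehat\Psi_{\rm M},\Psi-\widehat\Psi_{\rm M})$ and replaces the \emph{test function} $\Psi-\widehat\Psi_{\rm M}$ by $I_{\rm M}(\Psi-\widehat\Psi_{\rm M})$ resp.\ $\widehat I_{\rm M}(\Psi-\widehat\Psi_{\rm M})$ \emph{exactly} (Lemma~\ref{Morley_Interpolation}.a plus the piecewise constancy of the discrete Hessians), so that only the two discrete equations \eqref{vformdNC} enter. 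The resulting test function $(I_{\rm M}-\widehat I_{\rm M})(\Psi-\widehat\Psi_{\rm M})$ vanishes on $\cT\cap\hT$ by Remark~\ref{rem:new}, which produces $\mu(\cT\setminus\hT)\,\trinl\Psi-\widehat\Psi_{\rm M}\trinr_{\rm pw}$ by a plain Cauchy inequality and Lemma~\ref{Morley_Interpolation}.b; no integration by parts is performed, so no jump terms ever arise and there is nothing to ``cancel''. Your route through companions and \eqref{t32} would unavoidably bring in the full $\eta(\cT\setminus\hT)$, and you offer no mechanism to strip off its jump part.

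Second, and more importantly, you do not identify the ingredient that makes the nonlinear term work. After the symmetrization, the critical piece is $2B_{\rm pw}(\Psi,\widehat\Psi_{\rm M}-\Psi_{\rm M},\widehat\Phi_{\rm M})$ with $\widehat\Phi_{\rm M}=\widehat I_{\rm M}(\Psi-\widehat\Psi_{\rm M})$: here \emph{neither} of the first two arguments is $O(h_{\max}^\gamma)$, so the energy-norm a~priori bounds you invoke cannot supply the factor $h_{\max}^\gamma$ multiplying $\delta(\cT,\hT)$. The paper gets it from Lemma~\ref{b_dG}.b, which trades the last slot for the piecewise $H^1$ seminorm, combined with the \emph{novel} piecewise $H^1$ a~priori estimate of Theorem~\ref{aprioriestimate}.c (on $\hT$), giving $|\widehat\Phi_{\rm M}|_{1,2,\pw}\lesssim h_{\max}^\gamma(\trinl\Psi-\widehat\Psi_{\rm M}\trinr_{\pw}+{\rm osc}_m(f,\hT))$; this is also the sole source of the oscillation term in the first factor of the asserted bound, not a $\Pi_m$-companion acting on the volume residual as you suggest (the volume residual is kept wholesale as $\mu(\cT\setminus\hT)$, $f$ included). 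Without Theorem~\ref{aprioriestimate}.c your estimate degenerates to $\trinl\Psi-\widehat\Psi_{\rm M}\trinr_{\pw}\,\delta(\cT,\hT)$ without the gain $h_{\max}^\gamma$, which is too weak for the summation in \textbf{(A4)}. Finally, note that no Young inequality belongs in this lemma: the asserted estimate is a genuine product bound, and the Young inequalities are spent only later in Theorem~\ref{a4e}.
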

 
\begin{proof}
Recall  $D^2_\NC I_{\rm M}=\Pi_0D^2_\NC$ 
from Lemma \ref{Morley_Interpolation}.a,  set
$\widehat{\Phi}_{\text{\rm M}}:= 
\widehat{I}_{\text{\rm M}}(\Psi-\widehat{\Psi}_{\text{\rm M}})$,
and evaluate the discrete equations on the coarse (resp. fine) 
level to derive 
\begin{align*} 
& A_{\NC}(\Psi-\widehat{\Psi}_{{\text{\rm M}}}, \Psi_{{\text{\rm M}}} 
- \widehat{\Psi}_{{\text{\rm M}}}) 
= A_{\NC} (\Psi_{\text{\rm M}}, \Psi-\widehat{\Psi}_{{\text{\rm M}}})  
-A_{\NC}( \widehat{\Psi}_{{\text{\rm M}}},\Psi-\widehat{\Psi}_{{\text{\rm M}}})  \nonumber \\
&\quad
= A_{\NC}(\Psi_{{\text{\rm M}}}, I_{{\text{\rm M}}}(\Psi-\widehat{\Psi}_{{\text{\rm M}}})) 
- A_{\NC}(\widehat{\Psi}_{{\text{\rm M}}}, \widehat{\Phi}_{\text{\rm M}}) \nonumber \\
&  \quad =  F((I_{{\text{\rm M}}}-\widehat{I}_{{\text{\rm M}}})(\Psi-\widehat{\Psi}_{\text{\rm M}})  )- 
 B_{\NC}(\Psi_{{\text{\rm M}}},\Psi_{\text{\rm M}}, ({I}_{{\text{\rm M}}}-\widehat{I}_{\text{\rm M}})(\Psi-\widehat{\Psi}_{\text{\rm M}})) \nonumber \\
&  \quad \quad + 
B_{\NC}(\widehat{\Psi}_{\text{\rm M}}, \widehat{\Psi}_{\text{\rm M}}, 
\widehat{\Phi}_{\text{\rm M}})
- B_{\NC}(\Psi_{{\text{\rm M}}},\Psi_{\text{\rm M}}, 
\widehat{\Phi}_{\text{\rm M}}) =:S_3+S_4.
\end{align*}
The definitions of $F_{\NC}(\bullet)$, $B_{\NC}(\bullet,\bullet,\bullet)$, the Cauchy
 inequality,  and  
 $(1-I_{\text{\rm M}})\widehat{\Phi}_{\text{\rm M}}=0$ a.e. in  $K \in T \cap \hT$ 
 prove  (with the vector 
 $(f +  [u_{\text{\rm M}}, v_{\text{\rm M}}], -\frac{1}{2} [u_{\text{\rm M}},u_{\text{\rm M}}])\in L^2(\Omega;\bR^2)$
  and the  scalar product $\cdot$ in $\bR^2$) that
\begin{align*} S_3&:=
F((I_{{\text{\rm M}}}-\widehat{I}_{{\text{\rm M}}})(\Psi-\widehat{\Psi}_{\text{\rm M}})  ) -
 B_{\NC}(\Psi_{{\text{\rm M}}},\Psi_{\text{\rm M}}, ({I}_{{\text{\rm M}}}- \widehat{I}_{\text{\rm M}})(\Psi-\widehat{\Psi}_{\text{\rm M}})) \nonumber \\
 &= \sum_{K \in \T\setminus\hT} \int_K 
 (f +  [u_{\text{\rm M}}, v_{\text{\rm M}}], -\frac{1}{2} [u_{\text{\rm M}},u_{\text{\rm M}}])\cdot (I_{{\text{\rm M}}}-\widehat{I}_{{\text{\rm M}}})(\Psi-\widehat{\Psi}_{\text{\rm M}}) \; \dx  \\
&\le  \mu(\T \setminus \hT)
 \big(\sum_{K \in \T \setminus \hT} h_K^{-4} 
  \| (\widehat{I}_{{\text{\rm M}}}-I_{{\text{\rm M}}}) (\Psi- \widehat{\Psi}_{{\text{\rm M}}})\|^2_{2,K} \big)^{1/2}  \lesssim \mu(\T \setminus \hT)
   \trinl \Psi - \widehat{\Psi}_{{\text{\rm M}}} \trinr_{\NC}.
\end{align*}
{The last estimate follows from rewriting $\widehat{I}_{{\text{\rm M}}}-I_{\text{\rm M}} =({\widehat I}_{\rm M} -1)+ (1-I_{\rm M}) $, a triangle inequality  to separate out the terms $(\widehat{I}_\M-1)(\Psi-\widehat{\Psi}_\M)$ and $(1-{I_\M})(\Psi-\widehat{\Psi}_\M)$ and then Lemma ~\ref{Morley_Interpolation}.b and the Pythogoras theorem (resp. Lemma ~\ref{Morley_Interpolation}.b)  establish the stability of the resulting first  (resp. second) term}.
The triangle inequality and   Lemma~\ref{b_dG}.a-b (with $\hT$ replacing $\cT$) show
\begin{align*} 
S_4&:= 
B_{\NC}(\widehat{\Psi}_{\text{\rm M}}, 
\widehat{\Psi}_{\text{\rm M}}, \widehat{\Phi}_{\text{\rm M}})- B_{\NC}(\Psi_{{\text{\rm M}}},\Psi_{\text{\rm M}}, 
\widehat{\Phi}_{\text{\rm M}})  
=B_{\NC}(\widehat{\Psi}_{\text{\rm M}}-\Psi_{\text{\rm M}}, \widehat{\Psi}_{\text{\rm M}} + \Psi_{\text{\rm M}}, \widehat{\Phi}_{\text{\rm M}}) \nonumber \\
&=  B_{\NC}(\widehat{\Psi}_{\text{\rm M}}+\Psi_{\text{\rm M}}- 2 \Psi,
\widehat{\Psi}_{\text{\rm M}}-\Psi_{\text{\rm M}}, \widehat{\Phi}_{\text{\rm M}})
+ 2 B_{\NC} (\Psi,\widehat{\Psi}_{\text{\rm M}}-\Psi_{\text{\rm M}}, \widehat{\Phi}_{\text{\rm M}}) \nonumber \\
&\lesssim  
\trinl \widehat{\Psi}_{\text{\rm M}}-\Psi_{\text{\rm M}} \trinr_{\NC}  \bigl(
(  \trinl \Psi-\widehat{\Psi}_{\text{\rm M}} \trinr_{\NC} + \trinl \Psi-{\Psi}_{\text{\rm M}} \trinr_{\NC} ) 
\trinl {\widehat{\Phi}}_{\text{\rm M}} \trinr_{\NC}  
+ \| \Psi\|_{{H}^{2+\gamma}(\Omega)} \:
 | \widehat{\Phi}_{\text{\rm M}}   |_{1,2,\pw}  \bigr).  
\end{align*}
{The a~priori error estimate  from Theorem~\ref{aprioriestimate} in terms of the maximal mesh-size $h_{\max}$ yields $\trinl \Psi - \widehat{\Psi}_{\text{\rm M}} \trinr_{\NC}$, $\trinl \Psi -{\Psi}_{\text{\rm M}} \trinr_{\NC} \le C(\gamma, f, \Psi) h^\gamma_{\max}$.} Lemma~\ref{Morley_Interpolation}.a implies  
$ \trinl {\widehat{\Phi}}_{\text{\rm M}} \trinr_{\NC} \le 
 \trinl \Psi-\widehat{\Psi}_{\text{\rm M}} \trinr_{\NC} $. The 
 triangle inequality, Lemma~\ref{Morley_Interpolation}.a, b, 
and Theorem~\ref{aprioriestimate}.c (with respect to $\hT$ rather than $\cT$)
result in
\begin{align*}
| \widehat{\Phi}_{\text{\rm M}}  |_{1,2,\pw}  
&= | \widehat{I}_{\text{\rm M}}\Psi-\widehat{\Psi}_{\text{\rm M}} |_{1,2,\pw} 
\le | {\boldmath \Psi}-\widehat{I}_{\text{\rm M}}\Psi |_{1,2,\pw} 
+ | \Psi-\widehat{\Psi}_{\text{\rm M}} |_{1,2,\pw} \nonumber \\
 &\lesssim   h_{\max}  \trinl {\Psi} - \widehat{I}_{\text{\rm M}}\Psi\trinr_{\pw} 
 + h_{\max}^{\gamma}  \big( \trinl \Psi-\widehat{\Psi}_{\text{\rm M}} \trinr_{\pw} 
 + {\rm osc}_m(f,\hT) \big).
 \end{align*}
%
{This, $1/2 < \gamma \le 1$, and $\trinl \Psi- \widehat{I}_{\text{\rm M}} \Psi \trinr_{\pw} \le 
\trinl \Psi- \widehat{\Psi}_{\M} \trinr_{\pw}$ from Lemma \ref{Morley_Interpolation}.a prove 
\[ S_4 \lesssim  h_{\rm max}^{\gamma}  \big( \trinl \Psi-\widehat{\Psi}_{\text{\rm M}} \trinr^2_{\pw} 
 + {\rm osc}^2_m(f,\hT) \big)^{1/2} \delta(\cT,\hT). \]
 The combination of the established inequalities for $S_3+S_4$ conclude the proof. 
 } 
\end{proof}
   
\subsection{Proof of quasiorthogonality (A4)}\label{subsectProofofquasiorthogonality(A4)}
The proof of ${\bf (A4)}$ departs with a  perturbed form ${\bf (A4)}_{\varepsilon}$ and then employs 
general arguments from the axioms of adaptivity to deduce ${\bf (A4)}$. 
Throughout this section,  let $\cT_k$, $\Psi_k$,  and $\eta_k$ denote the output of AMFEM and abbreviate
$\delta_{k,k+1}:=\delta(\cT_{k},\cT_{k+1})$ for all $k\in\bN_0$ and
\\[1mm]
 \noindent ${\bf (A4)}_{\varepsilon}$ Quasiorthogonality with $\varepsilon >0$.  
 There exists $  0 < \Lambda_{4(\varepsilon)} < \infty $ such that
 \begin{align}\label{quasi}
  \sum_{k=\ell}^{\ell+m} \delta^2_{k,k+1} \le \Lambda_{4(\varepsilon)}  \eta_{\ell}^2 
  + \varepsilon\sum_{k=\ell}^{\ell+m} \eta_k^2
  \quad\text{holds for all }\ell, m \in {\mathbb N}_0.
\end{align}
  
\begin{thm}[Quasiorthogonality]\label{a4e}  For any  {$\varepsilon >0$}  
there exist positive $\delta\le\delta_3$ and $\Lambda_{4(\varepsilon)}$ 
such that  $\cT_0 \in \bT(\delta)$ implies  \eqref{quasi}.
\end{thm}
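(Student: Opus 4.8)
The plan is to convert the quadratic identity for $\delta_{k,k+1}^2$ into a telescoping sum whose defect terms are exactly those controlled by Lemma~\ref{qo1} (the nonconforming and nonlinear cross term) and by Lemma~\ref{claim1} (the reduction of the non-higher-order volume residual), and then to absorb the surviving contributions by weighted Young inequalities with the small mesh-size $\delta$ supplying the decisive smallness factor. Abbreviate $e_k:=\trinl\Psi-\Psi_k\trinr_{\rm pw}$ and $\mu_k:=\mu(\cT_k)$, and recall $\delta_{k,k+1}=\trinl\Psi_{k+1}-\Psi_k\trinr_{\rm pw}$. Expanding $\trinl\Psi_{k+1}-\Psi_k\trinr_{\rm pw}^2$ via $\Psi_{k+1}-\Psi_k=(\Psi-\Psi_k)-(\Psi-\Psi_{k+1})$ and using symmetry and bilinearity of $A_{\rm pw}$ gives the purely algebraic identity
\[
\delta_{k,k+1}^2 \;=\; e_k^2 - e_{k+1}^2 + 2\,A_{\rm pw}(\Psi-\Psi_{k+1},\Psi_k-\Psi_{k+1}), \qquad k\in\bN_0,
\]
in which the discrete equations have not yet entered. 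Summation over $k=\ell,\dots,\ell+m$ telescopes the first two terms; dropping $-e_{\ell+m+1}^2\le 0$ and inserting the reliability $e_\ell^2\le\Lambda_3\eta_\ell^2$ of Corollary~\ref{correliability} (legitimate since $\cT_\ell\in\bT(\delta)\subseteq\bT(\delta_3)$) yields
\[
\sum_{k=\ell}^{\ell+m}\delta_{k,k+1}^2 \;\le\; \Lambda_3\eta_\ell^2 \;+\; 2\sum_{k=\ell}^{\ell+m}A_{\rm pw}(\Psi-\Psi_{k+1},\Psi_k-\Psi_{k+1}).
\]

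For the residual sum I would invoke Lemma~\ref{qo1} with $\cT=\cT_k$, $\hT=\cT_{k+1}$ (so $\Psi_{\rm M}=\Psi_k$, $\widehat\Psi_{\rm M}=\Psi_{k+1}$) and use $h_{\max}(\cT_k)\le h_{\max}(\cT_0)\le\delta$, bounding the $k$-th summand by $C_{\rm qo}^{1/2}\,(e_{k+1}^2+{\rm osc}_m^2(f,\cT_k))^{1/2}\,(\mu(\cT_k\setminus\cT_{k+1})+\delta^{\gamma}\delta_{k,k+1})$. Two reductions then make this tractable. First, since the second derivatives of the quadratics $u_{\rm M},v_{\rm M}$ are piecewise constant, $[u_{\rm M},v_{\rm M}]\in P_0(\cT_k)$, hence $f-\Pi_0 f=(f+[u_{\rm M},v_{\rm M}])-\Pi_0(f+[u_{\rm M},v_{\rm M}])$ and therefore ${\rm osc}_m^2(f,\cT_k)\le{\rm osc}_0^2(f,\cT_k)\le\eta_k^2$; with $e_{k+1}^2\le\Lambda_3\eta_{k+1}^2$ this controls the first factor by $\eta_k$ and $\eta_{k+1}$. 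Second — the decisive use of $\mu$ being \emph{not} of higher order — Lemma~\ref{claim1} with $\cT=\cT_k$, $\hT=\cT_{k+1}$ telescopes (the pair $4\mu_k^2-4\mu_{k+1}^2$ collapses to $4\mu_\ell^2-4\mu_{\ell+m+1}^2\le 4\eta_\ell^2$) into
\[
\sum_{k=\ell}^{\ell+m}\mu^2(\cT_k\setminus\cT_{k+1}) \;\le\; 4\eta_\ell^2 + 8\delta M\sum_{k=\ell}^{\ell+m}\delta_{k,k+1}(\mu_{k+1}+\mu_k)+24\delta^2M^2\sum_{k=\ell}^{\ell+m}\delta_{k,k+1}^2,
\]
and \eqref{eq:weighted} turns the mixed term into $O(\delta)$ times $\sum_k\delta_{k,k+1}^2$ plus $O(\delta)$ times $\sum_k\mu_k^2\le\sum_k\eta_k^2$. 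The passage from $\sum_k\eta_{k+1}^2$ (and $\sum_k\mu_{k+1}^2$) back to $\sum_{k=\ell}^{\ell+m}\eta_k^2$ is handled by the estimator quasi-monotonicity $\eta_{k+1}^2\le 4\eta_k^2+2(\Lstab+\Lred)^2\delta_{k,k+1}^2$, itself a consequence of the already established \textbf{(A1)}--\textbf{(A2)} through $\widehat\eta(\hT)\le\sqrt2\,\eta(\cT)+(\Lstab+\Lred)\delta(\cT,\hT)$.

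Combining these with a Cauchy--Schwarz in the index $k$ on the residual sum, one reaches an estimate of the shape
\[
\sum_{k=\ell}^{\ell+m}\delta_{k,k+1}^2 \;\le\; \Lambda_3\eta_\ell^2 + C\Bigl(\sum_{k=\ell}^{\ell+m}\eta_k^2+\sum_{k=\ell}^{\ell+m}\delta_{k,k+1}^2\Bigr)^{1/2}\Bigl(\eta_\ell^2+\delta\,\bigl(\textstyle\sum_{k=\ell}^{\ell+m}\eta_k^2+\sum_{k=\ell}^{\ell+m}\delta_{k,k+1}^2\bigr)\Bigr)^{1/2}
\]
with a universal constant $C$, after crudely merging the powers $\delta^{\gamma},\delta^{2\gamma},\delta,\delta^2$ (all bounded by a fixed power of $\delta$ as $\delta<1$, $\gamma>1/2$). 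A weighted Young inequality with a free parameter $\lambda>0$ now splits the right-hand product into $\tfrac{1}{2\lambda}(\sum_k\eta_k^2+\sum_k\delta_{k,k+1}^2)$, plus $\tfrac{\lambda}{2}\eta_\ell^2$, plus $O(\delta)(\sum_k\eta_k^2+\sum_k\delta_{k,k+1}^2)$. Given $\varepsilon>0$, choose first $\lambda$ so large that $\tfrac{C}{2\lambda}$ (with the accumulated constants) lies below $\min\{1/4,\varepsilon/4\}$, and then $\delta\le\delta_3$ so small that the $O(\delta)$-coefficient also lies below $\min\{1/4,\varepsilon/4\}$; since $m$ is finite the terms carrying $\sum_{k=\ell}^{\ell+m}\delta_{k,k+1}^2$ on the right (total coefficient $<1/2$) may be absorbed into the left-hand side, leaving \eqref{quasi} with $\Lambda_{4(\varepsilon)}$ a multiple of $\Lambda_3+\lambda$, hence depending on $\varepsilon$.

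I expect the main obstacle to be the bookkeeping of these nested weighted Young inequalities together with the strict order of the quantifiers ($\varepsilon$ first, then $\lambda$, then $\delta$), so that the coefficient in front of $\sum_{k=\ell}^{\ell+m}\eta_k^2$ genuinely ends up $\le\varepsilon$ while the coefficient in front of $\sum_{k=\ell}^{\ell+m}\delta_{k,k+1}^2$ stays strictly below $1$; conceptually, the one nonstandard point — already isolated in Lemmas~\ref{claim1} and~\ref{qo1} — is that the volume residual $\mu$ cannot be treated as a higher-order perturbation and must be controlled through its own reduction property, the extra smallness being bought by $h_{\max}\le\delta$.
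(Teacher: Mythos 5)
Your proposal is correct and follows essentially the same route as the paper: the same algebraic identity for $\delta_{k,k+1}^2$, the same two key lemmas (Lemma~\ref{qo1} for the nonconforming/nonlinear cross term and Lemma~\ref{claim1} for the non-higher-order volume residual), the same telescoping structures, and weighted Young inequalities with the decisive smallness supplied by $h_{\max}\le\delta$, ending with reliability and $\mu_k\le\eta_k$. The only (harmless) difference is organisational: you sum first and absorb $\sum_k\delta_{k,k+1}^2$ globally via one Cauchy--Schwarz in $k$ and the estimator quasi-monotonicity derived from \textbf{(A1)}--\textbf{(A2)}, whereas the paper performs the weighted Young inequalities termwise with explicit weights before summing and keeps the terminal term $e_{\ell+m+1}^2$ on the left-hand side instead of invoking quasi-monotonicity.
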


{
\begin{proof} 
Recall $\Lambda_3$ from the established inequality ${\bf (A3)}$ and set $\widehat{\Lambda}_3:=1+ \Lambda_3$.  Given any positive $\varepsilon$ we may and will assume  without loss of generality that
\begin{equation}\label{eqccnewlastversionforvarepsilon}
0<\varepsilon\le \min \left\{1,\varepsilon_0,  2^{5}  C_{\rm qo}^{1/2} {\widehat \Lambda}_3 /(2+\widehat{\Lambda}_3), 
2^3 \widehat{\Lambda}_3 \right\}.
\end{equation}
Select the maximal  positive  $\delta$ with $\delta\le\min\{\delta_0, \delta_1\}$ and 
\begin{equation}\label{eqccnewlastversionfordelta}
\max\left\{  2^{19/3}  ( C_{\rm qo} \widehat{\Lambda}_3 M\delta)^{2/3}, 
2^6 C_{\rm qo} \widehat{\Lambda}_3  \delta^{2\gamma}, 
3\times 2^9  C_{\rm qo} \widehat{\Lambda}_3 M^2 \delta^2\right\}\le \varepsilon
\end{equation}
 and suppose  $\cT_0\in\bT(\delta)$ so that the maximal mesh-size in any triangulation $\T_k$  is bounded by $h_0\le\delta$. 
Throughout the proof, abbreviate
$e_{k}:=\trinl \Psi- \Psi_{k}\trinr_{\NC} $, $\widehat{e}_k:= \big( e_k^2 + {\rm osc}_0^2(f,\T_k) \big)^{1/2}$, $\delta_{k,k+1}:=\trinl \Psi_{k +1} - \Psi_{k}\trinr_{\NC}$,   and $\mu_k:=\mu(\cT_k)$  with 
 $\mu_k^2(K):=|K|^2(\|f + [u_k,v_k] \|^2_{2,K} + \| [u_k,u_k] \|^2_{2,K}) $ for all $K\in\cT_k$, $k\in\bN_0$.    
Elementary algebra and Lemma~\ref{qo1}  (with $\cT_k$ and   $\cT_{k+1}$ replacing $\cT$ and $\hT$) result in 
\begin{equation}\label{eqnewfinalccproofa4epsiloin1}
 \delta_{k,k+1}^2 + e_{k+1}^2 - e_{k}^2 
 =2  A_{\NC} (\Psi-\Psi_{k+1}, \Psi_{k}- \Psi_{k+1})  \le 2  C_{\rm qo} ^{1/2} \widehat{e}_{k+1}
  \big(  \mu_k(  \cT_k\setminus \cT_{k+1})+  h_0 ^{\gamma} \delta_{k,k+1} \big).
\end{equation}
Two weighted  Young's inequalities for 
$a=2 \widehat{e}_{k+1}$, $b= C_{\rm qo} ^{1/2} \mu_k( \cT_k\setminus \cT_{k+1})$,  $\epsilon=2^{5}  \varepsilon^{-1}  \widehat{\Lambda}_3$ resp.
$a=\delta_{k,k+1}$, $b=  2  C_{\rm qo} ^{1/2}  h_0 ^{\gamma} \widehat{e}_{k+1}$,  $\epsilon=2$ in    \eqref{eq:weighted} 
show that  the right-hand side of \eqref{eqnewfinalccproofa4epsiloin1} is bounded from above by
\[
\delta^2_{k,k+1}/ {4} +{ \varepsilon} 2^{-3}  \widehat{\Lambda}_3^{-1} \widehat{e}^2_{k+1}+ 2^4 \varepsilon^{-1} C_{\rm qo} \widehat{\Lambda}_3 \mu^2_k(  \cT_k\setminus \cT_{k+1})
\]
with  $4 C_{\rm qo}  h_0 ^{2\gamma}\le 2^{-4}   \varepsilon \widehat{\Lambda}_3^{-1}$ from \eqref{eqccnewlastversionfordelta}. 
This leads in  \eqref{eqnewfinalccproofa4epsiloin1} to
\begin{align} \label{quasi2}
\frac{3}4 \delta_{k,k+1}^2 + 
e_{k+1}^2 - e_{k}^2 \le 2^{-3}  {\varepsilon} \widehat{\Lambda}_3^{-1}  \widehat{e}_{k+1}^2 +2^4 {\varepsilon}^{-1} C_{\rm qo} \widehat{\Lambda}_3 \mu_k^2(  \cT_k\setminus \cT_{k+1}).
\end{align}
Lemma~\ref{claim1} (with $\cT_k$ and $\cT_{k+1}$ replacing $\cT$ and $\hT$)  reads
\begin{eqnarray*} 
\mu_k^2(  \cT_k\setminus \cT_{k+1}) +4(\mu^2_{k+1}-\mu^2_k)  &\le &
8\,  M  h_{0} \delta_{k,k+1}(\mu_{k+1}+\mu_k) +  24 \, M^2 h^2_{0}\delta^2_{k,k+1}\\
&\le&   2^{-8} \varepsilon^2  C_{\rm qo}^{-1} \widehat{\Lambda}_3^{-1}  ( \mu_{k+1}^2+\mu_k^2) +  (24+ 2^{13} \varepsilon ^{-2}C_{\rm qo} \widehat{\Lambda}_3) M^2 h^2_{0}\delta^2_{k,k+1}
\end{eqnarray*} 
with a   weighted  Young's inequality for ${a =  8( \mu_{k+1}+\mu_k)}$, $b= M h_{0}\delta_{k,k+1}$, 
$\epsilon= 2^{14}  { \varepsilon^{-2}} C_{\rm qo} \widehat{\Lambda}_3 $  in \eqref{eq:weighted} and a  Cauchy inequality in the last step. 
In the substitution of this estimate in  \eqref{quasi2}, the last term  with $\delta^2_{k,k+1}$ is pre-multiplied by 
$ 2^4 \varepsilon^{-1} C_{\rm qo} \widehat{\Lambda}_3$ and then reads 
\[
 \varepsilon^{-1} C_{\rm qo} \widehat{\Lambda}_3 \left( 
 2^{17}  \varepsilon ^{-2}C_{\rm qo} \widehat{\Lambda}_3+ 3\times 2^{7} 
 \right) M^2 h^2_{0}\delta^2_{k,k+1}\le \delta^2_{k,k+1}/2
\]
from the first and last estimate in \eqref{eqccnewlastversionfordelta}. Therefore and after a multiplication by $4$,  the substitution proves
\begin{align*} 
\delta_{k,k+1}^2
+  4 (e_{k+1}^2 -  e_{k}^2)    \le     \varepsilon    \widehat{e}_{k+1}^2/(2 \widehat{\Lambda}_3) +
 2^{8}  {\varepsilon}^{-1}  C_{\rm qo} \widehat{\Lambda}_3 (\mu^2_k - \mu^2_{k+1})+
 \varepsilon (\mu_{k+1}^2 + \mu_{k}^2)/4.
\end{align*}
The  (partly telescoping) sum over all $k=\ell,\ell+1,\dots,\ell+m$ of this estimate leads to the term
$e_{\ell+m+1}^2 - e_{\ell}^2 $ on the left-hand and  to $\mu_{\ell+m+1}^2 - \mu_{\ell}^2 $ on the
right-hand side in 
\begin{align*} 
& \sum_{k=\ell}^{\ell+m} \delta_{k,k+1}^2 + 4  e_{\ell+m+1}^2-  \varepsilon  \widehat{e}_{\ell+m+1}^2 /(2  \widehat{\Lambda}_3)  
    \le  4 e_\ell^2+    \varepsilon  /( 2 \widehat{\Lambda}_3)\,    \sum_{k=\ell+1}^{\ell+m} \widehat{e}_{k}^2  \notag \\
& \quad \quad     +
 2^{8}{\varepsilon}^{-1}   C_{\rm qo} \widehat{\Lambda}_3 (\mu^2_\ell - \mu^2_{\ell+m+1})+
 \varepsilon /4\,  \sum_{k=\ell}^{\ell+m}   (\mu_{k+1}^2 + \mu_{k}^2).
\end{align*}
Since $\widehat{e}_{\ell+m+1}^2  ={e}_{\ell+m+1}^2 + {\rm osc}_0^2(f, \T_{\ell+m+1}) $, the term $4(1- \varepsilon 2^{-3} \widehat{\Lambda}_3^{-1}) {e}_{\ell+m+1}^2 \ge 0 $
arises  in the lower bound and  is non-negative because of  \eqref{eqccnewlastversionforvarepsilon}.
Recall $[\widehat{u}_k, \widehat{u}_k], [\widehat{u}_k, \widehat{v}_k]  \in P_0(\T_k)$ in the definition of $\mu_k$ to verify ${\rm osc}_0(f, \T_k) \le \mu_k \le \eta_k$. In particular for $k=\ell+m+1$, this and the two contributions of $\mu_{\ell+m+1}$ in the last displayed estimate lead to 
\[
2^{-1}   \varepsilon  \widehat{\Lambda}_3^{-1} {\rm osc}_0^2(f, \T_{\ell+m+1}) + 
\left( \varepsilon/4 - 2^{8}{\varepsilon}^{-1}   C_{\rm qo} \widehat{\Lambda}_3 \right) \mu^2_{\ell+m+1} 
\le \left(
\varepsilon \big( 1+2\widehat{\Lambda}_3^{-1} \big)/4-   2^8 \varepsilon^{-1} C_{\rm qo  } \widehat{\Lambda}_3 \right) \mu^2_{\ell+m+1} \le 0 
\]
from \eqref{eqccnewlastversionforvarepsilon} in the last step. Therefore, the second-last displayed estimate leads to
\[
\sum_{k=\ell}^{\ell+m} \delta_{k,k+1}^2 \le 4 e_\ell^2+   \left( 2^8 \varepsilon^{-1} C_{\rm qo  } \widehat{\Lambda}_3 +\varepsilon/4 \right) \mu^2_{\ell} 
+ \varepsilon/2 \sum_{k=\ell+1}^{\ell+m} \left( \widehat{e}_{k}^2/\widehat{\Lambda}_3 + \mu_k^2 \right).
\]
Recall from the above and Corollary~\ref{correliability} that $\widehat{e}_{k}^2 = e_k^2 + 
{\rm osc}_0^2(f, \T_k) \le \Lambda_3 \eta_k^2 + \mu_k^2 \le \widehat{\Lambda}_3 \eta_k^2$ and  $\mu_k \le \eta_k$ to 
deduce that the last displayed sum is at most $\varepsilon \sum_{k=\ell+1}^{\ell+m} \eta_k^2$.
This concludes the proof of  ${\bf (A4)}_{\varepsilon}$ with 
$ \Lambda_{4(\varepsilon)}:= 4\Lambda_3+2^{8}  \varepsilon^{-1}   C_{\rm qo} \widehat{\Lambda}_3$. 
\end{proof}
}

The refinement rules in AMFEM,  {\bf (A1)}-{\bf (A2)},   
and ${\bf (A4)}_{\varepsilon}$ for small $\varepsilon$ imply  {\bf (A4)}.
   
\begin{cor}[Quasiorthogonality]\label{a4}
Given any  $0<\theta <\theta_0:= 1 /(1+\Lstab^2\LdRel)$, there exists 
a positive $\delta_4\le \delta_3 $ such that  $\cT_0 \in \bT(\delta_4)$ implies  {\bf (A4)}.
\end{cor}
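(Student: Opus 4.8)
The plan is to upgrade the perturbed quasiorthogonality ${\bf (A4)}_{\varepsilon}$ of Theorem~\ref{a4e} to the genuine {\bf (A4)} by absorbing the term $\varepsilon\sum_k\eta_k^2$ with an \emph{estimator reduction}, i.e.\ a contraction of the successive estimators up to the distances $\delta_{k,k+1}:=\delta(\cT_k,\cT_{k+1})$. This contraction is the standard by-product of stability {\bf (A1)}, reduction {\bf (A2)}, and the D\"orfler marking in AMFEM; the nonconformity and the quadratic nonlinearity no longer intervene at this stage, as they have already been encoded into $\Lstab,\Lred$ and into $\LqoE$.

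First I would establish the estimator reduction. Since \textsc{Refine} bisects every marked triangle, $\cM_\ell\subseteq\cT_\ell\setminus\cT_{\ell+1}$, so the bulk criterion \eqref{sAfem:eq:bulkA} forces $\theta\,\eta_\ell^2\le\eta_\ell^2(\cM_\ell)\le\eta_\ell^2(\cT_\ell\setminus\cT_{\ell+1})$. Splitting $\eta_{\ell+1}^2=\eta_{\ell+1}^2(\cT_{\ell+1}\cap\cT_\ell)+\eta_{\ell+1}^2(\cT_{\ell+1}\setminus\cT_\ell)$, applying {\bf (A1)} to the first and {\bf (A2)} to the second summand, and using a weighted Young inequality \eqref{eq:weighted} with a free parameter $\kappa>0$, one gets
\[
\eta_{\ell+1}^2\le(1+\kappa)\bigl(1-(1-2^{-1/2})\theta\bigr)\,\eta_\ell^2+C_\kappa\,\delta_{\ell,\ell+1}^2\qquad(\ell\in\bN_0)
\]
with $C_\kappa\lesssim(1+\kappa^{-1})(\Lstab^2+\Lred^2)$. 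For $\kappa$ sufficiently small the prefactor $\rho:=(1+\kappa)(1-(1-2^{-1/2})\theta)$ is $<1$; this works for every $\theta\in(0,1]$, in particular for $0<\theta<\theta_0=1/(1+\Lstab^2\LdRel)$, so no smallness of $\theta$ beyond that of Theorem~\ref{thmOptimalrates} enters. Abbreviate the outcome by $\eta_{\ell+1}^2\le\rho\,\eta_\ell^2+C_{\mathrm{red}}\,\delta_{\ell,\ell+1}^2$.

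Summing this over $k=\ell,\dots,\ell+m$ and rearranging the resulting geometric series yields $\sum_{k=\ell}^{\ell+m}\eta_k^2\le(1-\rho)^{-1}\bigl(\eta_\ell^2+C_{\mathrm{red}}\sum_{k=\ell}^{\ell+m}\delta_{k,k+1}^2\bigr)$ for all $\ell,m\in\bN_0$. Then I would invoke Theorem~\ref{a4e} with $\varepsilon$ chosen so small that $\varepsilon\,C_{\mathrm{red}}/(1-\rho)\le\tfrac12$ (and $\varepsilon$ still meeting the smallness constraints built into Theorem~\ref{a4e}), which supplies some $\delta\le\delta_3$ and $\LqoE$ with \eqref{quasi}; set $\delta_4:=\delta$, so that $\cT_0\in\bT(\delta_4)$ places every $\cT_k$ in $\bT(\delta_4)\subseteq\bT(\delta_0)$, as needed for {\bf (A1)}--{\bf (A2)}. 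Inserting the above bound for $\sum\eta_k^2$ into \eqref{quasi} and absorbing the term $\tfrac12\sum_{k=\ell}^{\ell+m}\delta_{k,k+1}^2$ into the left-hand side leaves $\sum_{k=\ell}^{\ell+m}\delta_{k,k+1}^2\le\bigl(2\LqoE+C_{\mathrm{red}}^{-1}\bigr)\eta_\ell^2$ uniformly in $m$; letting $m\to\infty$ proves convergence of the series and {\bf (A4)} with $\Lqo:=2\LqoE+C_{\mathrm{red}}^{-1}$.

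The summation and absorption steps are routine bookkeeping. The only genuine point is the estimator reduction, and within it the combinatorial fact that the D\"orfler-marked triangles are actually refined, so that $\eta_\ell^2(\cT_\ell\setminus\cT_{\ell+1})\ge\theta\,\eta_\ell^2$ — precisely what the refinement rules of AMFEM guarantee. Beyond that I expect no obstacle: once {\bf (A1)}--{\bf (A3)} and ${\bf (A4)}_\varepsilon$ are in hand, the passage to {\bf (A4)} is purely formal and parallels the general theory in \cite{cc14,cc_hella_18}.
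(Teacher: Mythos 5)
Your argument is correct and follows essentially the same route as the paper, which simply delegates the two ingredients you write out explicitly — the estimator reduction $\eta_{\ell+1}^2\le\varrho_{12}\,\eta_\ell^2+\Lambda_{12}\,\delta_{\ell,\ell+1}^2$ obtained from {\bf (A1)}, {\bf (A2)} and the D\"orfler marking (cited as \cite[Thm.~4.1]{cc_hella_18}) and the absorption of $\varepsilon\sum_k\eta_k^2$ for $\varepsilon<(1-\varrho_{12})/\Lambda_{12}$ (cited as \cite[Thm.~3.1]{cc_hella_18}) — to the references. Your explicit derivation of both steps is sound, including the choice of $\varepsilon$ compatible with Theorem~\ref{a4e} and the resulting $\delta_4\le\delta_3$.
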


\begin{proof}
Given any $ \theta <\theta_0$ in AMFEM,  {\bf (A1)}-{\bf (A2)} and 
\cite[Thm. 4.1]{cc_hella_18} lead to positive parameters
\(\varrho_{12}<1\) and   \(\Lambda_{12}\) in (A12) undisplayed in this paper. Any choice of   
$\varepsilon<  (1-\varrho_{12})/\Lambda_{12}$
leads in Theorem~\ref{a4e} to some  $\delta_4>0$ so that $\cT_0\in\bT(\delta_4)$ implies 
${\bf (A4)}_{\varepsilon}$.  This and \cite[Thm. 3.1]{cc_hella_18}  imply  {\bf (A4)}. 
\end{proof}

\subsection{Proof of Theorem~\ref{thmOptimalratesAMFEM}}%
\label{ProofofTheoremthmOptimalratesAMFEM}
The assertion follows from  {\bf  (A1)}-{\bf  (A4)} verified for 
$\cT_0,\cT,\hT \in \TT{\delta_4}$  with constants $\Lambda_1,\dots,\Lambda_4$,
which depend on $\cT_{\rm init}$ and on the regular solution $\Psi$. 
Hence Theorem~\ref{thmOptimalrates} applies.  

A closer inspection of the proof of Theorem~\ref{thmOptimalrates} through the axioms of adaptivity in \cite{cc14,cc_hella_18} shows that (given the constants $\Lambda_1,\dots\Lambda_4$) 
only one parameter  $C_{\rm BDD}$  
in the Binev-Dahmen-DeVore theorem  \cite{BinevDahmenDevore04} 
on adaptive mesh-refinement  depends on $\cT_0$
 \cite{BinevDahmenDevore04,Stevenson08}. In fact, 
 $C_{\text{BDV}}$  exclusively depends on shape-regularity  defined in 
\cite[Eq (4.1)]{Stevenson08}. Since uniform mesh-refinement generates  $\cT_0$ in AMFEM, 
 $C_{\text{BDV}}$ exclusively depends on  $\cT_{\rm init}$. 

The abstract analysis in \cite{cc14,cc_hella_18} is rather explicit in the constants and implies
that the equivalence constants in  \eqref{eq:optim} are independent of $\delta$ and $\cT_0$ and exclusively depend on   $\cT_{\rm init}$, $\Lambda_1,\dots\Lambda_4$, and (in a mild way) on $s>0$.
This concludes the proof of Theorem~\ref{thmOptimalratesAMFEM}. \qed

\bibliographystyle{amsplain} 

\section*{Acknowledgements}
The research of the first author has been supported by the Deutsche Forschungsgemeinschaft in the Priority Program 1748 under the project "foundation and application of generalized mixed FEM towards nonlinear problems
in solid mechanics" (CA 151/22-2).  The second author thanks the hospitality of the Humboldt-Universit\"at zu Berlin in August 2017 when this work was initiated. The finalization of this paper has been supported by   DST SERB MATRICS grant MTR/2017/000199 and SPARC project 
(id 235) entitled {\it the mathematics and computation of plates}.

\bibliography{refs_new}
\end{document}